\newtheorem{thm}{Theorem}[section]
\newtheorem{prop}[thm]{Proposition}
\newtheorem{lem}[thm]{Lemma}
\newtheorem{cor}[thm]{Corollary}
\newtheorem{rem}[thm]{Remark}
\newtheorem{conj}[thm]{Conjecture}
\numberwithin{equation}{section}
\def\fa{{\mathfrak a}}
\def\fb{{\mathfrak b}}
\def\ff{{\mathfrak f}}
\def\fm{{\mathfrak m}}
\def\fn{{\mathfrak n}}
\def\fo{{\mathfrak o}}
\def\fp{{\mathfrak p}}
\def\fc{{\mathfrak c}}
\def\fP{{\mathfrak P}}
\def\GL{{\operatorname {GL}}}
\def\SL{{\operatorname{SL}}}
\def\SO{{\operatorname{SO}}}
\def\U{{\operatorname {U}}}
\def\PGL{{\operatorname{PGL}}}
\def\Re{{\operatorname {Re}}}
\def\nr{{\operatorname{N}}}
\def\sgn{{\operatorname {sgn}}}
\def\leq{\leqslant}
\def\geq{\geqslant}
\def\bsl{\backslash}
\def\le{\leq}
\def\ge{\geq}
\def\JJ{\mathbb{J}}
\def\1{{\bold 1}}
\renewcommand{\a}{\alpha}
\renewcommand{\b}{\beta}
\newcommand{\e}{\epsilon}
\renewcommand{\l}{\lambda}
\newcommand{\s}{\sigma}
\newcommand{\ga}{{\mathfrak{a}}}
\newcommand{\gb}{{\mathfrak{b}}}
\newcommand{\gc}{{\mathfrak{c}}}
\newcommand{\gf}{{\mathfrak{f}}}
\newcommand{\gn}{{\mathfrak{n}}}
\newcommand{\go}{{\mathfrak{o}}}
\newcommand{\gp}{{\mathfrak{p}}}
\newcommand{\gq}{{\mathfrak{q}}}
\newcommand{\Acal}{{\mathcal A}}
\newcommand{\Bcal}{{\mathcal B}}
\newcommand{\Ecal}{{\mathcal E}}
\newcommand{\Fcal}{{\mathcal F}}
\newcommand{\Ical}{{\mathcal I}}
\newcommand{\Ocal}{{\mathcal O}}
\newcommand{\Scal}{{\mathcal S}}
\renewcommand{\AA}{\mathbb{A}}
\newcommand{\CC}{\mathbb{C}}
\newcommand{\II}{\mathbb{I}}
\newcommand{\LL}{\mathbb{L}}
\newcommand{\NN}{\mathbb{N}}
\newcommand{\QQ}{\mathbb{Q}}
\newcommand{\RR}{\mathbb{R}}
\newcommand{\ZZ}{\mathbb{Z}}
\newcommand{\bfc}{{\mathbf c}}
\newcommand{\bfs}{{\mathbf s}}
\newcommand{\bfx}{{\mathbf x}}
\newcommand{\bfB}{{\mathbf B}}
\newcommand{\bfK}{{\mathbf K}}
\newcommand{\notdivide}{\nmid}
\newcommand{\ord}{\operatorname{ord}}
\newcommand{\Sp}{\operatorname{Sp}}
\newcommand{\fin}{{\rm fin}}
\renewcommand{\Re}{\operatorname{Re}}
\newcommand{\Res}{\operatorname{Res}}
\newcommand{\Sym}{\operatorname{Sym}}
\newcommand{\Nice}{\operatorname{Nice}}
\title{Low-lying zeros of symmetric power $L$-functions
weighted by symmetric square $L$-values}
\author{Shingo Sugiyama}
\address{Department of Mathematics, College of Science and Technology, Nihon University, Suruga-Dai, Kanda, Chiyoda, Tokyo 101-8308, Japan}
\email{sugiyama.shingo@nihon-u.ac.jp}
\subjclass[2020]{Primary 11M50; Secondary 11F66, 11F67, 11F72, 11M41.}
\keywords{Low-lying zeros, one-level density, symmetric power $L$-functions, Jacquet-Zagier type trace formulas.}
\begin{document}

\begin{abstract}
		For a totally real number field $F$ and its ad\`ele ring $\AA_F$,
		let $\pi$ vary in the set of irreducible cuspidal automorphic representations
		of $\PGL_2(\AA_F)$ corresponding to primitive Hilbert modular forms of a fixed weight.
		Then, we determine the symmetry type of the one-level density of low-lying zeros of the symmetric power $L$-functions $L(s,\Sym^r(\pi))$ weighted by special values of symmetric square $L$-functions $L(\frac{z+1}{2},\Sym^2(\pi))$ at $z \in [0, 1]$ in the level aspect.
		If $0 < z \le 1$, our weighted density in the level aspect has the same symmetry type as Ricotta and Royer's density of low-lying zeros of symmetric power $L$-functions for $F=\QQ$ with harmonic weight.
		Hence our result is regarded as a $z$-interpolation of Ricotta and Royer's result.
If $z=0$, density of low-lying zeros weighted by central values is a different type
only when $r=2$,
and it does not appear in random matrix theory as Katz and Sarnak predicted.
Moreover, we propose a conjecture on weighted density of low-lying zeros of $L$-functions
by special $L$-values.

In the latter part, Appendices \ref{Comparison of the ST trace formula with Zagier's formula}, \ref{Comparison of the ST trace formula with Takase's formula} and \ref{Comparison of the ST trace formula with Mizumoto's formula} are dedicated to the comparison among several generalizations of Zagier's parameterized trace formula.
We prove that the explicit Jacquet-Zagier type trace formula (the ST trace formula) by Tsuzuki and the author
recovers all of Zagier's, Takase's and Mizumoto's formulas by specializing several data.
Such comparison is not so straightforward and includes non-trivial analytic evaluations.
\end{abstract}

\maketitle

\section{Introduction}

To study zeros of $L$-functions is one of principal problems in number theory
as was originated by the Riemann hypothesis.
However, as of now, investigation of individual $L$-functions is still far from completion.
Instead of individual $L$-functions,
a family of $L$-functions is more tractable.
Concerning zeros of $L$-functions in a family,
Katz and Sarnak \cite{KS1}, \cite{KS2} suggested a philosophy
called {\it the Density Conjecture}
that
the distribution of low-lying zeros of a family of $L$-functions
should have a density function with symmetry type arising in random matrix theory.
Due to their philosophy, it is expected that
the following five density functions should describe density of low-lying zeros of $L$-functions in a family:
\[W(\Sp)(x)=1-\frac{\sin 2\pi x}{2\pi x},\]
\[W({\rm O})(x) = 1+\frac{1}{2}\delta_0(x),\]
\[W(\SO({\rm even}))(x) = 1+\frac{\sin 2\pi x}{2\pi x},\]
\[W(\SO({\rm odd}))(x) = 1-\frac{\sin 2\pi x}{2\pi x} +\delta_0(x),\]
\[W(\U)(x)=1,\]
where $\delta_0$ is the Dirac delta distribution supported at $0$.
The philosophy was derived from their work \cite{KS1} and \cite{KS2} on funtion field cases,
which study statistics of zeros for a geometric family of zeta functions for a function field over a finite field.

Later, Iwaniec, Luo and Sarnak \cite{ILS} gave densities of low-lying zeros of the standard $L$-functions and those of symmetric square $L$-functions associated with holomorphic elliptic cusp forms both in the weight aspect and the level aspect, assuming GRH of several $L$-functions.
Inspired by their study, densities of low-lying zeros of families of automorphic $L$-functions have been investigated in several settings such as Hilbert modular forms (\cite{LiuMiller}),
Siegel modular forms of degree $2$ (\cite{KWY1}, \cite{KWY2}),
and Hecke-Maass forms (\cite{AlpogeAILMZ}, \cite{AlpogeMiller} \cite{GoldfeldKontorovich},
\cite{LiuQi}, \cite{MatzTemplier}, \cite{RamacherWakatsuki}).
As of now, the broadest setting for low-lying zeros
of automorphic $L$-functions was investigated 
by Shin and Templier \cite{ShinTemplier}.
They treated both the weight aspect and the level aspect of low-lying zeros
of automorphic $L$-functions $L(s,\pi, r)$, where $\pi$ varies in discrete
automorphic representations of $G(\AA_F)$ where $G$ is a connected reductive group over a number field $F$ such that $G$ admits discrete series representations at all archimedean places, and $r : {}^L G \rightarrow \GL_d(\CC)$ is an irreducible $L$-morphism
under the hypothesis on the Langlands functoriality principle for $r$.
Before their study,
G\"ulo\u{g}lu \cite{Guloglu} and Ricotta and Royer \cite{RicottaRoyer}
had considered the symmetric tensor representation $\Sym^r :{}^L\PGL_2 \rightarrow \GL_{r+1}(\CC)$ for $r \in \NN$ as a functorial lifting
and gave densities of low-lying zeros of the symmetric power $L$-functions $L(s, \Sym^r(f))$
attached to holomorphic elliptic cusp forms $f$ in the weight aspect \cite{Guloglu} with GRH and in the level aspect \cite{RicottaRoyer} without GRH, respectively,
under the hypothesis on analytic properties of $L(s, \Sym^r(f))$.

Recently, Knightly and Reno \cite{KnightlyReno} studied density of low-lying zeros of the standard $L$-functions $L(s,f)$ attached to holomorphic elliptic cusp forms $f$ weighted by central values $L(1/2,f)$, and found
the change of the symmetry type of the density
from $W({\rm O})$ in the usual setting to $W({\rm Sp})$ in the weighted setting.
Their study was inspired by
the works of Kowalski, Saha and Tsimerman \cite{KowalskiST}
and of Dickson \cite{Dickson} on the asymptotic formula of the average of spinor $L$-functions $L(s, f, {\rm Spin})$
attached to holomorphic Siegel cusp forms $f$ of degree $2$ weighed by the square of the Fourier coefficient of $f$ at the unit matrix.
In \cite{KowalskiST} and \cite{Dickson}, they observed a phenomenon of changing the symmetry type 
from $W({\rm O})$ to $W({\rm Sp})$, which they considered as a weak evidence toward B\"ocherer's conjecture which was not proved at that time (Now this conjecture is known to be true by Furusawa and Morimoto \cite[Theorem 2]{FurusawaMorimoto}).
Indeed, they considered low-lying zeros of $L(s,f, {\rm Spin})$
attached to holomorphic Siegel cusp forms $f$ on ${\rm GSp}_4$
weighted by the Fourier coefficient of each $f$ at the unit matrix (a Bessel period of $f$),
where we remark that
B\"ocherer's conjecture shows that
the Bessel period of $f$ used there is identical to $L(1/2, f, {\rm Spin}) L(1/2, f\otimes\chi_{-4}, {\rm Spin})$,
where $\chi_{-4}$ is the quadratic Dirichlet character modulo $4$.

In this article, in order to observe phenomena of changes of densities by special $L$-values in other settings,
we consider low-lying zeros of the symmetric power $L$-functions $L(s, \Sym^r(\pi))$
associated with cuspidal representations $\pi$ of $\PGL_2(\AA_F)$
corresponding to Hilbert cusp forms over a totally real number field $F$ in the level aspect without GRH,
where our weight factors are
special values of symmetric square $L$-functions $L(\frac{z+1}{2}, \Sym^2(\pi))$
at each $z \in [0, 1]$.

\subsection{Density of low-lying zeros weighted by symmetric square $L$-functions}
\label{Densities weighted by special values of symmetric square $L$-functions}
We prepare some notions.
Let $F$ be a totally real number field with $d_F=[F:\QQ]<\infty$ and $\AA=\AA_F$ the ad\`ele ring of $F$.
Let $\Sigma_\infty$ denote the set of the archimedean places of $F$.
Let $l=(l_v)_{v \in \Sigma_\infty}$ be a family of positive even integers
and let $\gq$ be a non-zero prime ideal of the integer ring $\go$ of $F$.
We denote by $\Pi_{\rm cus}^*(l, \gq)$ the set of all irreducible cuspidal automorphic representations $\pi=\otimes_v'\pi_v$ of $\PGL_2(\AA)$ such that
the conductor of $\pi$ equals $\gq$ and $\pi_v$ for each $v\in \Sigma_\infty$
is isomorphic to the discrete series representation of $\PGL_2(\RR)$ with minimal ${\rm O}(2)$-type $l_v$.
For any $r \in \NN$, we treat in this article the symmetric power $L$-functions $L(s, \Sym^r(\pi))$ for $\pi \in \Pi_{\rm cus}^*(l,\gq)$ explained in \S \ref{Symmetric power $L$-functions} (see also \cite{CogdellMichel} and \cite{RicottaRoyer}), and consider
the hypothesis $\Nice(\pi,r)$ consisting of the following:
\begin{itemize}
	
	\item $L(s, \Sym^r(\pi))$ is continued to an entire function on $\CC$
	of order $1$.
	
	\item $L(s,\Sym^r(\pi))$ satisfies the functional equation
	\[L(s,\Sym^r(\pi))=\e_{\pi,r}(D_F^{r+1} \nr(\gq)^r)^{1/2-s}L(1-s,\Sym^{r}(\pi)),\]
	where $D_F$ is the absoulte value of the discriminant of $F/\QQ$,
	$\nr(\gq)$ is the absolute norm of $\gq$, and
	$\e_{\pi,r}\in \{\pm 1\}$.

\end{itemize}
This hypothesis is expected to be true in the view point of the Langlands functoriality principle.
In the case $r=1$, this hypothesis is well-known to be true.
Since any $\pi \in \Pi_{\rm cus}^*(l,\gq)$ has non-CM,
the hypothesis $\Nice(\pi,r)$ is known for $r=2$ by Gelbart and Jacquet
\cite[(9.3) Theorem]{GelbartJacquet},
$r=3$ by Kim and Shahidi \cite[Corollary 6.4]{KimShahidi} and $r=4$ by Kim \cite[Theorem B and \S7.2]{Kim}.
Recently,
$\Nice(\pi,r)$ was proved for all $r\in \NN$, all $\pi \in \Pi_{\rm cus}^*(l,\gq)$ and all non-zero prime ideals $\gq$ if we restrict our case to elliptic modular forms ($F=\QQ$).
Indeed, $\Sym^r(\pi)$ is an irreducible $C$-algebraic cuspidal automorphic representation
of $\GL_{r+1}(\AA_\QQ)$ by Newton and Thorne \cite{NewtonThorne} which is applied to
the case where the conductor of $\pi$ is square-free.
They also treated in \cite{NewtonThorne2} the case of general levels of elliptic modular forms.
We survey known results on meromorphy of $L(s, \Sym^r(\pi))$ and on
automorphy of $\Sym^r(\pi)$ in \S \ref{Symmetric power $L$-functions}.

Throughout this article, we fix $F$ and $l$, and assume $\Nice(\pi,r)$ for all non-zero prime ideals $\gq$ and all $\pi \in \Pi_{\rm cus}^*(l,\gq)$.
In what follows, we consider distributions of low-lying zeros of $L(s,\Sym^r(\pi))$
for $\pi \in \Pi_{\rm cus}^*(l,\gq)$ with $\nr(\gq)\rightarrow \infty$
without assuming GRH of any $L$-functions.
The one-level density of such low-lying zeros is defined as
\begin{align}\label{def one-level density}
D(\Sym^r(\pi),\phi)=\sum_{\rho=1/2+i\gamma} \phi\left(\frac{\log Q(\Sym^r(\pi))}{2\pi}\gamma\right)\end{align}
for any Paley-Wiener functions $\phi$, where $\rho=1/2+i\gamma$ ($\gamma \in \CC$) runs over all zeros of $L(s,\Sym^r(\pi))$ counted with multiplicity, and $Q(\Sym^r(\pi))$ is the analytic conductor of $\Sym^r(\pi)$.
Here a Payley-Wiener function is given by
a Schwartz function $\phi$ on $\RR$
such that the Fourier transform
\[\hat\phi(\xi)=\int_{\RR}\phi(x)e^{-2\pi i x\xi}dx\]
of $\phi$ has a compact support. By the compactness, $\phi$ is extended to an entire function on $\CC$.
By the symmetry of zeros of $L(s, \Sym^r(\pi))$, we may assume that $\phi$ is even.

Before stating our results, we introduce Ricotta and Royer's work \cite{RicottaRoyer} on elliptic modular forms.
When $F=\QQ$, $\Pi_{\rm cus}^*(l,q\ZZ)$ for an even positive integer $l$ and a prime number $q$ is identified with the set $H_{l}^*(q)$ of normalized new Hecke eigenforms in the space $S_l(\Gamma_0(q))^{\rm new}$ of elliptic cuspidal new forms of weight $l$ and level $q$ with trivial nebentypus.
Set
\[\omega_q(f) = \frac{\Gamma(l-1)}{(4\pi)^{l-1}\|f\|^2},\]
where $\|f\|$ denotes the Petersson norm of $f$ as in \cite[\S2.1.1]{RicottaRoyer}.
We denote by $\e_{f,r}$ the sign of the functional equation of $L(s,\Sym^r(f))$ for $f \in H_l^*(q)$.

Then, Ricotta and Royer \cite{RicottaRoyer} proved the following in the level aspect
without GRH.
\begin{thm} \cite[Theorems A and B]{RicottaRoyer} \label{RR-AB}
	Let $r$ be any positive integer.
Let $l\ge 2$ be a positive even integer and let $q$ vary in the set of prime numbers.
Assume $\Nice(f,r)$ for all $q$ and all $f \in H_{l}^*(q)$.
Let $\phi$ be an even Schwartz function on $\RR$.
Set
\[\b_1=\left(1-\frac{1}{2(l-2\theta)}\right)\frac{2}{r^2},\]
where $\theta \in [0,1/2)$ is the exponent toward the generalized Ramanujan-Petersson conjecture for $\GL_2$ {\rm (}cf. \cite[\S3.1]{RicottaRoyer}{\rm )}.
If ${\rm supp}(\hat\phi)\subset (-\b_1,\b_1)$, then we have
\[
\lim_{q\rightarrow \infty}\sum_{f \in H_{l}^*(q)}\omega_q(f)D(\Sym^r(f),\phi)
= \begin{cases}\int_\RR \phi(x)W({\rm Sp})(x)dx & (r \text{ is even}),\\
\int_\RR \phi(x)W({\rm O})(x)dx & (r \text{ is odd}).
\end{cases}\]
\end{thm}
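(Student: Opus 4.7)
The plan is to follow the template of Iwaniec--Luo--Sarnak, as adapted to symmetric powers by G\"ulo\u{g}lu and Ricotta--Royer: apply the Riemann--Weil explicit formula to translate $D(\Sym^r(f),\phi)$ into a sum over prime powers, perform the harmonic average via the Petersson trace formula for $S_l^{\rm new}(\Gamma_0(q))$, extract the diagonal contribution by a short Chebyshev computation, and bound the resulting off-diagonal Kloosterman--Bessel contribution under the calibrated support restriction on $\widehat\phi$. Concretely, under $\Nice(f,r)$ the explicit formula reads, for any even Paley--Wiener $\phi$,
\begin{align*}
D(\Sym^r(f),\phi) = \widehat\phi(0) + O_\phi\!\left(\tfrac{1}{\log Q(\Sym^r(f))}\right) - \frac{2}{\log Q(\Sym^r(f))}\sum_{p,\,k\ge 1}\frac{a_{\Sym^r(f)}(p^k)\log p}{p^{k/2}}\widehat\phi\!\left(\frac{k\log p}{\log Q(\Sym^r(f))}\right),
\end{align*}
where $\log Q(\Sym^r(f))\sim r\log q$ and, writing $\alpha_{f,p}=e^{i\theta_{f,p}}$ via Deligne's bound, the log-derivative coefficient is $a_{\Sym^r(f)}(p^k)=U_r(\cos k\theta_{f,p})$ with $U_r$ the Chebyshev polynomial of the second kind.

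Next I would expand $a_{\Sym^r(f)}(p^k)$ as a $\ZZ$-linear combination of Hecke eigenvalues $\lambda_f(p^j)=U_j(\cos\theta_{f,p})$, so that the Petersson trace formula (which for $(m,q)=1$ gives $\sum_f\omega_q(f)\lambda_f(m)=\delta_{m,1}+\text{(Kloosterman)}$) can be applied term by term. The Petersson diagonal then picks out exactly the coefficient of $\lambda_f(1)$: a short Chebyshev computation gives $0$ at $k=1$ (since $a_{\Sym^r(f)}(p)=\lambda_f(p^r)$ contains no $\lambda_f(1)$-term), and $(-1)^r$ at $k=2$ via the identity $U_r(\cos 2\theta)=\sum_{j=0}^{r}(-1)^{r-j}U_{2j}(\cos\theta)$. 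A standard prime-number-theorem evaluation $\sum_p(\log p/p)\widehat\phi(2\log p/\log Q)\sim \tfrac14\log Q\cdot\phi(0)$ then turns the $k=2$ diagonal into $-(-1)^r\phi(0)/2$, which is $-\phi(0)/2$ for even $r$ and $+\phi(0)/2$ for odd $r$; the $k\ge 3$ contributions are $o(1)$ by trivial bounds. Combined with the $\widehat\phi(0)$ main term this produces exactly $\widehat\phi(0)-\phi(0)/2=\int\phi\,W(\Sp)$ for even $r$ and $\widehat\phi(0)+\phi(0)/2=\int\phi\,W({\rm O})$ for odd $r$; note that the $\tfrac12\delta_0(x)$-mass in $W({\rm O})$, which on the zero side corresponds to the $\phi(0)$-contribution of the forced central zero when $\epsilon_{f,r}=-1$, emerges on the prime side directly from the sign $(-1)^r$ in the Chebyshev coefficient, so no separate analysis of the distribution of root numbers is required.

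The main obstacle is controlling the off-diagonal contribution. After the Petersson substitution one confronts, for each pair $(p,k)$ within the support of $\widehat\phi$, Kloosterman--Bessel sums of the form $\sum_{c\equiv 0\pmod q}c^{-1}S(1,p^k;c)J_{l-1}(4\pi p^{k/2}/c)$, together with a technical correction coming from the projection onto newforms in which the Ramanujan--Petersson exponent $\theta\in[0,1/2)$ enters. Using Weil's bound $|S(1,n;c)|\le\tau(c)(n,c)^{1/2}c^{1/2}$ and the Bessel estimate $J_{l-1}(x)\ll\min(x^{l-1},x^{-1/2})$, one reduces to requiring $p^k\ll Q(\Sym^r(f))^{\,1-\frac{1}{l-2\theta}}$; the prescribed support $\mathrm{supp}(\widehat\phi)\subset(-\beta_1,\beta_1)$ with $\beta_1=\bigl(1-\tfrac{1}{2(l-2\theta)}\bigr)\tfrac{2}{r^2}$ is precisely calibrated, the factor $2/r^2$ reflecting both $\log Q(\Sym^r(f))\asymp r\log q$ and that the decisive admissible range comes from the $k=2$ prime sum, so that the full off-diagonal is $o(1)$ as $q\to\infty$. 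Assembling the diagonal and off-diagonal analyses then yields the claimed limit.
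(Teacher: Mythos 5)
This statement is not proved in the paper at all: it is quoted verbatim from Ricotta and Royer \cite{RicottaRoyer} as background, and your sketch follows essentially the same route as their original proof (explicit formula, Chebyshev expansion of $U_r(\cos k\theta_{f,p})$ in Hecke eigenvalues, harmonic-weight Petersson formula, diagonal extraction giving $\mp\phi(0)/2$ from the $k=2$ terms, Weil--Bessel bounds for the off-diagonal under the stated support condition), which is also the template the present paper adapts in \S\ref{Weighted distributions of low-lying zeros}. Your diagonal computation and calibration of $\beta_1$ check out; the only quibble is that the $k=1$ sum (index $p^r$ with $p\le Q^{\beta_1}$) imposes the same constraint as the $k=2$ sum, so it is not the $k=2$ range alone that is decisive.
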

Here the weight factor $\omega_q(f)$ is called the harmonic weight,
and would be removable and negligible (cf.\ \cite[Lemma 2.18]{Guloglu} under GRH).
As an analogous result to Ricotta and Royer as above,
we give the density of low-lying zeros of $L(s,\Sym^r(\pi))$ weighted by special values $L(\frac{z+1}{2}, \Sym^2(\pi))$ of symmetric square $L$-functions
at each $z \in [0,1]$.
Now we review the ensemble of the special values $\{L(\frac{z+1}{2}, \Sym^2(\pi))\}_{\pi \in \Pi_{\rm cus}^*(l, \gq)}$.
The value $L(\frac{z+1}{2}, \Sym^2(\pi))$ is believed to be non-negative due to GRH.
Although the non-negativity is still open, it is supported by the asymptotics
\[\sum_{\pi \in \Pi_{\rm cus}^*(l,\gq)}\frac{L(\frac{z+1}{2},\Sym^2(\pi))}{L(1,\Sym^2(\pi))} \sim C\,\nr(\gq)(\log \nr(\gq))^{\delta(z=0)},\qquad \nr(\gq)\rightarrow \infty\]
for some explicit constant $C>0$, which follows from the proof of Tsuzuki and the author's result \cite[Theorem 1.3]{SugiyamaTsuzuki2018} (see also Theorem \ref{quantitative ver of equidist}).
In particular, this average is non-zero for any non-zero prime ideals $\gq$ of $\go$ such that $\nr(\gq)$ is sufficiently large.
Our main result in the level aspect without GRH is stated as follows.
\begin{thm}\label{main: weighted one level density for z}
	We assume that the prime $2\in \QQ$ is completely splitting in $F$.
		Suppose that $l \in 2\NN^{\Sigma_\infty}$ satisfies $\underline{l} := \min_{v \in \Sigma_\infty}l_v \ge 6$.
		Let $\gq$ vary in the set of non-zero prime ideals of $\go$.
	For $r \in \NN$, we assume $\Nice(f,r)$ for all $\gq$ and all $\pi\in \Pi_{\rm cus}^*(l, \gq)$.
	 For any $z \in [0,1]$, define $\beta_2>0$ by
	\[\beta_2 = \frac{1}{r(r\frac{\underline{l}-3-z+2d_F}{2d_F}+\frac{1}{2})}
	\times \begin{cases}
		\frac{1}{2} & (\underline{l}\ge d_F+4), \\
		\frac{\underline{l}-3-z}{2d_F} & (6\le \underline{l}\le d_F+3).
	\end{cases}\]
	Then, for any even Schwartz function $\phi$ on $\RR$
	with ${\rm supp}(\hat\phi) \subset (-\beta_2, \beta_2)$,
we have
	\begin{align*}&\lim_{\nr(\gq) \rightarrow \infty}
	\frac{1}{\sum_{\pi \in \Pi_{\rm cus}^*(l,\gq)}\frac{L(\frac{z+1}{2},\Sym^2(\pi))}{L(1,\Sym^2(\pi))}}\sum_{\pi \in \Pi_{\rm cus}^*(l,\gq)}\frac{L(\frac{z+1}{2}, \Sym^2(\pi))}{L(1, \Sym^2(\pi))}D(\Sym^r(\pi), \phi) \\
	= &
	\begin{cases} \hat\phi(0)-\frac{1}{2}\phi(0)=\int_{\RR} \phi(x)W({\rm Sp})(x)dx & (\text{$r$ is even and $(r,z)\neq (2,0)$}), \\
	\hat\phi(0)+\frac{1}{2}\phi(0)=\int_{\RR} \phi(x)W({\rm O})(x)dx & (\text{$r$ is odd}),\\
	\hat\phi(0)-\frac{3}{2}\phi(0) +2\int_{\RR}\hat\phi(x)|x|dx & (\text{$r=2$ and $z=0$}).
	\end{cases}
	\end{align*}
\end{thm}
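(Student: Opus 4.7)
The plan is to follow the Iwaniec--Luo--Sarnak explicit-formula strategy, combined with the author--Tsuzuki weighted equidistribution (the ST trace formula, used quantitatively via Theorem~\ref{quantitative ver of equidist}). Under $\Nice(\pi, r)$, the Riemann--Weil explicit formula gives, for an even Paley--Wiener $\phi$,
\[ D(\Sym^r(\pi), \phi) = \hat\phi(0) + \tfrac{\e_{\pi, r}}{2}\phi(0) + o(1) - P_r(\pi; \phi), \]
where
\[ P_r(\pi; \phi) = \tfrac{2}{\log Q(\Sym^r(\pi))} \sum_{\gp}\sum_{k\ge 1} \tfrac{\log\nr(\gp)}{\nr(\gp)^{k/2}}\, a_{\Sym^r(\pi)}(\gp^k)\,\hat\phi\!\left(\tfrac{k\log\nr(\gp)}{\log Q(\Sym^r(\pi))}\right), \]
the $o(1)$ comes from the archimedean gamma-factor contribution as $\nr(\gq) \to \infty$, and the support condition $\operatorname{supp}\hat\phi \subset (-\beta_2, \beta_2)$ truncates the sum to $\nr(\gp)^k \le Q(\Sym^r(\pi))^{\beta_2}$. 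Multiplying by $w_z(\pi) := L(\tfrac{z+1}{2}, \Sym^2(\pi))/L(1, \Sym^2(\pi))$, summing over $\pi \in \Pi_{\rm cus}^*(l, \gq)$, and normalising by $W_z(\gq) := \sum_\pi w_z(\pi)$, the problem reduces to asymptotics of the weighted averages of $\e_{\pi, r}$ and of $a_{\Sym^r(\pi)}(\gp^k)$ for each admissible $\gp, k$.

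The ST trace formula computes these weighted averages. At each unramified finite place $v$, one chooses the local test function on $\PGL_2(F_v)$ whose Satake transform on an unramified $\pi_v$ is $a_{\Sym^r(\pi)}(\gp_v^k)$; Theorem~\ref{quantitative ver of equidist} then expresses $\frac{1}{W_z(\gq)}\sum_\pi w_z(\pi)\, a_{\Sym^r(\pi)}(\gp^k)$ as a Sato--Tate integral against the Plancherel measure on $\PGL_2(F_v)$, plus a power-saving error polynomial in $\nr(\gp)^k$. Schur orthogonality on $\mathrm{SU}(2)$ shows that the Sato--Tate integral is non-zero only for certain $(r, k)$, corresponding to the pole at $s = 1$ of the Rankin--Selberg $L(s, \Sym^r\pi \times \Sym^r\pi)$; the subsequent Mertens-type summation yields $\pm\tfrac{1}{2}\phi(0)$, the sign determined by the parity of $r$. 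Combined with the weighted average of the root number---equal to $+1$ for even $r$ and averaging to $0$ for odd $r$ by a standard local computation---one obtains $\hat\phi(0) + \tfrac{1}{2}\phi(0) = \int \phi(x) W(\mathrm{O})(x)\,dx$ for odd $r$ and $\hat\phi(0) - \tfrac{1}{2}\phi(0) = \int \phi(x) W(\mathrm{Sp})(x)\,dx$ for even $r$ with $z > 0$, matching the first two cases of the theorem.

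At $(r, z) = (2, 0)$, the weight $L(\tfrac{1}{2}, \Sym^2(\pi))$ is the central value of precisely the $L$-function whose zeros are being sampled. Plugging the approximate functional equation of the weight into the weighted prime sum $P_2(\pi; \phi)$ and applying the ST trace formula term by term produces a generating Dirichlet series with a \emph{double} pole at $s = 1$: the simple pole of $L(s, \Sym^2\pi \times \Sym^2\pi)$ is boosted by the additional $L(\tfrac{1}{2}, \Sym^2(\pi))$-factor carried by the weight. The corresponding Tauberian analysis yields, via Parseval, the anomalous $2\int_\RR \hat\phi(x)|x|\,dx$ and shifts the coefficient of $\phi(0)$ from $-\tfrac{1}{2}$ to $-\tfrac{3}{2}$. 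The explicit value of $\beta_2$ is dictated by the polynomial growth of the geometric side of the ST trace formula (unipotent and elliptic orbital integrals) in $\nr(\gp)^k$ and in $\prod_v l_v$; the assumptions $\underline{l} \ge 6$ and the complete splitting of $2$ in $F$ ensure convergence and uniformity of those bounds, as in \cite{SugiyamaTsuzuki2018}. The main obstacle is precisely this joint uniformity of the geometric side over $\gq$, $\gp$, and $k$, paired with the careful extraction of the second-order residue in the anomalous case $(r,z) = (2,0)$ while keeping the prime-sum truncation at $Q(\Sym^r(\pi))^{\beta_2}$.
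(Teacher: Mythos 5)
Your skeleton --- Weil explicit formula for $D(\Sym^r(\pi),\phi)$, weighted averages of Hecke eigenvalues computed from the ST trace formula via Theorem \ref{quantitative ver of equidist}, and a separate analysis at $(r,z)=(2,0)$ --- is the same as the paper's. But the central bookkeeping of the constant $\pm\frac{1}{2}\phi(0)$ is wrong. In the explicit formula actually used (as in Ricotta--Royer), the constant is the deterministic term $\frac{(-1)^{r+1}}{2}\phi(0)$, present for every single $\pi$; it arises from the trivial component of the coefficients at prime squares, and no average of the root number $\e_{\pi,r}$ is taken anywhere. Indeed, with $\mathrm{supp}(\hat\phi)\subset(-\beta_2,\beta_2)\subset[-1,1]$ the one-level density cannot detect root-number statistics at all (the paper notes it cannot even separate ${\rm O}$ from $\SO({\rm even})$/$\SO({\rm odd})$). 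Your version, with constant $\frac{\e_{\pi,r}}{2}\phi(0)$ plus a prime-square contribution of $\pm\frac{1}{2}\phi(0)$ by parity, is internally inconsistent: for even $r$ you have $\e_{\pi,r}=+1$, so the two sources would give $+\frac{1}{2}\phi(0)-\frac{1}{2}\phi(0)=0$ rather than the required $-\frac{1}{2}\phi(0)$, and for odd $r$ the claimed averaging of $\e_{\pi,r}$ to $0$ is neither available from this weighted family nor needed. A second misstatement: the weighted averages $\Ecal_z(\lambda_\pi(\gp_v^{n}))$ are not governed by Sato--Tate/Schur orthogonality; the weight $L(\frac{z+1}{2},\Sym^2(\pi))/L(1,\Sym^2(\pi))$ deforms the limit measure so that every even power has a nonzero average $\asymp q_v^{-n(z+1)/4}$ (Corollary \ref{equidist of lambda new part}). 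The actual content of the proof is that for $z>0$ these nonzero averages still sum to $O(1/\log Q_r)$ over primes, while at $(r,z)=(2,0)$ the \emph{first-power} sum $\sum_v \Ecal_0(\lambda_\pi(\gp_v^2))\log q_v\, q_v^{-1/2}\hat\phi(\cdot)$ becomes barely divergent --- the anomaly does not come from the prime-square terms.

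For the case $(r,z)=(2,0)$, your ``approximate functional equation of the weight producing a double pole'' is a heuristic, not an argument, and it is not how $2\int_{\RR}\hat\phi(x)|x|\,dx$ and the shift $-\frac{1}{2}\to-\frac{3}{2}$ are obtained. In the paper these come from the secondary term of the quantitative weighted equidistribution at $z=0$ (the $z$-derivative of the unipotent contribution, visible in Corollary \ref{equidist of lambda new part} as the factor $1-\frac{1}{2}\frac{\log\nr(\ga)}{\log\nr(\gq)}\{1+\Ocal(1/\log\nr(\gq))\}$), combined with the two Mertens-type asymptotics $\sum_v \hat\phi\bigl(\frac{\log q_v}{\log Q_2}\bigr)\frac{\log q_v}{q_v\log Q_2}\to\frac{1}{2}\phi(0)$ and $\sum_v \hat\phi\bigl(\frac{\log q_v}{\log Q_2}\bigr)\frac{(\log q_v)^2}{q_v(\log Q_2)^2}\to\frac{1}{2}\int_{\RR}\hat\phi(x)|x|\,dx$. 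You would need to supply this mechanism (or a rigorous substitute), together with the corrected treatment of the constant term, to close the argument; as written, both the parity constants and the anomalous terms are asserted rather than derived, and the derivation sketched would not reproduce them.
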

Here we note that the denominator in the left-hand side above is non-zero as before.
Compared to Theorem \ref{RR-AB}, our Theorem \ref{main: weighted one level density for z} can be interpreted as a $z$-interpolation of Ricotta and Royer's result.
We remark that our assumption $\underline{l} \ge 6$ can be weakened to $\underline{l}\ge 4$, in which the condition $z \in [0,1]$ is replaced with $z\in [0, \min(1,\sigma)]$ for any $\sigma \in (0, \underline{l}-3)$.
This condition on $z$ and $l$ is derived from
the assumption in \cite[Corollary 1.2]{SugiyamaTsuzuki2018}.

There are some remarks in various directions.
Theorem \ref{main: weighted one level density for z} for $z=1$
is a generalization of Theorem \ref{RR-AB} to the case of Hilbert modular forms without harmonic weight.
Although the setting is slightly different, Theorem \ref{main: weighted one level density for z} for $z=1$ is similar to Shin and Templier's result \cite[Example 9.13 and Theorem 11.5]{ShinTemplier}, where they considered the principal congruence subgroup $\Gamma(\gq)$ instead of our level $\Gamma_0(\gq)$, the Hecke congruence subgroup.

Theorem \ref{main: weighted one level density for z} in the case of the standard $L$-functions ($r=1$) for Hilbert modular forms
without weight factor ($z=1$) has overlap with
Liu and Miller \cite{LiuMiller}, in which they assumed GRH and imposed conditions that the narrow class number of $F$ is one, the weight is the parallel weight $2k\ge 4$, and the level is endowed by the congruence subgroup $\Gamma_0(\Ical)$ for a square-free ideal $\Ical$ of $\go$. Their contribution is to extend the size of the support of $\hat\phi$ beyond $(-1,1)$ under the assumption above.

We remark that, when $r$ is odd,
Theorem \ref{main: weighted one level density for z} does not distinguish
$W({\rm O})$, $W(\SO({\rm even}))$ and $W(\SO({\rm odd}))$ because of $(-\beta_2,\beta_2) \subset [-1, 1]$.
Nevertheless we describe the assertion for odd $r$ by $W({\rm O})$
due to Ricotta and Royer's work \cite{RicottaRoyer}.
For determining the symmetry type, we need to calculate two-level density 
of low-lying zeros weighted by symmetric square $L$-functions.
It can be done by generalizing the trace formula in Theorem \ref{JZtrace} (\cite[Corollary 1.2]{SugiyamaTsuzuki2018})
to the case where $S$ and $S(\gn)$ used there have common finite places.

We summarize the change of densities in Theorem \ref{main: weighted one level density for z} when moving $z$ in $[0,1]$.
For any $r\neq 2$, the weighted density ($W(\Sp)$ or $W(\rm O)$) for the family of $L(s, \Sym^r(\pi))$ for $\pi \in \Pi_{\rm cus}^*(l, \gq)$ is stationary as $z$ varies in $[0,1]$.
Contrary to this, the weighted density for $r=2$ is stationary as $z \in (0,1]$
but the symmetry type $W(\rm Sp)$ is broken when $z=0$.
Hence, we conclude that the change of density of low-lying zeros of $L(s, \Sym^r(\pi))$ occurs only when
$r=2$ and the weight factors are essentially central values $L(1/2, \Sym^2(\pi))$.
Our weighted density for $(r, z)= (2, 0)$ is symplectic type plus a term,
which does not come from random matrix theory.
A density function not arising in random matrix theory is also seen for families of $L$-functions attached to elliptic curves in Miller \cite{Miller}.
Therefore, our Theorem \ref{main: weighted one level density for z} gives us a new example of the phenomenon that central $L$-values effect to change of density of low-lying zeros, as seen in Knightly and Reno \cite{KnightlyReno} for $\GL_2$ and Kowalski, Saha and Tsimerman \cite{KowalskiST}, Dickson \cite{Dickson} for ${\rm GSp}_4$.
From these observations, it might be meaningful to suggest the following,
which is not a rigorous form.
\begin{conj}
	Let $\Fcal=\bigcup_{k=1}^\infty \Fcal_k$ be a family of indices of $L$-functions
	{\rm(}e.g., a multiset of irreducible automorphic representations
	of an adelic group such as a harmonic family in the sense of \cite{SarnakShinTemplier}{\rm)}.
	Assume $\#\Fcal_k<\infty$ for each $k\ge1$ and $\lim_{k\rightarrow \infty}\#\Fcal_k=\infty$.
	Further assume existence of density $W(\Fcal)$ for one-level density
	of low-lying zeros of the family of $L$-functions $L(s, \Pi)$, $(\Pi \in \Fcal)$,
	that is,
	\[ \lim_{k\rightarrow \infty}\frac{1}{\#\Fcal_k}
	\sum_{\Pi \in \Fcal_k}D(\Pi,\phi)=\int_{\RR}\phi(x) W(\Fcal)(x)dx \]
	for Paley-Wiener functions $\phi$, where $D(\Pi,\phi)$ is the one-level density such as \eqref{def one-level density}.
	Let $w : \Fcal \rightarrow \CC$ be a function such that $\sum_{\Pi \in \Fcal_k}w_\Pi \neq 0$ for any $k\gg 1$
and	the weighted one-level density is described as
	\[\lim_{k\rightarrow \infty}\frac{1}{\sum_{\Pi \in \Fcal_k}w_\Pi }\sum_{\Pi \in \Fcal_k}w_\Pi\, D(\Pi,\phi) = \int_\RR\phi(x)W_w(\Fcal)(x)dx.\]
	Then, the density $W_w(\Fcal)$ would be changed from $W(\Fcal)$ only when $w_\Pi$ essentially contains the central value $L(1/2, \Pi)$.
\end{conj}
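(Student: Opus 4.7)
Since the conjecture is explicitly stated in a non-rigorous form, the plan is first to pin down a tractable subclass in which it can be tested, and second to reduce the problem to a quantitative statement about how the weight $w_\Pi$ couples to the Satake data at primes whose logarithm lies in the support of $\hat\phi$. Concretely, I would restrict to weights of the form $w_\Pi=\eta_\Pi\prod_{j}L(s_j,\Pi,r_j)^{a_j}$ with $a_j\in \ZZ$, $s_j\in \CC$, and $\eta_\Pi$ an analytic normalization of size $(\log C(\Pi))^{O(1)}$ (for instance $\eta_\Pi=L(1,\Sym^2\Pi)^{-1}$, as in Theorem \ref{main: weighted one level density for z}). I would then declare that $w_\Pi$ \emph{essentially contains} the central value if the analytic conductor of $\prod_j L(s_j,\Pi,r_j)^{a_j}$ read from its Euler product grows like a positive power of $C(\Pi)$ and cannot be reduced to $(\log C(\Pi))^{O(1)}$ by Euler-product identities such as Rankin--Selberg factorizations. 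This rules out trivial cancellations like $L(1/2,\Pi)L(1/2,\Pi)^{-1}$ and includes every example currently known to shift the density.

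For a harmonic family $\Fcal$ and Paley--Wiener $\phi$, the explicit formula yields
\[
D(\Pi,\phi)=\hat\phi(0)\pm \tfrac12\phi(0)-\frac{2}{\log C(\Pi)}\sum_{p,j}\frac{\log p}{p^{j/2}}a_\Pi(p^j)\hat\phi\!\left(\frac{j\log p}{\log C(\Pi)}\right)+o(1),
\]
so the conjecture would follow from showing that
\[
A_k(n):=\frac{1}{\sum_{\Pi\in \Fcal_k}w_\Pi}\sum_{\Pi\in \Fcal_k}w_\Pi\, a_\Pi(n)
\]
converges, for each fixed prime power $n$, to the limit provided by the Sato--Tate/Plancherel measure governing $\Fcal$ in the unweighted case, precisely when $w_\Pi$ does \emph{not} essentially contain a central $L$-value. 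The key technical step is to expand each factor of $w_\Pi$ via its approximate functional equation, producing Dirichlet polynomials of length $\asymp C(\Pi,r_j)^{1/2}$ on the critical line and of length $\ll C(\Pi,r_j)^{1/2-\delta}$ off it. For $\Re(s_j)>1/2$ the effective length is strictly shorter than the arithmetic threshold $C(\Pi)^\beta$, so the trace formula on $\Fcal_k$ factorizes the weighted average through the $n$-variable and the $p^j$-variable independently, reproducing the unweighted density; for $s_j=1/2$ the diagonal $n=p^j$, together with the dual sum weighted by the root number $\e_\Pi$, produces the extra distributional contribution (a $\phi(0)$ multiple, plus in degenerate cases a Plancherel-type term such as the $2\int \hat\phi(x)|x|\,dx$ of Theorem \ref{main: weighted one level density for z}).

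The chief obstacle is that executing this diagonal analysis requires a Petersson/Kuznetsov or Arthur--Selberg trace formula on $\Fcal$ that is uniform in the conductor, together with a handle on the root number statistics $\{\e_\Pi:\Pi\in \Fcal_k\}$; for general symmetric power lifts neither is currently available, which is why Theorem \ref{main: weighted one level density for z} relies on the explicit Jacquet--Zagier formula of Tsuzuki and the author rather than on a generic trace formula. A secondary obstacle is that the very definition of ``essentially contains the central value'' must be robust under trivial algebraic identities, forcing one to work with an Euler-product/analytic-conductor invariant rather than with the written expression for $w_\Pi$. Finally, as the $(r,z)=(2,0)$ case of Theorem \ref{main: weighted one level density for z} already shows, when a density does shift it may do so in a way that matches none of the five Katz--Sarnak random matrix models, so the statement should be read as predicting only \emph{that} a change occurs when a central value is present, not which limiting density is produced.
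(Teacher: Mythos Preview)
The statement you were asked to prove is labeled a \emph{Conjecture} in the paper and is explicitly introduced with the caveat ``which is not a rigorous form.'' The paper does not attempt to prove it; it is offered as a speculative heuristic motivated by Theorem~\ref{main: weighted one level density for z} together with the earlier observations of Knightly--Reno, Kowalski--Saha--Tsimerman, and Dickson. There is therefore no ``paper's own proof'' to compare your proposal against.

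What you have written is a reasonable research outline---restricting to weights built from products of $L$-values, expanding via approximate functional equations, and tracking whether the diagonal in the resulting trace formula survives---and you correctly identify the mechanism by which a central value produces an extra main term while off-critical values do not. But you should recognize that this is a sketch of a strategy with acknowledged obstacles (uniform trace formulas, root-number statistics, a robust definition of ``essentially contains''), not a proof. The paper itself treats the conjecture as open and suggests only that ``relative trace formulas would be more useful tools'' for attacking it; your proposal is in that spirit but does not close any of the gaps.
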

It might be better to consider the case where the weight factor $w_\Pi$ is a quotient of
special values of automorphic $L$-functions or a period integrals as long as $\Fcal$ consists of automorphic representations.
To attack the conjecture, relative trace formulas would be more useful tools
rather than the Arthur-Selberg trace formula.

\subsection{Framework}

The usual distribution of low-lying zeros has been mainly analyzed by usage of
the Petersson trace formula and the Kuznetsov trace formula, which are the same in the view point of relative trace formulas given by double integrals along the product of adelic quotients of two unipotent subgroups.
Shin and Templier used Arthur's invariant trace formula by
 estimating orbital integrals to quantify
automorphic Plancherel density theorem.
Contrary to those trace formulas, the proof of Theorem \ref{main: weighted one level density for z} relies on the explicit Jacquet-Zagier type trace formula
given by Tsuzuki and the author \cite{SugiyamaTsuzuki2018}, which is a generalization of the Eichler-Selberg trace formula to a parameterized version.
In our setting,
we need to treat not only the main term but also the second main term of the weighted automorphic Plancherel density
theorem quantitatively as in Theorem \ref{quantitative ver of equidist} in order to study weighted density of low-lying zeros.

This article is organized as follows.
In \S \ref{Refinements of equidistributions weighted by symmetric square L-functions},
we review the explicit Jacquet-Zagier trace formula for $\GL_2$ given by Tsuzuki and the author
\cite{SugiyamaTsuzuki2018}, and prove a refinement of the Plancherel density theorem
weighted by symmetric square $L$-functions \cite[Theorem 1.3]{SugiyamaTsuzuki2018}
by explicating the error term.
In \S \ref{Weighted distributions of low-lying zeros}, we introduce
symmetric $r$th $L$-functions for $\GL_2$ for any $r\in \NN$
and prove the main theorem \ref{main: weighted one level density for z}
on weighted density of low-lying zeros of such $L$-functions.
The latter part of this article from Appendix \ref{Comparison of the ST trace formula with Zagier's formula} to Appendix \ref{Comparison of the ST trace formula with Mizumoto's formula} is independent of our main results and is devoted to the comparison of
Tsuzuki and the author's trace formula \cite{SugiyamaTsuzuki2018} (called {\it the ST trace formula} in this article) 
with
Zagier's formula \cite{Zagier}, Takase's formula \cite{Takase} and Mizumoto's formula \cite{Mizumoto}.
This comparison is not so straightforward and non-trivial as the anonymous referee of \cite{SugiyamaTsuzuki2018} pointed out to the author.
Hence we prove that the ST trace formula recovers
Zagier's, Takase's and Mizumoto's formulas under conditions
that yield overlap with the ST trace formula, in 
Appendices \ref{Comparison of the ST trace formula with Zagier's formula}, \ref{Comparison of the ST trace formula with Takase's formula} and \ref{Comparison of the ST trace formula with Mizumoto's formula}, respectively.

\subsection{Notation}
Let $\NN$ be the set of positive integers
and set $\NN_0 = \NN\cup\{0\}$.
For a condition $\rm P$, $\delta({\rm P})$ is the generalized Kronecker delta symbol
defined by $\delta({\rm P})=1$ if $\rm P$ is true, and $\delta({\rm P})=0$ if $\rm P$ is false, respectively.
Throughout this article, any fractional ideal of $F$ is always supposed to be non-zero.
We set $\Gamma_\RR(s)=\pi^{-s/2}\Gamma(s/2)$ and $\Gamma_\CC(s)=2(2\pi)^{-s}\Gamma(s)$.

\section{Refinements of equidistributions weighted by symmetric square $L$-functions}
\label{Refinements of equidistributions weighted by symmetric square L-functions}
In this section, we give a quantitative version of the equidistribution
of Hecke eigenvalues weighted by $L(\frac{z+1}{2}, \Sym^2(\pi))$ in \cite[Theorem 1.3]{SugiyamaTsuzuki2018} as a deduction from the explicit Jacquet-Zagier trace formula in \cite{SugiyamaTsuzuki2018} by estimating error terms more explicitly.
For this purpose, let us review the Jacquet-Zagier type trace formula.

Let $F$ be a totally real number field of finite degree $d_F=[F:\QQ]$.
We suppose that the prime $2 \in \QQ$ is completely splitting
in $F$.
Let $D_F$ be the absolute value of the discriminant of $F/\QQ$.
The set of archimedean places (resp.\ non-archimedean places) of $F$ is denoted by $\Sigma_\infty$ (resp.\ $\Sigma_\fin$).
For an ideal $\ga$ of $\go$, we denote by $S(\ga)$ the set of all finite places dividing $\ga$
and by $\nr(\ga)$ the absolute norm $\nr(\ga)$, respectively.
For any $v \in \Sigma_\infty\cup\Sigma_\fin$, let $F_v$ be the completion of $F$ at $v$.
The normalized valuation of $F_v$ is denoted by $|\cdot|_v$.
For any $v \in \Sigma_\fin$, let $q_v$ be the cardinality of the residue field $\go_v/\gp_v$ of $F_v$, where $\go_v$ and $\gp_v$ are the integer ring of $F_v$ and its unique maximal ideal, respectively.
We fix a uniformizer $\varpi_v$ at every $v\in \Sigma_\fin$. Then,
$q_v=|\varpi_v|_v^{-1}$ holds.

For an ideal $\ga$ of $\go$,
the symbol $\ga=\square$ means that $\ga$ is a square of a non-zero ideal of $\go$.

\subsection{Trace formulas}
\label{Trace formulas}
We review the Jacquet-Zagier type trace formula given in \cite{SugiyamaTsuzuki2018}.
Let $S$ be a finite subset of $\Sigma_\fin$.
Let $\gn$ be a square-free ideal of $\go$ relatively prime to $S$ and $2\go$.
Let $\Pi_{\rm cus}(l,\gn)$ denote the set of all irreducible cuspidal
automorphic representations
$\pi\cong \otimes_v'\pi_v$ of $\PGL_2(\AA)$ such that $\pi_v$ for each $v \in \Sigma_\infty$ is isomorphic to the discrete series representation $D_{l_v}$
of $\PGL_2(\RR)$ whose minimal ${\rm O}(2)$-type is $l_v$ and the conductor $\gf_\pi$
of $\pi$ divides $\gn$.
For any $\pi \in \Pi_{\rm cus}^*(l,\gn)$ and $v \in \Sigma_\fin-S(\gf_\pi)$,
the Satake parameter of $\pi_v$ is denoted by $(q_v^{-\nu_v(\pi)/2}, q_v^{\nu_v(\pi)/2})$.
Notice Blasius' bound $|q_v^{\nu_v(\pi)/2}|=1$ by \cite{Blasius}.
For a complex number $z$, set
\[W_\gn^{(z)}(\pi):= \nr(\gn\gf_\pi^{-1})^{(1-z)/2}\prod_{v \in S(\gn\gf_\pi^{-1})}
\left\{ 1+\frac{Q(I_v(|\cdot|_v^{z/2}))-Q(\pi_v)^2}{1-Q(\pi_v)^2} \right\},\]
where $I_v(\chi_v) = {\rm Ind}_{B(F_v)}^{\GL_2(F_v)}(\chi_v\boxtimes\chi_v^{-1})$
for a quasi-character $\chi_v : F_v^\times \rightarrow \CC^\times$
denotes the normalized parabolic induction, and we set
\begin{align*}
Q(\tau) = \frac{a_v+a_v^{-1}}{q_v^{1/2}+q_v^{-1/2}}
\end{align*}
for spherical representations $\tau$ of $\PGL_2(F_v)$ with the Satake parameter $(a_v, a_v^{-1})$.

Let $\Acal_v$ for each $v \in \Sigma_{\fin}$ be the space consisting of all
holomorphic functions $\a_v$ on $\CC/(4\pi i(\log q_v)^{-1}\ZZ)$ such that $\a_v(-s_v)=\a_v(s_v)$, and set $\Acal_S=\bigotimes_{v \in S} \Acal_v$.
Then, for any $\a \in \Acal_S$,
we define the quantity $\II_{\rm cusp}^0(\gn|\a,z)$ arising in the cuspidal part of the spectral side in the explicit Jacquet-Zagier type trace formula as
\[\II_{\rm cusp}^0(\gn|\a,z) = \frac{1}{2}D_F^{z-1/2}\nr(\gn)^{(z-1)/2}
\sum_{\pi \in \Pi_{\rm cus}(l,\gn)} W_{\gn}^{(z)}(\pi) \frac{L(\frac{z+1}{2}, \Sym^2(\pi))}{L(1,\Sym^2(\pi))}\a(\nu_S(\pi))\]
with $\nu_S(\pi) = (\nu_v(\pi))_{v \in S}$.
Here $L(s, \Sym^2(\pi))$ is the completed symmetric square $L$-function associated with $\pi$.
The following is also needed to describe the spectral side:
\[C(l,\gn) := D_F^{-1}\prod_{v \in \Sigma_\infty}\frac{4\pi}{l_v-1}\prod_{v \in S(\gn)}\frac{1}{1+q_v}.\]

Next we define the quantities arising in the geometric side of the explicit Jacquet-Zagier type trace formula.
For $v \in \Sigma_\infty$, we set
\begin{align*}
\Ocal_v^{+,(z)}(a)=\frac{2\pi}{\Gamma(l_v)}
\frac{\Gamma(l_v+\frac{z-1}{2})\Gamma(l_v+\frac{-z+1}{2})}
{\Gamma_\RR(\frac{1+z}{2})\Gamma_\RR(\frac{1-z}{2})}
\delta(|a|>1)(a^2-1)^{1/2}\mathfrak{P}_{\frac{z-1}{2}}^{1-l_v}(|a|)
\end{align*}
and
\begin{align*}
\Ocal_v^{-,(z)}(a)=\frac{\pi}{\Gamma(l_v)}
\Gamma\left(l_v+\tfrac{z-1}{2}\right)\Gamma\left(l_v+\tfrac{-z+1}{2}\right)
\sgn(a)(1+a^2)^{1/2}\{\mathfrak{P}_{\frac{z-1}{2}}^{1-l_v}(ia)
-\mathfrak{P}_{\frac{z-1}{2}}^{1-l_v}(-ia)\}
\end{align*}
for $a \in F_v\cong \RR$,
where $\fP_{\nu}^{\mu}(x)$ is the associated Legendre function of the first kind
defined on $\CC- (-\infty, 1]$ (cf.\ \cite[\S 4.1]{MOS}). 

For $v \in \Sigma_\fin$,
let $\varepsilon_\delta$ for $\delta \in F_v^\times$ denote the real-valued character of
$F_v^\times$ corresponding to $F_v(\sqrt{\delta})/F_v$ by local class field theory.
We define functions on $F_v^\times$ associated with $\delta \in F_v^\times$, $\e\in\{0,1\}$ and $z,s \in \CC$ satisfying
$\Re(s)>(|\Re(z)|-1)/2$ by
{\allowdisplaybreaks
	\begin{align*}
	\Ocal_{v,\e}^{\delta,(z)}(a)&=\frac{\zeta_{F_v}(-z)}{L_{F_v}\left(\tfrac{-z+1}{2},\varepsilon_{\delta}\right)}\biggl(\frac{1+q_v^{\frac{z+1}{2}}}{1+q_v}\biggr)^{\e}
	|a|_v^{\frac{-z+1}{4}}
	+\frac{\zeta_{F_v}(z)}{L_{F_v}\left(\tfrac{z+1}{2},\varepsilon_{\delta}\right)}
	\biggl(\frac{1+q_v^{\frac{-z+1}{2}}}{1+q_v}\biggr)^{\e}
	|a|_v^{\frac{z+1}{4}}
	\end{align*}
}for all $a \in F_v^\times$, and
{\allowdisplaybreaks\begin{align*}
	\Scal_{v}^{\delta,(z)}(s;a)
	&=-q_v^{-\frac{s+1}{2}}\frac{\zeta_{F_v}\left(s+\frac{z+1}{2}\right)\zeta_{F_v}\left(s+\tfrac{-z+1}{2}\right)}{L_{F_v}(s+1,\varepsilon_{\delta})}
	|a|_v^{\frac{s+1}{2}}, \quad (|a|_v\leq 1), 
	\\
	\Scal_v^{\delta,(z)}(s;a)
	&=-q_v^{-\frac{s+1}{2}}
	\left\{\frac{\zeta_{F_v}(-z)\zeta_{F_v}\left(s+\tfrac{z+1}{2}\right)}{L_{F_v}\left(\tfrac{-z+1}{2},\varepsilon_{\delta}\right)}\left|a\right|_v^{\frac{-z+1}{4}}
	+\frac{\zeta_{F_v}(z)\zeta_{F_v}\left(s+\tfrac{-z+1}{2}\right)}{L_{F_v}\left(\tfrac{z+1}{2},\varepsilon_{\delta}\right)}\left|a\right|_v^{\frac{z+1}{4}}\right\}, \quad (|a|_v>1).
	\notag
	\end{align*}
}Here $\zeta_{F_v}(s)$ and $L_{F_v}(s, \varepsilon_\delta)$ denote the local $L$-factors
attached to the trivial character of $F_v^\times$ and to $\varepsilon_\delta$, respectively.
Furthermore, for a function $\a_v \in \Acal_v$ and $a\in F_v^\times$, set
\[
\hat \Scal_v^{\delta,(z)}(\a_v;a) =
\tfrac{1}{2\pi i}\int_{c - 2 \pi i(\log q_v)^{-1}}^{c + 2\pi i(\log q_v)^{-1}} \Scal_v^{\delta,(z)}(s;a)\,\a_v(s)\,\frac{\log q_v}{2}(q_v^{(1+s)/2}-q_v^{(1-s)/2})ds
\]
for some $c \in \RR$.

Take a function $\a \in \Acal_S$. If $\a$ is a pure tensor of the form $\otimes_{v\in S} \a_v$, we set
\[
{\bf B}_{\fn}^{(z)}(\a|\Delta;\fa)=\prod_{v\in \Sigma_\fin-S\cup S(\fn)}\Ocal_{0,v}^{\Delta,(z)}(a_v)\prod_{v\in S(\fn)}\Ocal_{1,v}^{\Delta,(z)}(a_v)\prod_{v\in S}
\hat\Scal_v^{\Delta,(z)}(\a_v,a_v)
\]
for any ideal $\ga$ of $\go$, any $\Delta \in F^\times$ and any $z \in \CC$.
Further we set
\[
\Upsilon_v^{(z)}(\a_v)=
\prod_{v \in S}
\tfrac{1}{2\pi i} \int_{c_v-2\pi i (\log q_v)^{-1}}^{c_v+2\pi i(\log q_v)^{-1}}
\frac{-q_v^{-{(s_v+1)}/{2}}}{1-q_v^{-s_v-(z+1)/{2}}}\a_v(s_v)\frac{\log q_v}{2}
(q_v^{(1+s_v)/2}-q_v^{(1-s_v)/2})ds_v\]
with a fixed sufficiently large $c_v \in \RR$ for each $v \in S$
and set $\Upsilon^{(z)}(\a)=\prod_{v \in S}\Upsilon_v^{(z)}(\a_v)$.

From functions defined above, the quantities in the geometric side are defined as $\JJ_{\rm unip}^{0}(\gn|\a,z)$, $\JJ_{\rm hyp}^{0}(\gn|\a,z)$ and $\JJ_{\rm ell}^{0}(\gn|\a,z)$.
First set
\[\JJ_{\rm unip}^{0}(\gn|\a,z) = D_F^{-\frac{z+2}{4}}\zeta_F(-z)\Upsilon^{(z)}(\a)\prod_{v \in S(\gn)}\frac{1+q_v^{\frac{z+1}{2}}}{1+q_v}\prod_{v \in \Sigma_{\infty}}2^{1-z}\pi^{\frac{3-z}{4}}\frac{\Gamma(l_v+\frac{z-1}{2})}{\Gamma(\frac{z+1}{4})\Gamma(l_v)}\]
and
\[\JJ_{\rm hyp}^{0}(\gn|\a,z) =
\tfrac{1}{2}D_F^{-1/2}\zeta_F(\tfrac{1-z}{2})
\sum_{a \in \go(S)^\times_{+}-\{1\}}
\bfB_\gn^{(z)}(\a|1;a(a-1)^{-2}\go)
\prod_{v \in \Sigma_\infty}\Ocal_v^{+,(z)}(\tfrac{a+1}{a-1}),
\]
where $\zeta_F(s)$ is the completed Dedekind zeta function of $F$
and $\go(S)_{+}^{\times}$ is the totally positive unit group of the $S$-integers of $F$.
For any $\Delta \in F^\times$ such that $\sqrt{\Delta} \notin F^\times$,
let $\varepsilon_\Delta$ be the real-valued character of $F^\times \bsl \AA^\times$
corresponding to $F(\sqrt{\Delta})/F$ by class field theory, and
$L(s,\varepsilon_\Delta)$ the completed Hecke $L$-function associated with $\varepsilon_\Delta$.

Then, we set
\[\JJ_{\rm ell}^{0}(\gn|\a,z)=\frac{1}{2}D_F^{\frac{z-1}{2}}\sum_{(t:n)_F}
\nr(\mathfrak{d}_\Delta)^{\frac{z+1}{4}}L(\tfrac{z+1}{2},\varepsilon_\Delta)
\bfB_\gn^{(z)}(\a|\Delta;n\gf_\Delta^{-2})
\prod_{v \in \Sigma_{\infty}}\Ocal_v^{\sgn(\Delta^{(v)}),(z)}(t|\Delta|_v^{-1/2}),
\]
where $\mathfrak{d}_\Delta$ is the relative discriminant of $F(\sqrt{\Delta})/F$
and $\gf_\Delta$ is the fractional ideal of $F$ satisfying $\Delta\go=\mathfrak{d}_\Delta\gf_\Delta^2$.
In the summation defining $\JJ_{\rm ell}^0(\gn|\a,z)$,
$(t:n)_F$ runs over the different cosets $\{(ct, c^2 n) \in F \times F \ | \ c \in F^\times \}$
such that $\Delta=t^2-4n \in F^\times -(F^\times)^2$,
$(t,n) \in \{(c_v t_v , c_v^2 n_v) \ | \ c_v\in F_v^\times, t_v \in \go_v, n_v \in \go_v^\times \}$
for all $v \in \Sigma_\fin-S$, and $\ord_v(n\gf_\Delta^{-2})<0$ for all $v \in S(\gn)$
with $\varepsilon_{\Delta,v}$ being unramified and non-trivial.

From the preparation so far, the explicit Jacquet-Zagier type trace formula is given
as follows.
\begin{thm}\label{JZtrace}
	{\rm(}\cite[Corollary 1.2]{SugiyamaTsuzuki2018}{\rm)}
Let $S$ be a finite set of $\Sigma_{\fin}$.
	Let $l=(l_v)_{v \in \Sigma_\infty}$ be a family of positive even integers such that $\min_{v \in \Sigma_{\infty}}l_v\ge 4$, and $\gn$ a square-free ideal of $\go$ relatively prime to $2\go$ and $S$.
For any $z \in \CC$ with $|\Re(z)|<\min_{v \in \Sigma_{\infty}}l_v-3$, we have
\[(-1)^{\#S}C(l,\gn)\II_{\rm cusp}^0(\gn|\a,z) = D_F^{z/4}\{\JJ_{\rm unip}^0(\gn|\a,z)
+\JJ_{\rm unip}^0(\gn|\a,-z) \} +\JJ_{\rm hyp}^0(\gn|\a,z) +\JJ_{\rm ell}^0(\gn|\a,z).\]
\end{thm}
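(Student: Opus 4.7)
The strategy is to expand a single Jacquet--Zagier type integral of a kernel function against an Eisenstein series in two ways and compare. First I would form the kernel $K_\phi(x,y) = \sum_{\gamma \in \PGL_2(F)} \phi(x^{-1}\gamma y)$ for a factorizable test function $\phi = \otimes_v \phi_v$ on $\PGL_2(\AA)$ chosen so that: at each $v \in \Sigma_\infty$, $\phi_v$ is a matrix coefficient of the discrete series $D_{l_v}$ projecting onto the minimal ${\rm O}(2)$-type; at $v \in S(\gn)$, $\phi_v$ is the characteristic function of the Hecke congruence subgroup of level $\gn_v$; at $v \in S$, $\phi_v$ is the spherical Hecke element whose Satake transform packages $\a_v$; and at other finite places, $\phi_v$ is the unit in the spherical Hecke algebra. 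One then studies
\[
J(\phi,z) \,=\, \int_{\PGL_2(F)\bsl \PGL_2(\AA)} K_\phi(x,x)\, E^*\!\left(x,\tfrac{z+1}{2}\right) dx,
\]
where $E^*(\cdot,s)$ is an appropriately completed Eisenstein series on $\PGL_2$ induced from the Borel.

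For the spectral side, I would substitute the spectral expansion of $K_\phi$. The cuspidal contribution, via Rankin--Selberg unfolding and the factorization $L(s,\pi\times\pi) = \zeta_F(s)\, L(s, \Sym^2(\pi))$, yields precisely $\II_{\rm cusp}^0(\gn|\a,z)$: the weight $L(\tfrac{z+1}{2},\Sym^2(\pi))/L(1,\Sym^2(\pi))$ arises naturally because $L(1,\Sym^2(\pi))$ normalizes the Petersson pairing, while $W_\gn^{(z)}(\pi)$ encodes the local oldform contribution at $v \mid \gn\gf_\pi^{-1}$ via an explicit Whittaker computation, and $\a(\nu_S(\pi))$ appears at $v\in S$ through the Satake isomorphism. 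After Maass--Selberg type regularization, the contributions of the residual and continuous spectra combine with the geometric unipotent orbital integrals to yield the pair of terms $\JJ_{\rm unip}^0(\gn|\a,z)+\JJ_{\rm unip}^0(\gn|\a,-z)$ on the geometric side, the $\pm z$ symmetry reflecting the $s \leftrightarrow 1-s$ functional equation of $E^*$.

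For the geometric side, I would group the sum defining $K_\phi(x,x)$ by $\PGL_2(F)$-conjugacy classes of $\gamma$. Semisimple classes split into hyperbolic classes, parameterized after a change of variables by $a \in \go(S)_+^\times - \{1\}$, and elliptic classes, parameterized by pairs $(t,n) \in F \times F$ modulo the scaling action with $\Delta = t^2 - 4n \in F^\times-(F^\times)^2$. For the hyperbolic term the archimedean orbital integrals reduce to $\Ocal_v^{+,(z)}$ by explicit integration of matrix coefficients of $D_{l_v}$ against split tori, producing the associated Legendre functions $\mathfrak{P}_{(z-1)/2}^{1-l_v}$. For the elliptic term the global orbital integral over the anisotropic torus unfolds the completed Hecke $L$-function $L(\tfrac{z+1}{2},\varepsilon_\Delta)$ of $F(\sqrt\Delta)/F$; the local factors outside $S$ reduce to $\Ocal_{\e,v}^{\Delta,(z)}$ by direct lattice counts, while the factor at $v \in S$ is repackaged as the Mellin--Barnes integral $\hat\Scal_v^{\Delta,(z)}(\a_v;a_v)$ against the spherical Plancherel measure.

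The main obstacle is the explicit closed-form evaluation of the local orbital integrals. At archimedean places one must reduce integrals of matrix coefficients of $D_{l_v}$ against both split and compact tori to associated Legendre functions, tracking every $\Gamma$-factor so that the archimedean factors on the spectral and geometric sides match identically; the restriction $|\Re(z)|<\min_v l_v - 3$ comes precisely from the convergence region of these integrals, controlled by the asymptotics of $\mathfrak{P}_{(z-1)/2}^{1-l_v}$. At $v \in S$, the key technical point is to verify that contour integration of $\Scal_v^{\Delta,(z)}(s;a)\,\a_v(s)$ against the spherical Plancherel density reproduces the correct local orbital integral of the Hecke operator, and that the basic orbital integrals $\Ocal_{0,v}^{\Delta,(z)}$ extend coherently under the Hecke action. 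After establishing the identity in a region of absolute convergence for both sides, the full statement in the strip $|\Re(z)|<\min_v l_v - 3$ follows by analytic continuation in $z$, since all explicit local factors are meromorphic with controllable growth.
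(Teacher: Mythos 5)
You should first be aware that this paper contains no proof of Theorem \ref{JZtrace}: the statement is imported verbatim from \cite[Corollary 1.2]{SugiyamaTsuzuki2018} (with the remark that the restriction on dyadic places in $S$ made there is removed by \cite[\S 5]{SugiyamaTsuzuki2019}). So your proposal is not being compared with an in-paper argument but with the proof in the cited reference, and the route you sketch is not the route taken there. What you describe --- building $K_\phi(x,x)$ from matrix-coefficient test data at the archimedean places, integrating against a completed Eisenstein series, unfolding the Rankin--Selberg integrals on the spectral side and sorting $\PGL_2(F)$-conjugacy classes on the geometric side --- is the classical Zagier/Jacquet--Zagier derivation, essentially the method of \cite{Zagier}, \cite{Takase}, \cite{Mizumoto}. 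The cited proof is instead carried out in the relative-trace-formula framework of \cite{SugiyamaTsuzukiActa}, with smoothed adelic Green functions (rather than matrix coefficients) as archimedean test data; that choice is what makes both sides absolutely convergent and every local constant explicit, and it is precisely because the ST formula is not produced by the classical unfolding that the appendices of the present paper must do genuine analytic work to match it with Zagier's, Takase's and Mizumoto's formulas. Your approach, if completed, would in effect be an independent rederivation closer in spirit to those classical papers; what it buys is conceptual directness, what it loses is the built-in convergence and the automatic bookkeeping of constants.

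As a proof, the proposal also has one essential gap: the step where ``Maass--Selberg type regularization'' is said to make the residual and continuous spectra ``combine with the geometric unipotent orbital integrals'' into $D_F^{z/4}\{\JJ_{\rm unip}^0(\gn|\a,z)+\JJ_{\rm unip}^0(\gn|\a,-z)\}$. With the full kernel the integral $\int K_\phi(x,x)E^*(x,\tfrac{z+1}{2})\,dx$ diverges; one must truncate or subtract the non-cuspidal part of the kernel, compute the Eisenstein and residual contributions explicitly (they do not disappear merely because the left-hand side of the theorem is purely cuspidal), and verify that together with the genuine unipotent orbital integrals they assemble into exactly the stated pair of terms, with the $z\mapsto -z$ symmetry and the precise constants ($D_F^{z/4}$, $\zeta_F(-z)$, the factors $\Gamma(l_v+\tfrac{z-1}{2})/\Gamma(\tfrac{z+1}{4})\Gamma(l_v)$, the product over $S(\gn)$, and the factor $\Upsilon^{(z)}(\a)$ at $v\in S$). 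This is where the real difficulty of any Jacquet--Zagier-type formula lies, and it cannot be handled as an afterthought. The remaining ingredients you name --- the oldform weight $W_\gn^{(z)}(\pi)$, the parameterization of hyperbolic classes by $\go(S)_+^\times-\{1\}$ forced by your choice of test data outside $S$, the Legendre-function evaluations at infinity giving the range $|\Re(z)|<\min_v l_v-3$, and the contour-integral factors $\hat\Scal_v^{\Delta,(z)}$ at $v\in S$ --- are the correct ones, but each is a substantial explicit computation that the proposal only points to rather than carries out.
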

\begin{rem}
{\rm In \cite{SugiyamaTsuzuki2018}, It is assumed that $S$ does not include the dyadic places.
This assumption is removable by \cite[\S5]{SugiyamaTsuzuki2019}.}
\end{rem}

\subsection{Quantitative versions of weighted equidistributions}
In this subsection, we give a quantitative version of weighted equidistribution of the Satake parameters of $\pi \in \Pi_{\rm cus}(l,\gn)$.
We give estimates of geometric terms $\JJ_{\rm unip}^0(\gn|\a,z)$, $\JJ_{\rm hyp}^0(\gn|\a,z)$
and $\JJ_{\rm ell}^0(\gn|\a,z)$ by making the dependence on the test function $\a$ and $S$ explicit.
In what follows, we do not mention the dependence on $F$ and $l$ of the implied constants.

Let $S$ be a finite subset of $\Sigma_{\fin}$.
For $v \in S$, let $\Acal_v^0$ be the space of all Laurent polynomials in $q_v^{-s_v/2}$ which is invariant under
$q_v^{-s_v/2} \mapsto q_v^{s_v/2}$.
Then $\Acal_v^0$ has a $\CC$-basis $(q_v^{-s_v/2})^{n} + (q_v^{s_v/2})^{n}$ for all $n \in \NN_0$.
Let $\Acal_v^0[m]$ for $m \in \NN_0$ be the subspace of $\Acal_v^0$ generated by
$(q_v^{-s_v/2})^{n}+ (q_v^{s_v/2})^{n}$ for all $n$ with $0\le n \le m$.

Let $\ga$ be an integral ideal of the form $\ga=\prod_{v \in S} (\gp_v\cap \go)^{n_v}$ with $n_v \in \NN$. Set $\Acal(\ga) =\otimes_{v \in S}\Acal_v^0[n_v]$.
We fix a large $c>1$ and define $\bfc=(c_v)_{v \in S} \in \RR^S$ by $c_v=c$ for all $v \in S$.
For $\a \in \Acal_S$, set $\|\a\|=(\frac{1}{2\pi})^{\#S}\int_{\LL_S(\bfc)}|\a(\bfs)|\,
|d\mu_S(\bfs)|$, which depends on $\bfc$.
Let $X_{n}$ for $n \in \NN_0$ be the polynomial defined by
$\frac{\sin (n+1)\theta}{\sin \theta} = X_n(2\cos\theta)$.
The following lemma is clear by $\sup_{x \in [-2,2]}|X_n(x)|\ll n+1$.
\begin{lem}\label{esti of alpha}
For any $\a(\bfs) =\prod_{v \in S}X_{m_v}(q_v^{-s/2}+q_v^{s/2}) \in \Acal(\ga)$, we have
\[\|\a\| \ll_{\bfc} \prod_{v \in S}(m_v+1)\ll_\e\nr(\ga)^\e\]
for any $\e>0$ where the implied constant is independent of $\a$ and $\ga$.
\end{lem}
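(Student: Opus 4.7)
The plan is to factor $\|\a\|$ as a product over $v \in S$, bound each local factor by $O_c(m_v+1)$ using the explicit Chebyshev structure of $X_{m_v}$, and finish with the standard divisor estimate for the second inequality.

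Since $\a$ is a pure tensor $\bigotimes_{v\in S}\a_v$ and the integration data $(\LL_S(\bfc),d\mu_S)$ factor over $v\in S$, one has
\[
\|\a\|=\prod_{v\in S}\frac{1}{2\pi}\int_{\LL_v(c)}|\a_v(s_v)|\,|d\mu_v(s_v)|=:\prod_{v\in S}\|\a_v\|_v,
\]
and the task reduces to showing $\|\a_v\|_v\ll_c m_v+1$ uniformly in $v\in\Sigma_\fin$. Parametrising the contour by $s_v=c+it_v$ with $t_v\in[-2\pi/\log q_v,2\pi/\log q_v]$ and noting that on the unitary contour ($c=0$) one has $q_v^{-s_v/2}+q_v^{s_v/2}=2\cos(t_v\log q_v/2)$, the defining identity $\sin((n+1)\theta)/\sin\theta=X_n(2\cos\theta)$ immediately yields the sharp pointwise bound $|X_{m_v}(q_v^{-s_v/2}+q_v^{s_v/2})|\le m_v+1$ there. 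The Chebyshev-type expansion
\[
X_n(\alpha+\alpha^{-1})=\sum_{k=0}^{n}\alpha^{n-2k},\qquad \alpha\in\CC^\times,
\]
obtained from the previous formula by $\alpha=e^{i\theta}$, then shows that this bound extends to the shifted contour up to a multiplicative constant depending only on $c$, through the uniformly bounded total mass of $|d\mu_v|$ (the damping factor $(q_v^{(1+s_v)/2}-q_v^{(1-s_v)/2})$ present in $d\mu_v$ cancels the contour length $4\pi/\log q_v$). Hence $\|\a_v\|_v\ll_c m_v+1$, and taking the product over $v\in S$ gives the first inequality.

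The second inequality $\prod_{v\in S}(m_v+1)\ll_\e\nr(\ga)^\e$ is the classical divisor-function bound: for each $\e>0$ there exists $C_\e>0$ with $n+1\le C_\e q^{\e n}$ uniformly in $q\ge 2$ and $n\in\NN_0$, and the accumulated factor $C_\e^{\#S}$ is absorbed into $\nr(\ga)^\e$ via $\#S\ll\log\nr(\ga)$. The main technical point is verifying that the $\bfc$-dependence of $d\mu_v$ truly decouples from $m_v$, so that the local $L^1$-bound remains genuinely linear in $m_v+1$; once this bookkeeping is handled, the rest reduces to the triangle inequality and a routine divisor estimate.
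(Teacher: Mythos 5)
Your overall plan — factor $\|\a\|$ over $v\in S$, use $\sup_{x\in[-2,2]}|X_n(x)|\le n+1$, and finish with the divisor bound $\prod_{v}(m_v+1)\ll_\e\nr(\ga)^\e$ — is exactly what the paper has in mind (its entire proof is the remark that $\sup_{x\in[-2,2]}|X_n(x)|\ll n+1$). The gap is in the bridge you build between the tempered segment and the contour $\LL_S(\bfc)$ with $c>1$ on which $\|\a\|$ is defined, and that bridge is false as stated. On $\Re(s_v)=c>1$ one has $|\alpha|=|q_v^{s_v/2}|=q_v^{c/2}>1$, so the very expansion you quote, $X_n(\alpha+\alpha^{-1})=\sum_{k=0}^{n}\alpha^{n-2k}$, shows that $|X_{m_v}(q_v^{-s_v/2}+q_v^{s_v/2})|$ is of size $q_v^{cm_v/2}$ on that line (indeed $|X_n(\alpha+\alpha^{-1})|\ge(|\alpha|^{n+1}-|\alpha|^{-(n+1)})/(|\alpha|+|\alpha|^{-1})\gg q_v^{cn/2}$ uniformly on the segment), so the constant cannot depend only on $c$: the factor $q_v^{cm_v/2}$ grows with $\nr(\ga)$. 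Likewise, with the damping factor $(q_v^{(1+s_v)/2}-q_v^{(1-s_v)/2})$ that you yourself place in $d\mu_v$, the total mass of $|d\mu_v|$ over the segment of length $4\pi/\log q_v$ is of order $q_v^{(1+c)/2}$, not uniformly bounded; the $\log q_v$ cancels the contour length but the exponential factor remains. The triangle inequality on the shifted line therefore yields only $\|\a_v\|_v\ll_c(m_v+1)\,q_v^{cm_v/2+(1+c)/2}$, nowhere near $m_v+1$, so the first inequality is not proved by your argument.

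The point you defer as ``the main technical point'' is thus not bookkeeping: an integral of $|\a_v|$ cannot be moved from $\Re(s_v)=c$ to $\Re(s_v)=0$ by a contour shift, and any honest proof must invoke the precise normalisation of $\LL_S(\bfc)$ and $d\mu_S$ from \cite{SugiyamaTsuzuki2018}, which is what reduces the estimate to the range where the argument of $X_{m_v}$ lies in $[-2,2]$ and the one-line sup bound applies; asserting that the tempered bound ``extends to the shifted contour up to a constant depending only on $c$'' is exactly the step that fails. A minor additional remark: absorbing $C_\e^{\#S}$ into $\nr(\ga)^\e$ via $\#S\ll\log\nr(\ga)$ only produces a fixed positive power of $\nr(\ga)$; the standard divisor-bound argument instead treats places with $q_v>2^{1/\e}$ (where $m_v+1\le q_v^{\e m_v}$ with constant $1$) separately from the $O_{F,\e}(1)$ small places. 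That part is routine and easily repaired; the shifted-contour step is the genuine gap.
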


\begin{lem}\label{hyperbolic term}
Fix any $\s \in [0,1]$ and suppose $\underline{l}:=\min_{v \in \Sigma_\infty}l_v > \s+3$.
For any $\a \in \Acal(\ga)$ and $z \in \CC$ with $|\Re(z)|\le \s$, we have
\[\JJ_{\rm hyp}^0(\gn|\a,z) \ll_{\s,\e,\e'} \nr(\ga)^{(\frac{\underline{l}}{2}+d_F-1-(\s+1)/2-\e)/d_F+\e'}
\|\a\| \, \nr(\gn)^{-\delta+\e}\]
uniformly in $z$, $\a$, $\gn$ and $\ga$ for any small $\e, \e'>0$, where $\delta \in (1/2,1]$ is defined by
\[\delta:=\begin{cases}
1 & (\s\le \underline{l}-3-d_F), \\
\frac{1}{2}+\frac{\underline{l}-3-\s}{2d_F} & (\s>\underline{l}-3-d_F).
\end{cases}\]
\end{lem}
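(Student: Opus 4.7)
The strategy is to bound $|\JJ_{\rm hyp}^0(\gn|\a,z)|$ by estimating each local factor in the summand $\bfB_\gn^{(z)}(\a|1;a(a-1)^{-2}\go)\prod_{v\in\Sigma_\infty}\Ocal_v^{+,(z)}(\tfrac{a+1}{a-1})$ place-by-place and then summing over the totally positive $S$-units $a\in\go(S)_+^\times-\{1\}$. The scalar prefactor $\tfrac12 D_F^{-1/2}\zeta_F(\tfrac{1-z}{2})$ is uniformly bounded on $|\Re z|\le\s$ since the hypothesis $\s<\underline{l}-3$ keeps $\Re(\tfrac{1-z}{2})$ away from the pole of $\zeta_F$ at $s=1$.

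First I would collect local estimates. At archimedean $v$, classical asymptotics of $\fP_{(z-1)/2}^{1-l_v}$ combined with the factor $(a^2-1)^{1/2}$ give uniform decay $|\Ocal_v^{+,(z)}(\tfrac{a+1}{a-1})|\ll_\s|a-1|_v^{(l_v-1)/2-\s/2}|a|_v^{-(l_v-1)/2+\s/2+\e}$ on the strip. At $v\notin S\cup S(\gn)$, one uses the identity $|a(a-1)^{-2}|_v=|a-1|_v^{-2}\ge 1$ (since $a$ is an $S$-unit) together with the defining formula to get $|\Ocal_{0,v}^{1,(z)}(a(a-1)^{-2})|\ll|a-1|_v^{-(\s+1)/2}+1$. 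At $v\in S(\gn)$ the additional ratio $(1+q_v^{(\pm z+1)/2})/(1+q_v)\ll q_v^{(\s-1)/2}$ yields $|\Ocal_{1,v}^{1,(z)}(a(a-1)^{-2})|\ll q_v^{(\s-1)/2}(|a-1|_v^{-(\s+1)/2}+1)$. Finally, at $v\in S$, shifting the contour defining $\hat\Scal_v^{1,(z)}(\a_v;a_v)$ to a line $\Re s=c$ with $c$ sufficiently large and applying Lemma \ref{esti of alpha} produces a bound linear in $\|\a\|$, with the $\nr(\ga)$-dependence encoded through the choice of $c$.

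Assembling the local estimates via the product formula $\prod_v|a|_v=1$ for $S$-units, the summand for a given $a$ is bounded by $H_\infty(a)^{-(\underline{l}-1-\s)/d_F+\e}\,\nr((a-1)\go(S))^{(\s+1)/2+\e}$ times the $S$- and $S(\gn)$-contributions, where $H_\infty(a)=\prod_{v\in\Sigma_\infty}\max(|a|_v,|a|_v^{-1})$. Summation over $a\in\go(S)_+^\times-\{1\}$ is carried out via the Dirichlet logarithmic embedding of the unit group (of rank $d_F-1+\#S$), with absolute convergence ensured by the hypothesis $\underline{l}>\s+3$; a standard divisor-type estimate on $\nr(a-1)$ completes the treatment of finite-place factors. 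The sharper $\delta$-power of $\nr(\gn)$ is extracted by splitting the sum according to whether $\gn$ divides $a-1$: the ``generic'' contribution $\gn\nmid(a-1)$ yields a saving of $\nr(\gn)^{(\s-1)/2}$ per term, while for ``exceptional'' $a$ with $\gn\mid(a-1)$ geometry of numbers forces $H_\infty(a-1)\gg\nr(\gn)^{1/d_F}$, producing an extra archimedean gain of $\nr(\gn)^{-(\underline{l}-1-\s)/(2d_F)}$. Combining these yields $\delta=1$ exactly when the archimedean decay dominates (the regime $\s\le\underline{l}-3-d_F$) and the interpolated value $\tfrac12+\tfrac{\underline{l}-3-\s}{2d_F}$ otherwise.

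The main obstacle is tracking constants carefully through the unit-counting and divisor sums to produce simultaneously the sharp $\nr(\gn)^{-\delta+\e}$ and the precise $\nr(\ga)$-exponent, whose denominator $d_F$ reflects the geometric mean over archimedean places forced by the product formula. Three ingredients are delicate: (i) optimizing the contour shift $c$ at $S$-places so that the $S$-local contribution pairs with Lemma \ref{esti of alpha} to produce the stated $\nr(\ga)$-power; (ii) uniform control of $\nr(a-1)$ in terms of $H_\infty(a)$ to convert finite-place factors into archimedean decay via the product formula; and (iii) the refined splitting of the $a$-sum by $\ord_\gq(a-1)$ together with the geometry-of-numbers lower bound, which is what produces the exact $\delta$-dichotomy rather than the naive bound $(\s-1)/2$ coming from the generic case alone.
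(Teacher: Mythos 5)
Your overall strategy (place-by-place bounds followed by a direct summation over $\go(S)_{+}^{\times}$) is not what the paper does --- the paper simply reruns the proof of \cite[Lemma 8.2]{SugiyamaTsuzuki2018} keeping the dependence on $\a$ and $\ga$ explicit, arriving at the majorant $C^{\#S}\|\a\|\sum_{\gc\mid\gn}\bigl\{\prod_{v\in S(\gn\gc^{-1})}\tfrac{4}{q_v+1}\prod_{v\in S(\gc)}\tfrac{4(q_v^{1/2}+1)}{q_v+1}\bigr\}\sum_{x\in\gc\ga^{-1}-\{0\}}f_\infty(x+1)$ and then quoting \cite[Lemma 7.19]{SugiyamaTsuzuki2018} for the lattice count --- and while a from-scratch argument could in principle work, yours has a decisive quantitative gap exactly where the lemma lives: the power of $\nr(\gn)$. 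Your ``generic'' saving at $v\mid\gn$, taken only from $(1+q_v^{(\Re z+1)/2})/(1+q_v)\ll q_v^{(\s-1)/2}$, gives at best $\nr(\gn)^{(\s-1)/2}$, which is $\nr(\gn)^{0}$ at $\s=1$ and never better than $\nr(\gn)^{-1/2}$; it therefore cannot produce $\delta\in(1/2,1]$, let alone $\delta=1$ in the regime $\s\le\underline{l}-3-d_F$. What you are discarding is a cancellation: when $|a(a-1)^{-2}|_v=1$, the two terms of $\Ocal_{1,v}^{1,(z)}$, \emph{including} the coefficients $\zeta_{F_v}(\mp z)/L_{F_v}(\tfrac{1\pm z}{2},\varepsilon_{1})$ that your bound ignores, sum to exactly $2/(1+q_v)$, so the generic per-place saving is $\asymp q_v^{-1}$; the weaker $q_v^{-1/2}$-type behaviour occurs only at places $v\mid\gn$ dividing $a-1$, where it is compensated by the sparsity of such $a$ (this is precisely the divisor parameter $\gc$ in the majorant above). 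That interplay, not your split ``$\gn\mid(a-1)$ or not'' with the $q_v^{(\s-1)/2}$ bound, is what yields the stated dichotomy for $\delta$.

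Two further steps would fail as written. First, your archimedean estimate has the wrong shape: since $(b^2-1)^{1/2}\fP_{(z-1)/2}^{1-l_v}(b)\asymp |b|^{(1+|\Re z|)/2}$ as $|b|\to\infty$, the factor $\Ocal_v^{+,(z)}(\tfrac{a+1}{a-1})$ \emph{grows} like $|a_v-1|^{-(1+\s)/2}$ as $a_v\to1$ and decays like $|a_v|^{-l_v/2+O(\s)}$ (resp.\ $|a_v|^{l_v/2-O(\s)}$) as $a_v\to\infty$ (resp.\ $a_v\to0$), whereas your bound vanishes at $a_v=1$ and has no decay at infinity; with it the sum over $S$-units would not even converge. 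The blow-up at $a_v=1$ must be cancelled against the finite-place growth $\prod_w|a-1|_w^{-(\s+1)/2}$ via the product formula, and that is exactly how the majorant $f_v(a_v)=\delta(a_v>0)(1+|a_v|)^{-l_v/2+(\s+1)/2+\e}$ used in the paper arises. Second, the $\nr(\ga)$-power is not produced by ``optimizing the contour $c$'' at $v\in S$: it is a counting phenomenon. The support conditions of $\hat\Scal_v^{1,(z)}(\a_v;\cdot)$ (cf.\ Lemma \ref{explicit of hatS}) force the relevant variable into the denser lattice $\gc\ga^{-1}$, and \cite[Lemma 7.19]{SugiyamaTsuzuki2018} gives $\sum_{x\in\gc\ga^{-1}-\{0\}}f_\infty(x+1)\ll\nr(\ga)\,\nr(\gc\ga^{-1})^{\{1-\underline{l}/2+(\s+1)/2+\e\}/d_F}$, which is precisely the source of the exponent $(\underline{l}/2+d_F-1-(\s+1)/2-\e)/d_F$. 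Your sketch never quantifies how the set of contributing units grows with $\ga$, so the stated $\nr(\ga)$-power is not established.
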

\begin{proof}
This is proved by explicating the implied constant depending on $\a$ and $\ga$ in \cite[Lemma 8.2]{SugiyamaTsuzuki2018}.
By the proof of \cite[Lemma 8.2]{SugiyamaTsuzuki2018}, we have the estimate
\[\JJ_{\rm hyp}^0(\gn|\a,z)\ll_{\s,\e}C^{\#S}\|\a\|
\sum_{\gc|\gn}
\{\prod_{v \in S(\gn\gc^{-1})}\tfrac{4}{q_v+1} \prod_{v \in S(\gc)}\tfrac{4(q_v^{1/2}+1)}{q_v+1}\}\sum_{x \in \gc\ga^{-1}-\{0\}}f_\infty(x+1),\]
where $f_\infty((a_v)_{v\in\Sigma_{\infty}}) =\prod_{v \in \Sigma_{\infty}}f_v(a_v)$ with
$f_v(a_v)=\delta(a_v>0)(1+|a_v|_v)^{-\frac{l_v}{2}+\frac{\s+1}{2}+\e}$, and $C>0$ is an absolute constant.
By
$\sum_{x \in \gc\ga^{-1}-\{0\}}f_\infty(x+1)\ll \nr(\ga)\nr(\gc\ga^{-1})^{\{1-\underline{l}/2+(\s+1)/2+\e\}/d_F}$ uniformly for $\gc$ and $\ga$ from \cite[Lemma 7.19]{SugiyamaTsuzuki2018}, we conclude the assertion.
\end{proof}

\begin{lem}\label{elliptic term}
For any $\a \in \Acal(\ga)$, we have
\[\JJ_{\rm ell}^0(\gn|\a,z)\ll_{\e}\nr(\ga)^{\e} \|\a\| \, \nr(\gn)^{-\delta'+\e}
\]
uniformly in $z\in \CC$ with $|\Re(z)| \le 1$, $\a$, $\gn$ and $\ga$ for any small $\e>0$, where $\delta' \in (1/2,1]$ is defined by
\[\delta':=\begin{cases}
1 & (\underline{l}>d_F+2), \\
\frac{1}{2}+\frac{\underline{l}-2}{2d_F} & (d_F+2 \ge \underline{l}).
\end{cases}\]
\end{lem}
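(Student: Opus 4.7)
The plan is to mirror the proof of Lemma \ref{hyperbolic term} by revisiting the argument of \cite[Lemma 8.3]{SugiyamaTsuzuki2018} and explicating how the implied constant depends on the test function $\a \in \Acal(\ga)$ and on the ideal $\ga$. The first step is to open up the definition of $\JJ_{\rm ell}^0(\gn|\a,z)$, using the product decomposition
\[
\bfB_\gn^{(z)}(\a|\Delta;n\gf_\Delta^{-2})=\prod_{v\in \Sigma_\fin-S\cup S(\fn)}\Ocal_{0,v}^{\Delta,(z)}(a_v)\prod_{v\in S(\fn)}\Ocal_{1,v}^{\Delta,(z)}(a_v)\prod_{v\in S}\hat\Scal_v^{\Delta,(z)}(\a_v;a_v),
\]
and splitting the triple product in the definition into three pieces according to whether a place is in $S$, divides $\gn$, or is archimedean.

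Next I would estimate each piece uniformly in $z$ with $|\Re(z)|\le 1$. At places $v\in S$, contour shifting for $\hat\Scal_v^{\Delta,(z)}$ absorbs the dependence on $\a_v$ into $\|\a\|$ after integrating over $\LL_S(\bfc)$, while the polynomial degree $n_v$ of $\a_v\in \Acal_v^0[n_v]$ produces only a factor that is $\ll \nr(\ga)^\e$ by Lemma \ref{esti of alpha}. At finite places $v \in S(\gn)$, the local factor $\Ocal_{1,v}^{\Delta,(z)}$ together with the characters $\varepsilon_{\Delta,v}$ give exactly the saving $\nr(\gn)^{-\delta'+\e}$ as in \cite[Lemma 8.3]{SugiyamaTsuzuki2018}, uniform in $\a$ and $\ga$ once the local test function has been separated out. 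At archimedean places, the bounds $\Ocal_v^{\pm,(z)}(t|\Delta|_v^{-1/2})\ll_\s (1+|t|_v|\Delta|_v^{-1/2})^{-(l_v-1)/2+(\s+1)/2+\e}$ come from the asymptotics of the associated Legendre function $\fP_{(z-1)/2}^{1-l_v}$, and serve to control the sum over $t$.

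The third step is the counting over orbits $(t:n)_F$. After using the $L$-function factor $L(\tfrac{z+1}{2},\varepsilon_\Delta)\ll_\e (D_F\nr(\gf_\Delta))^\e$ (Landau-type bound, valid on $|\Re z|\le 1$ up to the unramified factor) and grouping the sum according to the common factor $\gc\mid \gn$ coming from the $\Ocal_{1,v}$ support conditions, the counting of admissible $t$'s in the prescribed lattice reduces to a lattice point estimate of the type
\[
\sum_{x \in \gc\ga^{-1}\cap\{\text{box}\}} g_\infty(x)\ll \nr(\ga) \,\nr(\gc\ga^{-1})^{\{1-\underline{l}/2+(\s+1)/2+\e\}/d_F},
\]
analogous to \cite[Lemma 7.19]{SugiyamaTsuzuki2018}, applied now to the archimedean weight arising from $\prod_{v\in \Sigma_\infty}\Ocal_v^{\sgn(\Delta^{(v)}),(z)}$. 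Combining the bounds and optimizing over $\gc\mid\gn$ yields the exponent $\delta'$ in the two regimes, exactly as in the hyperbolic case but with $\s=1$ replaced by the sharper archimedean decay rate available here (which is why $\delta'$ has $\underline{l}-2$ in the numerator, compared with $\underline{l}-3-\s$ for Lemma \ref{hyperbolic term}).

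The main obstacle is uniformity: the argument of \cite[Lemma 8.3]{SugiyamaTsuzuki2018} was written with fixed test function, and the subtlety is to check that the polynomial-in-$\nr(\ga)$ losses coming from the $\a_v$'s at $v\in S$ do not destroy the $\nr(\gn)^{-\delta'}$ saving. This is handled by isolating the Mellin-Barnes integrals defining $\hat\Scal_v^{\Delta,(z)}$ on the contour $\Re(s_v)=c$, absorbing the size of $\a_v$ into $\|\a\|$, and then bounding the remaining $|a_v|_v^{(s_v+1)/2}$ factors by enforcing that the support in $a_v$ dictated by $\ord_v(n\gf_\Delta^{-2})$ only contributes an $\nr(\ga)^\e$ overhead thanks to Lemma \ref{esti of alpha}. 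Once this bookkeeping is in place, the two cases in the definition of $\delta'$ emerge mechanically from comparing $\underline{l}-2$ with $d_F$.
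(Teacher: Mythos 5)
Your overall strategy (rerun the proof of \cite[Lemma 8.3]{SugiyamaTsuzuki2018} and track the dependence of the implied constant on $\a$ and $\ga$) is the same as the paper's, but two of the concrete steps you supply would not go through. First, the bound $L(\tfrac{z+1}{2},\varepsilon_\Delta)\ll_\e (D_F\nr(\gf_\Delta))^\e$ is false in the range $|\Re(z)|\le 1$: for $z$ near $0$ the point $\tfrac{z+1}{2}$ is at or near the center of the critical strip, where only a convexity-type bound of size roughly $\nr(\gd_\Delta)^{1/4+\e}$ is available. This is not a cosmetic issue; the factor $\nr(\gd_\Delta)^{\frac{z+1}{4}}L(\tfrac{z+1}{2},\varepsilon_\Delta)$ is exactly the delicate input in the elliptic term, and the exponent $\delta'=\tfrac12+\tfrac{\underline{l}-2}{2d_F}$ (rather than something stronger) reflects how this $L$-value is actually handled in \cite[\S7--8]{SugiyamaTsuzuki2018}. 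With your $\e$-bound you would ``prove'' a better exponent than the true one, which signals that the argument as written is not correct. Second, you model the counting over $(t:n)_F$ on the hyperbolic mechanism, i.e.\ a lattice-point estimate over $x\in\gc\ga^{-1}$ as in \cite[Lemma 7.19]{SugiyamaTsuzuki2018}. The elliptic term is not organized that way: in the source the sum is reduced, via unit classes $\varepsilon\in\go^\times/(\go^\times)^2$, exponents $\nu\in\NN_0^S$ with $\nu_v\le\ord_v(\ga)$, and ideal-class representatives $\ga_j$ with $n\go=\ga_i\prod_v\gp_v^{\nu_v}$, to the quantities $\Xi^{(z,\bfc)}$ of \cite[Lemma 7.20]{SugiyamaTsuzuki2018} and then to the series $\Sigma(\gn,N_j)$ of \cite[p.3038]{SugiyamaTsuzuki2018}; it is from estimating $\Sigma(\gn,N_j)$ that $\delta'$ (with numerator $\underline{l}-2$, and uniformity in $z$) actually comes. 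Your claim that the two regimes of $\delta'$ ``emerge mechanically'' from a hyperbolic-style optimization is therefore unsubstantiated.

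Concerning the part you do emphasize, the bookkeeping of the $\nr(\ga)$- and $S$-dependence is the right concern and can be carried out: the paper isolates exactly three sources (a factor $16^{\#S}\ll_\e\nr(\ga)^\e$ from the local analysis around \cite[p.3028]{SugiyamaTsuzuki2018}; a sum over $\nu$ with $0\le\nu_v\le\ord_v(\ga)$, which is $\ll_\e 1$ after choosing the contour parameter $c$ large so that $\tfrac{1-c}{4}+\tfrac12+\tfrac{\underline{l}-2}{2d_F}<0$; and a factor $2^{\#S}\ll\nr(\ga)^\e$ from \cite[Lemma 7.20]{SugiyamaTsuzuki2018}), plus $\|\a\|$ from the contour integrals at $v\in S$. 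You should also note the point the paper flags explicitly: the hypothesis \cite[(7.11)]{SugiyamaTsuzuki2018} is needed and is secured by choosing the ideal-class representatives $\ga_j$ to be prime ideals coprime to $S$ once $\gn$ is fixed; your proposal omits this. To repair your write-up, replace the $\e$-bound on $L(\tfrac{z+1}{2},\varepsilon_\Delta)$ and the Lemma~7.19-style lattice count by the actual reduction to $\Xi^{(z,\bfc)}$ and $\Sigma(\gn,N_j)$, citing their estimation in \cite[pp.3038--3039]{SugiyamaTsuzuki2018} for the $\nr(\gn)^{-\delta'+\e}$ saving.
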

\begin{proof}
This is proved by explicating the implied constant depending on $\a$ and $\ga$ in \cite[Lemma 8.3]{SugiyamaTsuzuki2018}.
In the proof, the assumption \cite[(7.11)]{SugiyamaTsuzuki2018} is not imposed although we need it as in \cite[\S7]{SugiyamaTsuzuki2018}.
This is because we can take a complete system $\{\ga_j\}_{j=1}^{h}$ of representatives for the ideal class group of $F$ such that $\{\ga_{j}\}_{j=1}^{h}$ are prime ideals relatively prime to $S$ satisfying \cite[(7.11)]{SugiyamaTsuzuki2018} as long as we fix $\gn$.

By the proof of \cite[Lemma 8.3]{SugiyamaTsuzuki2018}
with the aid of \cite[Lemma 8.1]{SugiyamaTsuzuki2018}, we have
\[\JJ_{\rm ell}^0(\gn|\a,z)\ll C^{\#S}\|\a\|\sum_{(\gc,\gn_1,i,\varepsilon,\nu)}\Xi^{(z,\bfc)}(\gc,\gn_1,i,\varepsilon n_{i,\nu})\]
with an absolute constant $C>0$, where $(\gc,\gn_1,i,\varepsilon,\nu)$ varies so that
$\gn_1$ and $\gc$ are integral ideals such that $\gn_1|\gn$ and $\gc|\gn_1$,
$\varepsilon \in \go^\times/(\go^\times)^2$, $\nu\in\NN_0^S$ with $\nu_v \le \ord_v(\ga)$ for all $v \in S$, and
$1\le i \le h$ with $n_{i,\nu}\go = \ga_i\prod_{v \in S}\gp^{\nu_v}$.
To estimate $\Xi^{(z,\bfc)}(\gc,\gn_1,i,\varepsilon n_{i,\nu})$ as in \cite[Lemma 7.20]{SugiyamaTsuzuki2018},
the dependence on $\ga$ and $S=S(\ga)$ occurs
in the arguments in \cite[p.3028]{SugiyamaTsuzuki2018}\footnote
{In \cite[p.3029]{SugiyamaTsuzuki2018}, the factor $\prod_{v \in S}|\Delta_v^0|_v\times\prod_{v \in \Sigma_{\fin}-S}|4^{-1}|_v$ depending on $S$ occurs. This is negligible since it is estimated as
$\prod_{v \in S}|\Delta_v^0|_v\times\prod_{v \in \Sigma_{\infty}\cup S}|4|_v \le 4^{d_F}$.
}
(cf.\ \cite[Lemma 7.14 (3)]{SugiyamaTsuzuki2018}),
in \cite[p.3032]{SugiyamaTsuzuki2018}, and in the second term of the majorant in \cite[Lemma 7.20]{SugiyamaTsuzuki2018}.
The implied constant depending on $\ga, S$ in the first case is $16^{\#S}\ll_{\e}\nr(\ga)^{\e}$.
The constant in the second case is
\[\sum_{\substack{\nu=(\nu_v)_{v \in S} \\
 0\le \nu_v\le\ord_v(\ga) (\forall v \in S)}}
\hspace{-2mm}\{\prod_{v \in S}q_v^{\nu_v(\frac{-c_v+\varrho(z)}{4}+\e)}\}
|\nr(n_{i,\nu})|^{\frac{1}{2}+\frac{\underline{L}(z)-1}{2d_F}}
\ll (\sum_{\gc|\ga}1)\nr(\ga)^{\frac{1-c}{4}+\e+1/2 +\frac{\underline{l}-2-2\e}{2d_F} }
\ll_{\e} 1,
\]
where $\varrho(z)=\max(|\Re(z)|, 1)$ and $\underline{L}(z) = \underline{l}-\frac{1+\varrho(z)}{2}-2\e$ (cf.\ \cite[Lemma 7.20]{SugiyamaTsuzuki2018}).
Here we note $\varrho(z)=1$ and $\underline{L}(z)=\underline{l}-1-2\e$ by $|\Re(z)|\le1$,
and $c$ can be taken so that $\frac{1-c}{4}+\e+1/2 +\frac{\underline{l}-2-2\e}{2d_F} < 0$.
The constant in the third case is
\[\sum_{\nu \in \{0,1\}^S}\prod_{v \in S}\max(1,q_v^{\frac{-\Re(c_v)+|\Re(z)|}{4}+\e})
=2^{\# S}\ll\nr(\ga)^\e\]
by virtue of $|\Re(z)|\le 1$ and $c>1$.
From these, we obtain
\[\JJ_{\rm ell}^0(\gn|\a,z)\ll_{\e}\nr(\ga)^{\e} \|\a\|
\times\left\{\nr(\ga)^{\e}\sum_{j=1}^{h}\Sigma(\gn,N_j) +\nr(\ga)^{\e}\nr(\gn)^{-1+\e}\right\}\]
uniformly for $|\Re(z)|\le 1$,
where $\Sigma(\gn, N_j)$ is the series defined in \cite[p.3038]{SugiyamaTsuzuki2018}
and estimated 
in the same way as \cite[p.3038--3039]{SugiyamaTsuzuki2018}. Thus we are done.
\end{proof}

Let $\gn$ be a square-free ideal of $\go$ such that $\gn \neq \go$.
For any $f \in C([-2,2]^S)$,
we set
\[\Lambda_{\gn}^{(z)}(f)=\frac{1}{{\rm M}(\gn)^{\delta(z=0)}}\prod_{v \in S(\gn)}\frac{q_v^{(z-1)/2}}{1+q_v^{(z+1)/2}}\sum_{\pi \in \Pi_{\rm cus}(l,\gn)}
W_{\gn}^{(z)}(\pi)\frac{L(\frac{z+1}{2}, \Sym^2(\pi))}{L(1, \Sym^2(\pi))}f(\bfx_S(\pi)),
\]
where
\[{\rm M}(\gn)=\sum_{v \in S(\gn)}\frac{\log q_v}{1+q_v^{-1/2}},\qquad \bfx_S(\pi) = (q_v^{-\nu_v(\pi)/2}+q_v^{\nu_v(\pi)/2})_{v \in S}.\]
Let $\{\ga_j\}_{j=1}^{h}$ be a complete system of representatives
for the ideal class group of $F$ consisting of prime ideals relatively prime to $S$,
due to the Chebotarev density theorem.
We denote by $\zeta_{F, \fin}(z)$ the non-completed Dedekind zeta function of $F$
and ${\rm CT}_{z=1}\zeta_{F,\fin}(z)$ denotes the constant term of the Laurent expansion
of $\zeta_{F,\fin}(z)$ at $z=1$.

For any $\a\in\Acal$,
we denote by $f_\a$ the function $f_\a : [-2,2]^S \rightarrow \CC$ determined by $f_\a((q_v^{-s_v/2}+q_v^{s_v/2})_{v \in S})=\a(\bfs)$.
We refine the weighted equidistribution theorem \cite[Theorem 1.3 (1)]{SugiyamaTsuzuki2018} restricted to the function space $\Acal(\ga)$ quantitatively
by making the dependence on $\ga$ explicit as follows.

\begin{thm}\label{quantitative ver of equidist}
Let $l=(l_v)_{v \in \Sigma_\infty}$ be a family of positive even integers such that $\underline{l}:=\min_{v \in \Sigma_\infty}l_v\ge 4$.
Let $\ga$ be an ideal of the form $\prod_{v \in S}(\gp_v\cap\go)^{n_v}$ with $n_v \in \NN$. Take any $\a\in \Acal(\ga)$.
Let $\gn\neq \go$ be a square-free ideal of $\go$ relatively prime to $2\prod_{j=1}^h \ga_j$ and $S$.
For a fixed $\s \in \RR$ with $0 < \s < \underline{l}-3$,
take $z \in [0, \min(1,\s)]$.
Then, we have 
\begin{align}
& \Lambda_\gn^{(z)}(f_\a) \notag\\
=\, & \zeta_{F,\fin}(1+z)C_l^{(z)}(-1)^{\#S}\Upsilon^{(z)}(\a)
\bigg(1+D_F^{-3z/2}\prod_{v \in S(\gn)} \frac{1+q_v^{\frac{-z+1}{2}}}{1+q_v^{\frac{z+1}{2}}}\bigg)
\label{main of lambda z}\\
&+\|\a\|\Ocal_{\e,\e'}\left( \nr(\ga)^{(\underline{l}/2+d_F-1-(\s+1)/2-\e)/d_F+\e'}\nr(\gn)^{-\delta_1 +\e}
\right), \label{error of lambda z}
\end{align}
for any small $\e,\e'>0$ if $z>0$. Furthermore, we have
{\allowdisplaybreaks\begin{align}
& \Lambda_\gn^{(0)}(f_\a) \notag \\
= &
\frac{1}{{\rm M}(\gn)}
2\,\Res_{z=1}\zeta_{F,\fin}(z) \,C_l^{(0)}
\bigg[(-1)^{\#S}\Upsilon^{(0)}(\a)\bigg\{ d_F\left(-\frac{1}{2}\log 2-\frac{3}{4}\log \pi+\frac{1}{4}\psi\left(\frac{3}{4}\right)\right)
\label{main of lambda 01}\\
& +\frac{1}{2}\sum_{v \in \Sigma_{\infty}}
\psi(l_v-\tfrac{1}{2})\bigg\}
+(-1)^{\#S}\sum_{v \in S}\frac{d}{dz}\bigg|_{z=0}\Upsilon_v^{(z)}(\a)\prod_{\substack{w \in S \\ w\neq v}}\Upsilon_{w}^{(0)}(\a)
\bigg]\label{main of lambda 02}\\
&+\frac{1}{{\rm M}(\gn)}2\,{\rm CT}_{z=1}\zeta_{F, \fin}(z) \,C_l^{(0)}(-1)^{\#S}\Upsilon^{(0)}(\a)\label{main of lambda 03}\\
&+\Res_{z=1}\zeta_{F, \fin}(z)
\left(1 +\frac{3\log D_F}{2{\rm M}(\gn)} \right)C_l^{(0)}(-1)^{\#S}\Upsilon^{(0)}(\a) \label{main of lambda 04}\\
&+\|\a\|\,\Ocal_{\e,\e'}\left(  \nr(\ga)^{(\underline{l}/2+d_F-1-(\s+1)/2-\e)/d_F+\e'}
\frac{\nr(\gn)^{-\delta_1 +\e}}{{\rm M}(\gn)}\right) \label{error of lambda 0}
\end{align}
}for any small $\e,\e'>0$ if $z=0$.
Here the implied constants above are independent of $\gn$ and $\ga$,
and we set $\delta_1= \min(\delta,\delta')-1/2 \in (0,1/2]$, where $\delta$ and $\delta'$
are as in Lemmas \ref{hyperbolic term} and \ref{elliptic term}, respectively.
The function $\psi$ denotes the digamma function.
\end{thm}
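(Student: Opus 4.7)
The plan is to apply the ST trace formula of Theorem \ref{JZtrace}, identify its cuspidal side with $\Lambda_\gn^{(z)}(f_\a)$ up to an explicit prefactor, absorb $\JJ_{\rm hyp}^0$ and $\JJ_{\rm ell}^0$ into the claimed error terms via Lemmas \ref{hyperbolic term} and \ref{elliptic term}, and extract the stated main terms from the unipotent contribution. Since any $\a\in\Acal(\ga)$ is a Laurent polynomial in $q_v^{\pm s_v/2}$ invariant under $s_v\mapsto-s_v$, one has $\a(\nu_S(\pi))=f_\a(\bfx_S(\pi))$ for every cuspidal $\pi$. Comparing the definitions of $\II_{\rm cusp}^0$ and $\Lambda_\gn^{(z)}$ gives
\[
\Lambda_\gn^{(z)}(f_\a)=\frac{2\,D_F^{1/2-z}}{{\rm M}(\gn)^{\delta(z=0)}}\prod_{v\in S(\gn)}\frac{1}{1+q_v^{(z+1)/2}}\,\II_{\rm cusp}^0(\gn|\a,z).
\]
Substituting the explicit value of $C(l,\gn)$ in Theorem \ref{JZtrace} then decomposes $\Lambda_\gn^{(z)}(f_\a)$ into a unipotent, a hyperbolic and an elliptic piece. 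The latter two are bounded directly by Lemmas \ref{hyperbolic term} and \ref{elliptic term}; once the prefactor above (which is $O(\nr(\gn)^{(1-z)/2})$ times $1/{\rm M}(\gn)^{\delta(z=0)}$) is multiplied in, the resulting bound is exactly \eqref{error of lambda z} for $z>0$ and \eqref{error of lambda 0} for $z=0$, with $\delta_1=\min(\delta,\delta')-\tfrac12$.

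The main term comes from $D_F^{z/4}\{\JJ_{\rm unip}^0(\gn|\a,z)+\JJ_{\rm unip}^0(\gn|\a,-z)\}$. For $z\in(0,\min(1,\s)]$, use the functional equation $\zeta_F(-z)=\zeta_F(1+z)=D_F^{(1+z)/2}\Gamma_\RR(1+z)^{d_F}\zeta_{F,\fin}(1+z)$ inside $\JJ_{\rm unip}^0(\gn|\a,z)$. The $\gn$-dependent Euler product $\prod_{v\in S(\gn)}(1+q_v^{(z+1)/2})/(1+q_v)$ occurring in $\JJ_{\rm unip}^0(\gn|\a,z)$ cancels against the prefactor of the first display; bundling the remaining $D_F$-powers and archimedean $\Gamma$-factors into $C_l^{(z)}$ produces the summand $\zeta_{F,\fin}(1+z)\,C_l^{(z)}\,(-1)^{\#S}\Upsilon^{(z)}(\a)$. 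Applying the same procedure to $\JJ_{\rm unip}^0(\gn|\a,-z)$, the mismatch of $\gn$-dependent factors and of the $D_F$-powers yields exactly the extra factor $D_F^{-3z/2}\prod_{v\in S(\gn)}\frac{1+q_v^{(-z+1)/2}}{1+q_v^{(z+1)/2}}$, giving \eqref{main of lambda z}.

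At $z=0$ each summand $\JJ_{\rm unip}^0(\gn|\a,\pm z)$ has a simple pole from the simple pole of $\zeta_F(\mp z)$ at $z=0$, but their sum is holomorphic by the trace formula. Writing $\JJ_{\rm unip}^0(\gn|\a,z)=\zeta_F(-z)\,G(\gn,\a,z)$ with $G$ analytic at $0$, and expanding $\zeta_F(-z)=R/z+c_0+O(z)$, $\zeta_F(z)=-R/z+c_0+O(z)$ (using $\Res_{s=0}\zeta_F(s)=-R$ with $R=\Res_{s=1}\zeta_F(s)$ and $c_0={\rm CT}_{s=0}\zeta_F(s)$), the singular parts cancel and
\[
\lim_{z\to0}\bigl\{\JJ_{\rm unip}^0(\gn|\a,z)+\JJ_{\rm unip}^0(\gn|\a,-z)\bigr\}=2R\,G'(\gn,\a,0)+2c_0\,G(\gn,\a,0).
\]
Differentiating $G$ at $0$ contributes: (i) $\sum_{v\in S}\frac{d}{dz}\Upsilon_v^{(z)}(\a)|_{z=0}\prod_{w\neq v}\Upsilon_w^{(0)}(\a)$, giving line \eqref{main of lambda 02}; (ii) differentiating $D_F^{-(z+2)/4}\prod_{v\in\Sigma_\infty}2^{1-z}\pi^{(3-z)/4}\Gamma(l_v+(z-1)/2)/(\Gamma((z+1)/4)\Gamma(l_v))$ at $z=0$, producing the $-\tfrac12\log 2,\,-\tfrac34\log\pi,\,\tfrac14\psi(\tfrac34),\,\tfrac12\psi(l_v-\tfrac12)$ contributions of line \eqref{main of lambda 01}; (iii) differentiating the outer $D_F^{z/4}$ together with $D_F^{-3z/2}$ (from the transfer of $\zeta_F(z)$ to $\zeta_{F,\fin}(1+z)$) produces the $\tfrac{3}{2}\log D_F$ tail of line \eqref{main of lambda 04}; and (iv) differentiating the $\gn$-dependent product $\prod_{v\in S(\gn)}(1+q_v^{(z+1)/2})/(1+q_v)$ generates precisely the combination ${\rm M}(\gn)$, which is what cancels the $1/{\rm M}(\gn)$ normalization, leaving lines \eqref{main of lambda 01}--\eqref{main of lambda 03} divided by ${\rm M}(\gn)$ and line \eqref{main of lambda 04} undivided. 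Finally, $c_0$ and $R$ are translated into ${\rm CT}_{z=1}\zeta_{F,\fin}(z)$ and $\Res_{z=1}\zeta_{F,\fin}(z)$ via $\zeta_F(s)=D_F^{s/2}\Gamma_\RR(s)^{d_F}\zeta_{F,\fin}(s)$ and the functional equation; this yields the remaining factor of $2$ in \eqref{main of lambda 01}--\eqref{main of lambda 03} and the final form of \eqref{main of lambda 04}.

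The main obstacle is the $z=0$ analysis: one must simultaneously verify cancellation of the simple poles at $z=0$, perform a clean Laurent expansion of every factor entering $G(\gn,\a,z)$, and distinguish which derivatives are $\gn$-dependent (hence generate ${\rm M}(\gn)$ after differentiation of the $S(\gn)$-Euler product) and which are not. In particular, the asymmetric appearance of line \eqref{main of lambda 04} without a $1/{\rm M}(\gn)$-factor, and the cross-term $\tfrac{3\log D_F}{2{\rm M}(\gn)}$ that does carry such a factor, both arise from the same bookkeeping of where the derivative lands inside $D_F^{z/4}\cdot D_F^{-3z/2}\cdot\prod_{S(\gn)}$.
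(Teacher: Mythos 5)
Your proposal follows essentially the same route as the paper's proof: normalize $\II_{\rm cusp}^0$ into $\Lambda_\gn^{(z)}(f_\a)$ via Theorem \ref{JZtrace}, absorb $\JJ_{\rm hyp}^0$ and $\JJ_{\rm ell}^0$ into the error terms by Lemmas \ref{hyperbolic term} and \ref{elliptic term}, and extract the main terms from the two unipotent pieces, with the $z=0$ case handled by exactly the same pole-cancelling Laurent expansion (the paper packages your items (iii)--(iv) into the single coefficient $k_1=-\tfrac{3}{2}\log D_F-{\rm M}(\gn)$ of the factor $D_F^{-3z/2}\prod_{v\in S(\gn)}\frac{1+q_v^{(1-z)/2}}{1+q_v^{(1+z)/2}}$). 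The only caveats are bookkeeping slips that wash out in your final translation step: for the completed zeta normalized by $\zeta_F(s)=D_F^{1/2-s}\zeta_F(1-s)$ one has $\Res_{s=0}\zeta_F(s)=-D_F^{1/2}\Res_{s=1}\zeta_F(s)$ rather than $-\Res_{s=1}\zeta_F(s)$, the outer $D_F^{z/4}$ sits in the prefactor (which, being analytic, is never differentiated in your scheme, so the $\log D_F$ contributions instead come from the functional-equation translation of the constant term at $s=0$ and from the factors inside $G$), and the factor $2$ in \eqref{main of lambda 01}--\eqref{main of lambda 03} already arises from the symmetric sum $2RG'(0)+2c_0G(0)$, not from the passage to $\zeta_{F,\fin}$.
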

\begin{proof}
 By the explicit Jacquet-Zagier type trace formula in Theorem \ref{JZtrace}, we have
{\allowdisplaybreaks\begin{align*}\Lambda_{\gn}^{(z)}(f_\a)
= & 2D_{F}^{1/2-z}\frac{1}{{\rm M}(\gn)^{\delta(z=0)}}\bigg(\prod_{v \in S(\gn)}\frac{1}{1+q_v^{\frac{z+1}{2}}}\bigg)
\II_{\rm cusp}^{0}(\gn| \a ,z) \\ 
= & \frac{2D_{F}^{1/2-z}}{{\rm M}(\gn)^{\delta(z=0)}}
\bigg(\prod_{v \in S(\gn)}\frac{1}{1+q_v^{\frac{z+1}{2}}}\bigg)(-1)^{\# S}C(l,\gn)^{-1}\\
&\times \bigg[ D_F^{z/4}\{\JJ_{\rm unip}^0(\gn|\a, z) +\JJ_{\rm unip}^0(\gn|\a, -z) \}
+\JJ_{\rm hyp}^{0}(\gn|\a, z) +\JJ_{\rm ell}^0(\gn | \a, z) \bigg].
\end{align*}
}Let us evaluate the unipotent terms $\JJ_{\rm unip}^{0}(\gn|\a,\pm z)$.
When $0<z<1$, we obtain
\begin{align*}
& \frac{2D_{F}^{1/2-z}}{{\rm M}(\gn)^{\delta(z=0)}}
\bigg(\prod_{v \in S(\gn)}\frac{1}{1+q_v^{\frac{z+1}{2}}}\bigg)(-1)^{\# S}C(l,\gn)^{-1} D_F^{z/4}\JJ_{\rm unip}^0(\gn|\a, z) \\
= & \zeta_{F,\fin}(1+z) C_l^{(z)} \, (-1)^{\#S}\Upsilon^{(z)}(\a)
\end{align*}
and
\begin{align*}
	& \frac{2D_{F}^{1/2-z}}{{\rm M}(\gn)^{\delta(z=0)}}\bigg(\prod_{v \in S(\gn)} \frac{1}{1+q_v^{\frac{z+1}{2}}}\bigg)(-1)^{\# S}C(l,\gn)^{-1} D_F^{z/4}\JJ_{\rm unip}^0(\gn|\a, -z) \\
	= & \bigg(\prod_{v \in S(\gn)} \frac{1+q_v^{\frac{-z+1}{2}}}{1+q_v^{\frac{z+1}{2}}}\bigg) 
	D_F^{-3z/2}
	\times \zeta_{F,\fin}(1-z)C_l^{(-z)} \times(-1)^{\#S}\Upsilon^{(-z)}(\a).
\end{align*}
Next consider the case $z=0$. By the expression
\[D_F^{-3z/2}\prod_{v \in S(\gn)}\frac{1+q_v^{\frac{-z+1}{2}}}{1+q_v^{\frac{z+1}{2}}}= 1+k_1z+\Ocal(z^2),\]
\[\zeta_{F,\fin}(1+z)=\frac{c_{-1}}{z}+c_0+\Ocal(z),\]
\[C_l^{(z)}(-1)^{\#S}\Upsilon^{(z)}(\a) = a_0+a_1z+\Ocal(z^2),\]
with
\[a_0=C_l^{(0)}(-1)^{\#S}\Upsilon^{(0)}(\a),\]
\begin{align*}a_1 = & \, C_l^{(0)} (-1)^{\#S}\Upsilon^{(0)}(\a) \times \bigg[ d_F\left\{-\frac{1}{2}\log 2-\frac{3}{4}\log \pi+\frac{1}{4}\psi\left(\frac{3}{4}\right)\right\} \\
& +\frac{1}{2}\sum_{v \in \Sigma_{\infty}}
\psi(l_v-\tfrac{1}{2})+\sum_{v \in S}\frac{\frac{d}{dz}|_{z=0}\Upsilon_v^{(z)}(\a)}{\Upsilon_v^{(z)}(\a)}
\bigg],
\end{align*}
\[k_1= -\frac{3}{2}\log D_F -{\rm M}(\gn),\]
we evaluate the unipotent terms as
\begin{align*}
&\bigg[2D_{F}^{1/2-z}\frac{1}{{\rm M}(\gn)^{\delta(z=0)}}
\bigg(\prod_{v \in S(\gn)}\frac{1}{1+q_v^{\frac{z+1}{2}}}\bigg)(-1)^{\# S}C(l,\gn)^{-1} \\
&\times D_F^{z/4}\{\JJ_{\rm unip}^0(\gn|\a, z)+\JJ_{\rm unip}^0(\gn|\a,-z)\}\bigg]\bigg|_{z=0}
= \, \frac{1}{{\rm M}(\gn)}\{2(c_{-1}a_1+c_0a_0)-k_1c_{-1}a_0\}.
\end{align*}
By combining these with Lemmas \ref{hyperbolic term} and \ref{elliptic term},
we are done.
\end{proof}
\begin{rem}{\rm 
	We need to correct \cite[Theorem 1.3]{SugiyamaTsuzuki2018} as follows.
	In the assertion, \\
	$\sup_{z \in [0,\min(1,\sigma)]}$ is considered.
	However, this should be replaced with $\sup_{z \in \{0\}\cup[\e, \min(1,\sigma)] }$
	with any fixed $\e \in (0,\min(1,\sigma))$,
	since $\JJ_{\rm unip}^{0}(\gn | \a,-z)$ for $z\neq 0$ yields the error term
	\\
	$\zeta_{F, \fin}(1-z)\nr(\gn)^{-z+\e}\Upsilon^{(-z)}(\a)$, which is not bounded in $z\in (0,1]$.}
\end{rem}

We note the identity
$(-1)^{\#S}\Upsilon^{(z)}(\a)= \langle \otimes_{v \in S}\l_v^{(z)}, f_\a \rangle$,
where $\otimes_{v \in S}\lambda_v^{(z)}$ is the measure on $[-2,2]^S$
defined in \cite[\S1.2.1]{SugiyamaTsuzuki2018}.
Indeed, we have the following.

\begin{lem}\label{explicit unip int}
	Let $z$ be a real number such that $z \in [-1,1]$.
For $m_v \in \NN_0$ and $\a_v(s) = X_{m_v}(q_v^{-s/2}+q_v^{s/2})$,
we have
\[-\Upsilon_v^{(z)}(\a_v) = \delta(m_v\in 2\NN_0)\,q_v^{-m_v(z+1)/4}.\]
\end{lem}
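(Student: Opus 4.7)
The plan is to reduce the integral defining $\Upsilon_v^{(z)}(\a_v)$ to a residue computation on a small circle around the origin, and then read off the answer by expanding into power series.

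First, I would simplify the integrand. Using
\[
q_v^{-(s+1)/2}\bigl(q_v^{(1+s)/2}-q_v^{(1-s)/2}\bigr)= 1-q_v^{-s},
\]
the defining expression collapses to
\[
\Upsilon_v^{(z)}(\a_v)=\frac{1}{2\pi i}\int_{c_v-2\pi i(\log q_v)^{-1}}^{c_v+2\pi i(\log q_v)^{-1}} \frac{-(1-q_v^{-s})}{1-q_v^{-s-(z+1)/2}}\,\a_v(s)\,\frac{\log q_v}{2}\,ds.
\]

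Next, I would substitute $u=q_v^{-s/2}$, so that $u^2=q_v^{-s}$ and $ds=-\tfrac{2}{u\log q_v}\,du$. As $s$ traverses the vertical segment of length $4\pi/\log q_v$ (a fundamental period of $\a_v$), $u$ traces the circle $|u|=q_v^{-c_v/2}$ exactly once clockwise. With $\a_v(s)=X_{m_v}(q_v^{-s/2}+q_v^{s/2})=X_{m_v}(u+u^{-1})$, the integral becomes
\[
\Upsilon_v^{(z)}(\a_v)=-\frac{1}{2\pi i}\oint_{|u|=q_v^{-c_v/2}} \frac{1-u^2}{1-u^2 q_v^{-(z+1)/2}}\cdot\frac{X_{m_v}(u+u^{-1})}{u}\,du,
\]
where the contour is now oriented counterclockwise (accounting for the sign flip from the clockwise traversal).

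Then I would apply the residue theorem. Since $c_v$ is taken sufficiently large, the only poles of the integrand inside the circle $|u|=q_v^{-c_v/2}$ are at $u=0$; the poles from the denominator $1-u^2 q_v^{-(z+1)/2}$ lie at $u=\pm q_v^{(z+1)/4}$, which are outside the contour for $z\in[-1,1]$. Using the Chebyshev-like identity $X_{m_v}(u+u^{-1})=\sum_{k=0}^{m_v}u^{m_v-2k}$ together with the geometric expansion
\[
\frac{1-u^2}{1-u^2 q_v^{-(z+1)/2}}=(1-u^2)\sum_{j\ge 0}u^{2j}q_v^{-j(z+1)/2},
\]
I would extract the coefficient of $u^{-1}$. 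Because all exponents appearing are of the same parity as $m_v-1$, the residue vanishes when $m_v$ is odd. When $m_v=2M$ is even, a telescoping of the two contributions (from the $1$ and the $-u^2$ in the numerator) yields
\[
\mathrm{Res}_{u=0}=\sum_{i=0}^{M}q_v^{-i(z+1)/2}-\sum_{i=0}^{M-1}q_v^{-i(z+1)/2}=q_v^{-M(z+1)/2}=q_v^{-m_v(z+1)/4}.
\]
Combining with the overall minus sign in front of the contour integral gives $-\Upsilon_v^{(z)}(\a_v)=\delta(m_v\in 2\NN_0)\,q_v^{-m_v(z+1)/4}$, as claimed.

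The only real subtlety is bookkeeping: tracking the orientation after the change of variables, verifying that the non-trivial poles of the integrand are indeed outside the contour for $z\in[-1,1]$ and $c_v$ large, and recognising the parity telescoping that produces a single surviving term. None of these steps requires heavy estimation, so I expect the computation to be purely algebraic once the contour-to-residue reduction is carried out.
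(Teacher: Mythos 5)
Your proposal is correct, and it reaches the conclusion by a genuinely different route from the paper. The paper's proof sets $c_v=0$, folds the contour using the evenness $\a_v(-s)=\a_v(s)$, and changes variables $x=q_v^{-s/2}+q_v^{s/2}$ so that $\Upsilon_v^{(z)}(\a_v)$ becomes (minus) the integral of $X_{m_v}$ over $[-2,2]$ against the measure $\frac{q+1}{2\pi}\frac{\sqrt{4-x^2}}{(q^{1/2}+q^{-1/2})^2-x^2}\,dx$ with $q=q_v^{(z+1)/2}$; it then quotes Serre's moment computation (\cite{Serre}, \S 2.3), which gives exactly $\delta(m_v\in 2\NN_0)\,q^{-m_v/2}$. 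You instead keep $c_v$ large (as in the definition), substitute $u=q_v^{-s/2}$ to land on a circle of radius $q_v^{-c_v/2}$, and compute the residue at $u=0$ via $X_{m_v}(u+u^{-1})=\sum_{k=0}^{m_v}u^{m_v-2k}$ and the geometric expansion of $(1-u^2q_v^{-(z+1)/2})^{-1}$; your sign and orientation bookkeeping, the location of the poles $u=\pm q_v^{(z+1)/4}$ outside the contour for $z\in[-1,1]$, and the parity telescoping are all correct. What your approach buys is self-containedness (no external reference) and the avoidance of the contour shift to $c=0$ implicit in the paper's proof, which requires noting that the zeros of $1-q_v^{-s-(z+1)/2}$ lie on $\Re(s)=-(z+1)/2\le 0$ (and at $z=-1$ sit on the line $\Re(s)=0$ but are cancelled by the factor $q_v^{(1+s)/2}-q_v^{(1-s)/2}$). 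What the paper's route buys is the conceptual identification of $(-1)^{\#S}\Upsilon^{(z)}(\a)$ with the pairing of $f_\a$ against the Plancherel-type measures $\l_v^{(z)}$, which is the interpretation the surrounding text emphasizes; your residue calculation is the purely algebraic shadow of that same fact.
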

\begin{proof}
It follows from a direct computation. For the reader, we show the detail as follows.
By taking $c_v=c=0$ and noting $\a_v(-s)=\a_v(s)$, we have
{\allowdisplaybreaks	\begin{align*}
	\Upsilon_v^{(z)}(\a_v) = & \frac{1}{2\pi i}\left(\int_{0i}^{\frac{2 \pi}{\log q_v}i}+\int_{-\frac{2\pi}{\log q_v}i}^{0i}\right)\frac{-q_v^{-s/2}q_v^{-1/2}}{1-q_v^{-s}q_v^{-(z+1)/2}}\a_v(s)\times\frac{\log q_v}{2}(q_v^{(1+s)/2}-q_v^{(1-s)/2})ds \\
	= & \frac{1}{2\pi i}\int_{0i}^{\frac{2 \pi}{\log q_v}i}\left(\frac{-q_v^{-s/2}q_v^{-1/2}}{1-q_v^{-s}q_v^{-(z+1)/2}}+\frac{q_v^{-s/2}q_v^{-1/2}}{1-q_v^{s}q_v^{-(z+1)/2}}\right)\a_v(s)\\
	& \times \frac{\log q_v}{2}(q_v^{(1+s)/2}-q_v^{(1-s)/2})ds\\
	=&- \frac{1}{2\pi i}\int_{-2}^{2}\frac{(1+q_v^{(z+1)/2})i\sqrt{4-x^2}}{(q_v^{(z+1)/4}+q_v^{-(z+1)/4})^2-x^2}\,X_{m_v}(x)dx.
	\end{align*}
}The last equality is deduced by the variable change $x = q_v^{-s/2} + q_v^{s/2}$.
	By applying \cite[\S2.3]{Serre} for $q=q_v^{\frac{z+1}{4}}$, we obtain the assertion.
\end{proof}

Define $f_\ga \in C([-2,2]^S)$ by $f_\ga((x_v)_{v \in S}) = \prod_{v \in S}X_{n_v}(x_v)$
for an integral ideal $\ga=\prod_{v \in S}(\gp_v\cap\go)^{n_v}$.
By Theorem \ref{quantitative ver of equidist}, we have the following.
\begin{cor}\label{equidist of Lambda}
	Fix any $\s \in (0,1]$ and suppose $\underline{l}>\s+3$.
For $0<z\le 1$, we have
\begin{align*}
\frac{\Lambda_{\gn}^{(z)}(f_\ga)}{\Lambda_{\gn}^{(z)}(1)}
= & \delta(\ga=\square) \nr(\ga)^{(-z-1)/4}
+\Ocal_{\e,\e'}(\nr(\ga)^{(\underline{l}/2+d_F-1-(\s+1)/2)/d_F-\e/d_F+\e'}\nr(\gn)^{-\delta_1+\e})
\end{align*}
for small $\e,\e'>0$, where the implied constant is independent of $\gn$, $\ga$ and $z \in (0,1]$.

When $z=0$, we have
	\begin{align*}
\frac{\Lambda_{\gn}^{(0)}(f_\ga)}{\Lambda_{\gn}^{(0)}(1)}
=\, &
\delta(\ga=\square)\nr(\ga)^{-1/4}-\frac{1}{2}(\log \nr(\ga))
\delta(\ga=\square)\nr(\ga)^{-1/4}C_l^{(0)} \frac{\Res_{z=1}\zeta_{F,\fin}(z)}{{\rm M}(\gn)D(\gn)}\\
& +\Ocal_{\e,\e'}\bigg(\nr(\ga)^{(\underline{l}/2+d_F-1-(\s+1)/2)/d_F-\e/d_F+\e'}\frac{\nr(\gn)^{-\delta_1+\e}}{{\rm M}(\gn)}\bigg),
\end{align*}
where
\begin{align*} D(\gn) := & \frac{1}{{\rm M}(\gn)}
2\,\Res_{z=1}\zeta_{F,\fin}(z) \,C_l^{(0)}
\bigg\{ d_F\left(-\frac{1}{2}\log 2-\frac{3}{4}\log \pi+\frac{1}{4}\psi\left(\frac{3}{4}\right)\right) \\
&+\frac{1}{2}\sum_{v \in \Sigma_{\infty}} \psi(l_v-\tfrac{1}{2})\bigg\} \\
&+\frac{1}{{\rm M}(\gn)}2\,{\rm CT}_{z=1}\zeta_{F, \fin}(z) \,C_l^{(0)}
+\Res_{z=1}\zeta_{F, \fin}(z)
\left(1 +\frac{3\log D_F}{2{\rm M}(\gn)} \right)C_l^{(0)}.
\end{align*}
\end{cor}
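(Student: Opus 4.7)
The plan is to apply Theorem \ref{quantitative ver of equidist} twice---once with $\a$ the test function corresponding to $f_\ga$, and once with the constant $\a \equiv 1$ for the denominator---and then divide the two asymptotic expansions. First I would unpack $\Upsilon^{(z)}(\a)$ explicitly: writing $\a(\bfs) = \prod_{v \in S} X_{n_v}(q_v^{-s_v/2}+q_v^{s_v/2})$, Lemma \ref{explicit unip int} yields $-\Upsilon_v^{(z)}(\a_v) = \delta(n_v \in 2\NN_0)\,q_v^{-n_v(z+1)/4}$, whence
\[(-1)^{\#S}\Upsilon^{(z)}(\a) \;=\; \delta(\ga = \square)\,\nr(\ga)^{-(z+1)/4}, \qquad (-1)^{\#S}\Upsilon^{(z)}(1) \;=\; 1.\]
Lemma \ref{esti of alpha} furnishes $\|\a\| \ll_{\e} \nr(\ga)^{\e}$ while $\|1\|=1$.

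For $z \in (0,1]$, substituting the above into Theorem \ref{quantitative ver of equidist} yields
\begin{align*}
\Lambda_\gn^{(z)}(f_\ga) &= \zeta_{F,\fin}(1+z)\,C_l^{(z)}\,\delta(\ga=\square)\,\nr(\ga)^{-(z+1)/4}\,\Xi_\gn(z) \;+\; R_1,\\
\Lambda_\gn^{(z)}(1) &= \zeta_{F,\fin}(1+z)\,C_l^{(z)}\,\Xi_\gn(z) \;+\; R_2,
\end{align*}
where $\Xi_\gn(z) := 1 + D_F^{-3z/2}\prod_{v \in S(\gn)}\tfrac{1+q_v^{(-z+1)/2}}{1+q_v^{(z+1)/2}} \geq 1$, and both $R_1, R_2$ are controlled by the error bound \eqref{error of lambda z}. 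Since $\zeta_{F,\fin}(1+z) \geq c > 0$ uniformly on $(0,1]$ (its only pole in this vicinity lies at $z=0$), the denominator is bounded below, so direct division gives the asserted identity. The factor $\nr(\ga)^{\e}$ coming from $\|\a\|$ is absorbed into the $\nr(\ga)^{\e'}$ exponent after relabelling; the cross contribution $\mathrm{Main}(f_\ga) \cdot R_2 / \Lambda_\gn^{(z)}(1)^2$ is harmless because $\mathrm{Main}(f_\ga) = O(\nr(\ga)^{-(z+1)/4})$.

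For $z = 0$, the additional input is the $z$-derivative of $\Upsilon_v^{(z)}(\a_v)$; differentiating Lemma \ref{explicit unip int} gives
\[\frac{d}{dz}\bigg|_{z=0}\Upsilon_v^{(z)}(\a_v) = \delta(n_v \in 2\NN_0)\,\tfrac{n_v \log q_v}{4}\,q_v^{-n_v/4}.\]
For $\a \equiv 1$ this vanishes since $\Upsilon_v^{(z)}(1) = -1$ is constant in $z$, so line \eqref{main of lambda 02} is absent and $\Lambda_\gn^{(0)}(1) = D(\gn) + O(\nr(\gn)^{-\delta_1+\e}/{\rm M}(\gn))$. For the $\a$ associated with $f_\ga$, plugging the derivative into \eqref{main of lambda 02} and using $\prod_{w \neq v}\Upsilon_w^{(0)}(\a_w) = (-1)^{\#S-1}\prod_{w\neq v}\delta(n_w \in 2\NN_0)q_w^{-n_w/4}$, the resulting sum collapses (being nonzero only when $\ga = \square$) to $-\delta(\ga = \square)\nr(\ga)^{-1/4}\cdot\tfrac{\log \nr(\ga)}{4}$ after the overall factor $(-1)^{\#S}$. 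Combining this with the contributions of \eqref{main of lambda 01}, \eqref{main of lambda 03}, \eqref{main of lambda 04} to $\Lambda_\gn^{(0)}(f_\ga)$---each of which multiplies $(-1)^{\#S}\Upsilon^{(0)}(\a) = \delta(\ga=\square)\nr(\ga)^{-1/4}$ by the very factors appearing in $D(\gn)$---and dividing by $\Lambda_\gn^{(0)}(1)$ produces the stated formula.

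The principal subtlety is ensuring uniform positivity of the denominators $\Lambda_\gn^{(z)}(1)$ in both $z$ and $\gn$, so that the error bounds of Theorem \ref{quantitative ver of equidist} survive the division without growth. For $z > 0$ the pole of $\zeta_{F,\fin}$ at $z = 0$ actually aids the bound as $z \to 0^+$ (making the denominator larger), while for $z = 0$ one relies on the nontriviality of the limit $D(\gn) \to \Res_{z=1}\zeta_{F,\fin}(z)\cdot C_l^{(0)} > 0$ as $\nr(\gn) \to \infty$, together with the vanishing of the derivative of $\Upsilon_v^{(z)}(1)$ at $z = 0$ that kills line \eqref{main of lambda 02} in the denominator expansion.
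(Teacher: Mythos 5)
Your proposal is correct and follows essentially the same route as the paper: apply Theorem \ref{quantitative ver of equidist} to both $f_\ga$ and the constant function, make the main terms explicit via Lemma \ref{explicit unip int} (and its $z$-derivative, which indeed collapses to $-\delta(\ga=\square)\nr(\ga)^{-1/4}\tfrac{\log\nr(\ga)}{4}$), then divide and absorb the cross terms and the $\|\a\|\ll_\e\nr(\ga)^\e$ factor into the error, using the uniform lower bound on the denominator's main term (and $D(\gn)\to\Res_{z=1}\zeta_{F,\fin}(z)\,C_l^{(0)}$ for $z=0$). This is exactly the Knightly--Reno-style decomposition $\frac{F_\ga+E_\ga}{F_\go+E_\go}=\frac{F_\ga}{F_\go}+\frac{E_\ga-(F_\ga/F_\go)E_\go}{F_\go+E_\go}$ used in the paper, so no further comment is needed.
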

\begin{proof}
We follow the proof of \cite[Proposition 3.1]{KnightlyReno}.
Denote by $F_\ga$ the main term \eqref{main of lambda z}
(resp.\ the sum of \eqref{main of lambda 01}, \eqref{main of lambda 02}, \eqref{main of lambda 03} and \eqref{main of lambda 04}) for $z \neq 0$
(resp.\ $z=0$)
in Theorem \ref{quantitative ver of equidist}, and put $E_\ga=\Lambda_{\gn}(f_\ga)-F_\ga$.
Then, we see \begin{align*}	\frac{\Lambda_\gn^{(z)}(f_\ga)}{\Lambda_{\gn}^{(z)}(1)}=\frac{F_\ga+E_\ga}{F_\go+E_\go}=\frac{F_\ga}{F_\go}+\frac{E_\ga-\tfrac{F_\ga}{F_\go}E_\go}{F_\go+E_\go}.\end{align*}
The first term above yields the main term of the assertion.
Indeed, for $z\neq0$, The explicit form of the main term in the assertion is given by Lemma \ref{explicit unip int}. We estimate the second term of the assertion as
\begin{align*} & \frac{E_\ga-\tfrac{F_\ga}{F_\go}E_\go}{F_\go +E_\go}
\ll \frac{E_\ga+\delta(\ga=\square)\nr(\ga)^{(-z-1)/4}E_\go}{F_\go +E_\go} \\
\ll_{\e,\e'}\, & \frac{1}{F_\go+E_\go}
(\nr(\ga)^{(\underline{l}/2+d_F-1-(\s+1)/2)/d_F-\e/d_F+\e'}\nr(\gn)^{-\delta_1+\e}
+\delta(\ga=\square)\nr(\ga)^{(-z-1)/4}\nr(\gn)^{-\delta_1+\e}
) \\
\ll \, & \nr(\ga)^{(\underline{l}/2+d_F-1-(\s+1)/2)/d_F-\e/d_F+\e'}\nr(\gn)^{-\delta_1+\e},
\end{align*}
where $\| \a \|$ in the error term \eqref{error of lambda z} is negligible by Lemma \ref{esti of alpha}.
This completes the proof for $z\neq 0$.
Next consider the case $z=0$. The first term $\frac{F_\ga}{F_\go}$ is similarly evaluated as
{\allowdisplaybreaks\begin{align*}
&\frac{D(\gn)(-1)^{\#S}\Upsilon^{(0)}(\a)+\frac{2}{{\rm M}(\gn)}\Res_{z=1}\zeta_{F,\fin}(z)C_l^{(0)}(-1)^{\#S}\sum_{v\in S}\frac{d}{dz}\big|_{z=0}\Upsilon_v^{(z)}(\a)\prod_{\substack{w\in S \\ w\neq v}}\Upsilon_w^{(0)}(\a)}
{D(\gn)}
\\
=  \,&
\delta(\ga=\square)\nr(\ga)^{-1/4}-\frac{1}{4}(\log \nr(\ga))
\delta(\ga=\square)\nr(\ga)^{-1/4}C_l^{(0)} \frac{2\Res_{z=1}\zeta_{F,\fin}(z)}{{\rm M}(\gn)D(\gn)}.
\end{align*}
}Here we take $\a\in \Acal(\ga)$ such that $f_\ga=f_\a$. By
$\lim_{\nr(\gn)\rightarrow \infty}(F_\go + E_\go) = \Res_{z=1}\zeta_{F,\fin}(z) C_l^{(0)}$, the error term is estimated as
\begin{align*} \frac{E_\ga-\tfrac{F_\ga}{F_\go}E_\go}{F_\go +E_\go}
\ll_{\e} & \frac{E_\ga+\delta(\ga=\square)\nr(\ga)^{-1/4+\e}E_\go}{F_\go +E_\go}
\ll_{\e} \nr(\ga)^{c+\e}\frac{\nr(\gn)^{-\delta_1+\e}}{{\rm M}(\gn)}.
\end{align*}
Here $\| \a \|$ in \eqref{error of lambda 0} is negligible by Lemma \ref{esti of alpha}. This completes the proof for $z=0$.
\end{proof}

From now, $\gn$ is assumed to be a prime ideal $\gq$ of $\go$
relatively prime to $2\prod_{j=1}^{h}\ga_j$ and $S$.

\begin{lem}\label{esti of old forms}
For $z\in [0,1]$, we have
\begin{align*}I_{\gq}^{(z)}(f_\ga):= & \frac{1}{{\rm M}(\gq)^{\delta(z=0)}}\frac{\nr(\gq)^{(z-1)/2}}{1+\nr(\gq)^{(z+1)/2}}\sum_{\pi \in \Pi_{\rm cus}(l,\go)}W_\gq^{(z)}(\pi)\frac{L(\frac{z+1}{2}, \Sym^2(\pi))}{L(1,\Sym^2(\pi))}f_\ga(\bfx_S(\pi))\\
\ll_{\e} \,& \nr(\gq)^{(-1-z)/2} \nr(\ga)^\e,
\end{align*}
for any $\e>0$, where the implied constant is independent of $\ga$ and $\gq$.
\end{lem}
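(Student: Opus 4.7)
The strategy is to estimate each factor appearing in $I_{\gq}^{(z)}(f_\ga)$ separately, exploiting the crucial fact that $\Pi_{\rm cus}(l,\go)$ is a \emph{fixed} finite set whose cardinality depends only on $F$ and $l$, hence is $O(1)$ as $\gq$ varies.

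First I would observe that for every $\pi \in \Pi_{\rm cus}(l,\go)$ the conductor satisfies $\gf_\pi = \go$, so $\gq\gf_\pi^{-1}=\gq$ and $S(\gq\gf_\pi^{-1})$ consists of the single place $v_\gq$ dividing $\gq$. Thus
\[
W_{\gq}^{(z)}(\pi) = \nr(\gq)^{(1-z)/2}\left\{1 + \frac{Q(I_{v_\gq}(|\cdot|_{v_\gq}^{z/2})) - Q(\pi_{v_\gq})^2}{1 - Q(\pi_{v_\gq})^2}\right\}.
\]
By Blasius's bound one has $|q_{v_\gq}^{\nu_{v_\gq}(\pi)/2}|=1$, so $|Q(\pi_{v_\gq})| \le 2/(q_{v_\gq}^{1/2}+q_{v_\gq}^{-1/2})<1$ is bounded away from $1$, and a direct computation gives $|Q(I_{v_\gq}(|\cdot|^{z/2}_{v_\gq}))|\le 1$ uniformly for $z\in[0,1]$. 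Consequently $W_{\gq}^{(z)}(\pi) \ll \nr(\gq)^{(1-z)/2}$ with an implied constant independent of $\gq$ and $z$.

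Next, the ratio $L(\tfrac{z+1}{2},\Sym^2(\pi))/L(1,\Sym^2(\pi))$ is a continuous function of $z\in[0,1]$ depending only on $\pi$; since $\Pi_{\rm cus}(l,\go)$ is finite, this ratio is bounded uniformly in $\pi$ and $z$. For the test-function factor, using $\sup_{x\in[-2,2]}|X_n(x)|\ll n+1$, I get
\[
|f_\ga(\bfx_S(\pi))| \le \prod_{v\in S}|X_{n_v}(x_v)| \ll \prod_{v\in S}(n_v+1) \ll_\e \nr(\ga)^\e,
\]
as in Lemma \ref{esti of alpha}. Assembling these pieces and using $1+\nr(\gq)^{(z+1)/2} \gg \nr(\gq)^{(z+1)/2}$ for $\gq$ of sufficiently large norm gives
\[
I_{\gq}^{(z)}(f_\ga) \ll_\e \#\Pi_{\rm cus}(l,\go) \cdot \frac{\nr(\gq)^{(z-1)/2}}{\nr(\gq)^{(z+1)/2}}\cdot \nr(\gq)^{(1-z)/2}\cdot \nr(\ga)^\e = O_\e\bigl(\nr(\gq)^{(-1-z)/2}\nr(\ga)^\e\bigr),
\]
and the additional factor $\mathrm{M}(\gq)^{-1}$ present only in the case $z=0$ merely improves the estimate, so the bound is uniform over $z\in[0,1]$.

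There is no real obstacle here: the argument is a routine bookkeeping exercise once one recognizes that $\Pi_{\rm cus}(l,\go)$ is a fixed finite set, and the only non-trivial analytic ingredient is Blasius's bound to ensure $|Q(\pi_{v_\gq})|$ stays bounded away from $1$ so that the denominator $1-Q(\pi_{v_\gq})^2$ does not degenerate.
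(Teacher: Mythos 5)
Your proof is correct and follows essentially the same route as the paper: bound $W_{\gq}^{(z)}(\pi)\ll \nr(\gq)^{(1-z)/2}$ (the paper gets the constant $2$ by rewriting the bracket as $2-\frac{1-Q(I(|\cdot|_\gq^{z/2}))}{1-Q(\pi_\gq)^2}$, while you bound numerator and denominator via Blasius, which is equivalent in substance), use $\sup_{x\in[-2,2]}|X_m(x)|\ll m+1$ for the test function, and absorb the fixed finite set $\Pi_{\rm cus}(l,\go)$ and the bounded $L$-value ratios into the implied constant.
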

\begin{proof}
It follows from the inequality
\[0<W_\gq^{(z)}(\pi)= \nr(\gq)^{(1-z)/2}\left\{2-\frac{1-Q(I(|\cdot|_{\gq}^{z/2}))}{1-Q(\pi_\gq)^2}\right\}\le 2 \,\nr(\gq)^{(1-z)/2}\]
for any $\pi \in \Pi_{\rm cus}(l,\go)$ and the estimate $\sup_{x \in [-2,2]}|X_m(x)|\ll m+1$.
\end{proof}
Let us define
$\Lambda_\gq^{(z),*}(f)$ for any $f \in C([-2,2]^S)$
similarly to $\Lambda_\gq^{(z)}(f)$ but restricting the range of the summation from $\Pi_{\rm cus}(l,\gq)$
to $\Pi_{\rm cus}^*(l,\gq)$.
See \S\ref{Densities weighted by special values of symmetric square $L$-functions} for the definition of $\Pi_{\rm cus}^*(l,\gq)$.
\begin{cor}\label{equidist of lambda new part}
	Let $\ga=\prod_{v \in S}(\gp_v\cap\go)^{m_v}$ be an ideal of $\go$.
	Let $\gq$ vary in the set of prime ideals of $\go$ relatively prime to
	$2\ga\prod_{j=1}^{h}\ga_j$.
For $0<z\le 1$, we have
\begin{align*}
\frac{\Lambda_{\gq}^{(z),*}(f_\ga)}{\Lambda_{\gq}^{(z),*}(1)}
= & \delta(\ga=\square) \nr(\ga)^{(-z-1)/4}
+\Ocal_{\e,\e'}(\nr(\ga)^{(\underline{l}/2+d_F-1-(\s+1)/2)/d_F-\e/d_F+\e'}\nr(\gq)^{-\delta_1+\e})
\end{align*}
for any small $\e,\e'>0$, where the implied constant is independent of $\gn$, $\ga$ and $z$.
When $z=0$, we have
\begin{align*}
\frac{\Lambda_{\gq}^{(0),*}(f_\ga)}{\Lambda_{\gq}^{(0),*}(1)}
=\, &
\delta(\ga=\square)\nr(\ga)^{-1/4}
-\delta(\ga=\square)\frac{1}{2}\nr(\ga)^{-1/4}\frac{\log \nr(\ga)}{\log \nr(\gq)}
\left\{1+\Ocal\left(\frac{1}{\log \nr(\gq)}\right)\right\}
\\
& +\Ocal_{\e,\e'}\left(\nr(\ga)^{(\underline{l}/2+d_F-1-(\s+1)/2)/d_F-\e/d_F+\e'}\frac{\nr(\gq)^{-\delta_1+\e}}{\log\nr(\gq)}\right),
\end{align*}
where the implied constant is independent of $\gn$ and $\ga$.
\end{cor}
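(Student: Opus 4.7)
The plan is to reduce the new-form statement to Corollary \ref{equidist of Lambda} via old-new decomposition and then simplify the $z=0$ case. Since $\gq$ is prime, every $\pi \in \Pi_{\rm cus}(l,\gq)$ has $\gf_\pi \in \{\go,\gq\}$, so
\[\Pi_{\rm cus}(l,\gq) = \Pi_{\rm cus}(l,\go)\,\sqcup\,\Pi_{\rm cus}^*(l,\gq),\]
and inserting the prefactor $\frac{1}{{\rm M}(\gq)^{\delta(z=0)}}\frac{\nr(\gq)^{(z-1)/2}}{1+\nr(\gq)^{(z+1)/2}}$ from the definition of $\Lambda_\gq^{(z)}$ yields
\[\Lambda_\gq^{(z)}(f_\ga) = \Lambda_\gq^{(z),*}(f_\ga) + I_\gq^{(z)}(f_\ga),\]
where $I_\gq^{(z)}$ is exactly the quantity estimated in Lemma \ref{esti of old forms}. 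Writing $A = \Lambda_\gq^{(z)}(f_\ga)$, $A_0 = \Lambda_\gq^{(z)}(1)$, $B = I_\gq^{(z)}(f_\ga)$, $B_0 = I_\gq^{(z)}(1)$, we have
\[\frac{\Lambda_\gq^{(z),*}(f_\ga)}{\Lambda_\gq^{(z),*}(1)} = \frac{A-B}{A_0-B_0} = \frac{A}{A_0} + \frac{(A/A_0)B_0 - B}{A_0 - B_0}.\]

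For $0 < z \le 1$ the first term is treated by Corollary \ref{equidist of Lambda}, which already produces the main term $\delta(\ga=\square)\nr(\ga)^{(-z-1)/4}$ up to the desired error. For the correction term I use Lemma \ref{esti of old forms} to bound both $B$ and $B_0$ by $O_\e(\nr(\gq)^{(-1-z)/2}\nr(\ga)^\e)$, and I observe that $A_0 \to \zeta_{F,\fin}(1+z) C_l^{(z)} \cdot 2\,(-1)^{\#S}\Upsilon^{(z)}(\a_{\mathbf 1})$ is bounded away from zero as $\nr(\gq)\to\infty$ (this is the $\ga=\go$ specialization of the main term \eqref{main of lambda z}). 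Since $\nr(\gq)^{(-1-z)/2}$ is dominated by $\nr(\gq)^{-\delta_1+\e}$ for $\delta_1 \le 1/2$, the correction is absorbed into the existing error.

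The $z=0$ case requires a bit more care and is the main obstacle. Applying Corollary \ref{equidist of Lambda} for $\gn = \gq$ prime, one has ${\rm M}(\gq) = \frac{\log\nr(\gq)}{1+\nr(\gq)^{-1/2}} = \log\nr(\gq)\bigl(1+O(\nr(\gq)^{-1/2})\bigr)$, and inspection of the defining expression shows
\[D(\gq) = \Res_{z=1}\zeta_{F,\fin}(z)\,C_l^{(0)} + O\!\left(\frac{1}{\log\nr(\gq)}\right),\]
since every other summand in $D(\gq)$ carries a factor $1/{\rm M}(\gq)$. Consequently,
\[C_l^{(0)}\,\frac{\Res_{z=1}\zeta_{F,\fin}(z)}{{\rm M}(\gq)\,D(\gq)} = \frac{1}{\log\nr(\gq)}\left\{1 + O\!\left(\frac{1}{\log\nr(\gq)}\right)\right\},\]
which, inserted into the $z=0$ part of Corollary \ref{equidist of Lambda}, gives the explicit $-\frac12\delta(\ga=\square)\nr(\ga)^{-1/4}\,\log\nr(\ga)/\log\nr(\gq)$ term in the claim. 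The old-form correction $(A/A_0)B_0 - B$ is again controlled by Lemma \ref{esti of old forms}: $B, B_0 \ll \nr(\gq)^{-1/2}\nr(\ga)^\e$, while $A_0 - B_0 \asymp 1$, so the resulting contribution is $O(\nr(\gq)^{-1/2+\e}/\log\nr(\gq))$, strictly smaller than the stated error $\nr(\gq)^{-\delta_1+\e}/\log\nr(\gq)$.

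The main obstacle is purely bookkeeping: one must verify that the additive corrections arising from subtracting $I_\gq^{(z)}$ from numerator and denominator do not spoil either the explicit $\log\nr(\ga)/\log\nr(\gq)$ coefficient at $z=0$ or the shape of the error, and that the simplification of $D(\gq)$ produces exactly a $1+O(1/\log\nr(\gq))$ relative factor rather than something coarser. No additional trace-formula input beyond Corollary \ref{equidist of Lambda} and Lemma \ref{esti of old forms} should be needed.
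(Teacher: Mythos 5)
Your proposal is correct and follows essentially the same route as the paper: the paper's own proof consists precisely of the decomposition $\Lambda_{\gq}^{(z),*}(f_\ga)=\Lambda_{\gq}^{(z)}(f_\ga)-I_\gq^{(z)}(f_\ga)$, re-running Corollary \ref{equidist of Lambda} with Lemma \ref{esti of old forms} absorbing the old-form contribution, and the observation $C_l^{(0)}\Res_{z=1}\zeta_{F,\fin}(z)/D(\gq)=1+\Ocal(1/\log\nr(\gq))$ together with ${\rm M}(\gq)=\log\nr(\gq)\,(1+\Ocal(\nr(\gq)^{-1/2}))$, exactly as you do. Your only inaccuracy is cosmetic: for $0<z\le 1$ the limit of $\Lambda_\gq^{(z)}(1)$ is $\zeta_{F,\fin}(1+z)C_l^{(z)}(-1)^{\#S}\Upsilon^{(z)}(\a_{\mathbf 1})$ without the factor $2$ (the second summand in \eqref{main of lambda z} tends to $0$), but since you only use a uniform lower bound on the denominator this does not affect the argument.
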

\begin{proof}Invoking
$\Lambda_{\gq}^{(z),*}(f_\ga)=\Lambda_{\gq}^{(z)}(f_\ga) - I_\gq^{(z)}(f_\ga),$
the same proof of Corollary \ref{equidist of Lambda} goes through with the aid of Lemma
\ref{esti of old forms}.
We remark \[\frac{C_{l}^{(0)}\Res_{z=1}\zeta_{F,\fin}(z)}{D(\gq)}=1+\Ocal\left(\frac{1}{{\rm M}(\gq)}\right)=1+\Ocal\left(\frac{1}{\log \nr(\gq)}\right)\]
as $\nr(\gq)\rightarrow \infty$.
\end{proof}

\section{Weighted distributions of low-lying zeros}
\label{Weighted distributions of low-lying zeros}

\subsection{Symmetric power $L$-functions}
\label{Symmetric power $L$-functions}
Let $F$ be a totally real number field such that $2 \in \QQ$ is completely splitting in $F$ as in \S\ref{Refinements of equidistributions weighted by symmetric square L-functions}.
Let $l= (l_v)_{v \in \Sigma_\infty}$ be a family of positive even integers and $\gq$
a prime ideal of $\go$, and fix $r \in \NN$.
For $\pi \in \Pi_{\rm cus}^*(l,\gq)$,
we define the completed symmetric power $L$-function
$L(s,\Sym^r(\pi))=\prod_{v \in \Sigma_\infty\cup\Sigma_\fin}L(s, \Sym^r(\pi_v))$
as in \cite[\S3]{CogdellMichel} (see also \cite[\S2.1.4]{RicottaRoyer}).
First we define the local $L$-factors of $\Sym^r(\pi_v)$ as follows.
For $v \in \Sigma_\infty$, set
\[L(s, \Sym^r(\pi_v)) =\prod_{j=0}^{\frac{r-1}{2}}\Gamma_\RR\left(s+(2j+1)\frac{l_v-1}{2}\right)\Gamma_\RR\left(s+1+(2j+1)\frac{l_v-1}{2}\right)\]
if $r$ is odd, and
\[L(s, \Sym^r(\pi_v))=\Gamma_\RR(s+\mu_{r})\prod_{j=1}^{\frac{r}{2}}\Gamma_\RR\left(s+j(l_v-1)\right)\Gamma_\RR\left(s+1+j(l_v-1)\right)\]
with $\mu_{r}=\delta(r/2 \in 2\NN_0+1) \in \{0,1\}$ if $r$ is even.
For $v \in \Sigma_\fin-S(\gq)$, set
\[L(s,\Sym^r(\pi_v)) = \det \left(1_{r+1}-q_v^{-s}\Sym^r\left(\begin{smallmatrix}
q_v^{-\nu_v(\pi)/2} & 0\\
0 & q_v^{\nu_v(\pi)/2}
\end{smallmatrix}\right)\right)^{-1}= \prod_{j=0}^{r}(1-(q_{v}^{\nu_v(\pi)/2})^{2j-r}q_v^{-s})^{-1},\]
where $1_{r+1}$ is the $(r+1)\times (r+1)$ unit matrix and $\Sym^r : \GL_2(\CC)\rightarrow \GL_{r+1}(\CC)$ is the $r$th symmetric tensor representation.
At $v =\gq$,
the conductor of $\pi_v$ equals $\gp_v=\gq\go_v$ and $\pi_v$ is isomorphic to $\chi_v\otimes {\rm St}_{2}$, where ${\rm St}_{2}$ is the Steinberg representation of $\GL_2(F_v)$ and $\chi_v$ is an unramified character of $F_v^\times$ such that $\chi_v^2$ is trivial. Set
\[L(s,\Sym^r(\pi_v)) = (1-\chi_v(\varpi_v)^r q_v^{-s-r/2})^{-1}.\]
Then, $L(s, \Sym^r(\pi))$ is expected to have an analytic continuation to $\CC$
and the functional equation $L(s, \Sym^r(\pi)) = \epsilon_{\pi,r} (D_F^{r+1}\nr(\gq)^{r})^{1/2-s}L(1-s,\Sym^r(\pi))$, where $\e_{\pi,r}\in\{\pm 1\}$
is defined as
\[\epsilon_{\pi,r}=\begin{cases}
	\{\prod_{v \in \Sigma_\infty} \prod_{j=0}^{(r-1)/2}i^{(2j+1)(l_v-1)+1}\}
(-\chi_\gq(\varpi_\gq)^r)^r
& (r \in 2\NN_0+1),\\
1  & (r \in 2\NN)\\
\end{cases}\]
by \cite[\S3]{CogdellMichel}. The local $L$-factors defined above are compatible with the local Langlands correspondence for $\GL_n$.

Throughout this article, we assume ${\rm Nice}(\pi,r)$ in the introduction for all $\pi \in \Pi_{\rm cus}^{*}(l,\gq)$ and for all prime ideals $\gq$ relatively prime to $2\prod_{j=1}^{h}\ga_j$ for a fixed $l$.
Here we review known results related with the hypothesis $\Nice(\pi,r)$.
In the case $r=1$, this hypothesis is well-known to be true.
In our setting, $\pi$ has non-CM since $\gf_\pi$ is square-free and the central character 
of $\pi$ is trivial.
From this, $\Nice(\pi,r)$ is known for $r=2$ by Gelbart and Jacquet \cite[(9.3) Theorem]{GelbartJacquet},
$r=3$ by Kim and Shahidi \cite[Corollary 6.4]{KimShahidi}
and $r=4$ by Kim \cite[Theorem B and \S7.2]{Kim}.
For any $r\in \NN$, the meromorphy of $L(s, \Sym^r(\pi))$ and its functional equation
can be proved by the potential automorphy of Galois representations $\Sym^r\circ\rho_\pi$,
where $\rho_\pi$ is the Galois representation attached to $\pi$.
See Harris, Shepherd-Barron and Taylor \cite{HSBT} for non-CM elliptic curves over a totally real number field with multiplicative reduction at a finite place,
Gee \cite{Gee} for non-CM elliptic modular forms of weight $3$ with a twisted Steinberg representation at a prime,
Barnet-Lamb, Geraghty, Harris and Taylor \cite{BLGHT}
for non-CM elliptic modular forms of general weight, level and nebentypus,
and Barnet-Lamb, Gee and Geraghty \cite{BLGG} for Hilbert modular
forms.

Automorphy of $\Sym^r(\pi)$ with higher $r$ has been studied and is known in several cases.
When $\pi$ is attached to a holomorphic elliptic cusp form of level $1$,
$\Sym^5(\pi)$ is automorphic by Dieulefait \cite{Dieulefait}.
For some class of totally real number fields $F$
and for any non-CM regular $C$-algebraic irreducible cuspidal automorphic representation $\pi$ of $\GL_2(\AA_F)$,
the lift $\Sym^r(\pi)$ is automorphic when $r=5, 7$ by Clozel and Thorne
\cite[Corollary 1.3]{ClozelThorne2}
and when $r=6,8$ by Clozel and Thorne \cite[Corollary 1.2]{ClozelThorne3}.
As a recent remarkable work, if we restrict our case to elliptic modular forms $F=\QQ$,
the hypothesis $\Nice(\pi,r)$ holds true for all $r\in \NN$, all $\pi \in \Pi_{\rm cus}^*(l,\gq)$ and all prime ideals $\gq$
since $\Sym^r(\pi)$ is automorphic and cuspidal by Newton and Thorne \cite{NewtonThorne}.
They generalized it to \cite{NewtonThorne2}, which covers general levels of elliptic modular forms.

Let $\phi$ be an even Schwartz function on $\RR$ whose Fourier transform $\hat\phi$ is compactly supported.
In the same manner as \cite[Lemma 2.6]{Guloglu} and \cite[Proposition 3.8]{RicottaRoyer},
the explicit formula for $L(s, \Sym^r(\pi))$ \`a la Weil is stated as
{\allowdisplaybreaks\begin{align*}&D(\Sym^r(\pi),\phi) \\
= & \,\hat\phi(0) +\frac{(-1)^{r+1}}{2}\phi(0) -\frac{2}{\log Q(\Sym^r(\pi))} \sum_{v \in \Sigma_{\fin}-S(\gq)}\lambda_\pi(\gp_v^r)\frac{\log q_v}{q_v^{1/2}}
\hat\phi\left(\frac{\log q_v}{\log Q(\Sym^r(\pi))}\right)\\
& -\sum_{m=0}^{r-1}(-1)^{m}\frac{2}{\log Q(\Sym^r(\pi))} \sum_{v \in \Sigma_{\fin}-S(\gq)}\lambda_\pi(\gp_v^{2(r-m)})\frac{\log q_v}{q_v}
\hat\phi\left(\frac{2\log q_v}{\log Q(\Sym^r(\pi))}\right)\\
&+\Ocal\left(\frac{1}{\log Q(\Sym^r(\pi))}\right), \qquad \nr(\gq)\rightarrow \infty.
\end{align*}
}Here $Q(\Sym^r(\pi)):=(\prod_{v \in \Sigma_\infty}l_v^{2\lfloor\frac{r+1}{2}\rfloor})\nr(\gq)^r$ is the analytic conductor of $\Sym^r(\pi)$ and set
\[\lambda_\pi(\gp_v^m)=\sum_{j=0}^{r}(q_v^{-m\nu_v(\pi)/2})^j(q_v^{m\nu_v(\pi)/2})^{r-j},\qquad  m \in \NN.\]
We remark that the explicit formula above is still valid even if $Q(\Sym^r(\pi))$ is replaced with
$Q_r := \nr(\gq)^r$.

We consider the averaged one-level density of low-lying zeros of $L(s, \Sym^r(\pi))$ weighted
by special values $L(\frac{z+1}{2}, \Sym^2(\pi))$.
For $l=(l_v)_{v \in \Sigma_\infty}$ with $\underline{l}:=\min_{v \in \Sigma_\infty} l_v\ge 4$, a prime ideal $\gq$, $z \in [0, \min(1, \s)]$ with a fixed $\s\in (0, \underline{l}-3)$, and
a map $A_\bullet : \Pi_{\rm cus}^*(l,\gq) \rightarrow \CC \ ; \ \pi\mapsto A_\pi$,
set
\[\Ecal_z(A) =\frac{1}{\sum_{\pi\in \Pi_{\rm cus}^{*}(l,\gq)}\frac{L(\frac{z+1}{2},\Sym^2(\pi))}{L(1,\Sym^2(\pi))}}\sum_{\pi\in \Pi_{\rm cus}^*(l,\gq)}\frac{L(\frac{z+1}{2},\Sym^2(\pi))}{L(1, \Sym^2(\pi))} A_\pi.\]

\begin{prop}\label{one level density for z>0}
	For any $r \in \NN$ and $z \in (0,\min(1,\s)]$, set \[\beta_0=\frac{\delta_1}{r\{r(\frac{\underline{l}}{2}+d_F-1-\frac{z+1}{2})\frac{1}{d_F}+\frac{1}{2}\}}>0.\]
where $\delta_1>0$ is a number defined in Theorem \ref{quantitative ver of equidist}.
Then, for any even Schwartz function $\phi$ on $\RR$ with ${\rm supp}(\hat\phi) \subset (-\beta_0,\beta_0)$, we have
	\begin{align*}
	\Ecal_z(D(\Sym^r(\bullet),\phi))
	= \hat\phi(0)+\frac{(-1)^{r+1}}{2}\phi(0)+\Ocal\left(\frac{1}{\log Q_r}\right), \qquad \nr(\gq)\rightarrow \infty.
	\end{align*}
\end{prop}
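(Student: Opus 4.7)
The plan is to combine the Weil-type explicit formula stated just above for $L(s, \Sym^r(\pi))$ with the weighted equidistribution in Corollary \ref{equidist of lambda new part}. Writing the explicit formula as
\[
D(\Sym^r(\pi),\phi) = \hat\phi(0) + \tfrac{(-1)^{r+1}}{2}\phi(0) - 2T_1(\pi) - 2T_2(\pi) + O(1/\log Q_r),
\]
with $T_1(\pi)$ the prime sum having coefficients $\lambda_\pi(\gp_v^r)\frac{\log q_v}{q_v^{1/2}}\hat\phi(\log q_v/\log Q_r)$ and $T_2(\pi)$ the alternating double sum having coefficients $\lambda_\pi(\gp_v^{2(r-m)})\frac{\log q_v}{q_v}\hat\phi(2\log q_v/\log Q_r)$ for $0\le m\le r-1$, and noting that each Hecke factor $\lambda_\pi(\gp_v^k)$ is a Chebyshev polynomial $X_k$ evaluated at $x_v(\pi) = q_v^{-\nu_v(\pi)/2}+q_v^{\nu_v(\pi)/2}$, I would apply the averaging operator $\Ecal_z$. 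Since the denominator $\Lambda_\gq^{(z),*}(1)$ is nonzero for large $\nr(\gq)$ by Theorem \ref{quantitative ver of equidist}, linearity of $\Ecal_z$ reduces the proposition to proving $\Ecal_z(T_i) = O(1/\log Q_r)$ for $i = 1, 2$.

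For each $v \nmid \gq\prod_j \ga_j$, I would invoke Corollary \ref{equidist of lambda new part} with $S = \{v\}$ and $\ga = \gp_v^k$, yielding for $z \in (0, \min(1,\sigma)]$
\[
\Ecal_z(X_k(x_v(\bullet))) = \delta(k \in 2\ZZ)\,q_v^{-k(z+1)/4} + O\bigl(q_v^{kE_0}\nr(\gq)^{-\delta_1+\epsilon}\bigr),
\]
where $E_0 = (\underline{l}/2 + d_F - 1 - (\sigma+1)/2)/d_F$ for $\sigma$ chosen slightly above $z$. The main-term contribution to $\Ecal_z(T_1)$ either vanishes (when $r$ is odd, by the parity indicator) or is dominated by the absolutely convergent series $\sum_v q_v^{-r(z+1)/4 - 1/2}\log q_v$ (the exponent exceeds $1$ since $r \ge 2$ and $z > 0$), producing $O(1/\log Q_r)$. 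Likewise, each $m$-summand of the main-term contribution to $\Ecal_z(T_2)$ produces an absolutely convergent prime sum of the shape $\sum_v q_v^{-(r-m)(z+1)/2 - 1}\log q_v$ with $r - m \ge 1$, again $O(1/\log Q_r)$.

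The error contribution to $\Ecal_z(T_1)$, restricted by $\operatorname{supp}(\hat\phi) \subset (-\beta_0,\beta_0)$ to $q_v < Q_r^{\beta_0}$, is dominated by
\[
\frac{\nr(\gq)^{-\delta_1+\epsilon}}{\log Q_r}\sum_{q_v<Q_r^{\beta_0}} q_v^{rE_0-1/2}\log q_v \ll \frac{\nr(\gq)^{-\delta_1+\epsilon}\,Q_r^{\beta_0(rE_0+1/2)}}{\log Q_r},
\]
and the choice $\beta_0 = \delta_1/[r(rE_0 + 1/2)]$, together with $Q_r \asymp \nr(\gq)^r$, renders this $O(1/\log Q_r)$. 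An analogous estimate for $\Ecal_z(T_2)$ uses the tighter cutoff $q_v < Q_r^{\beta_0/2}$ and the maximum Chebyshev index $2r$; the same value of $\beta_0$ suffices, completing the plan.

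The main obstacle will be the borderline main-term analysis for $r=2$: the critical Dirichlet-series exponent $r(z+1)/4 + 1/2 = (z+2)/2$ strictly exceeds $1$ precisely when $z > 0$, which is exactly the restriction in the proposition. The excluded case $z = 0$ generates logarithmic contributions requiring the second main term of Corollary \ref{equidist of lambda new part} and is handled separately.
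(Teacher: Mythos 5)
Your proposal follows essentially the same route as the paper's proof: apply the Weil explicit formula, average with $\Ecal_z$ by linearity, and control the resulting Hecke-eigenvalue sums via Corollary \ref{equidist of lambda new part} (with $\ga=\gp_v^r$ resp.\ $\gp_v^{2(r-m)}$) together with the prime ideal theorem, choosing $\beta_0$ so that $\nr(\gq)^{-\delta_1+\e}$ beats the accumulated powers of $q_v$ up to the cutoff. The only cosmetic difference is that the paper works with a cutoff $\beta$ strictly smaller than $\beta_0$ (legitimate because ${\rm supp}(\hat\phi)$ is compact inside the open interval) and removes the $\e,\e'$ slack at the end, which is exactly the strictness your estimate implicitly relies on.
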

\begin{proof}We assume  $\beta>0$ and ${\rm supp}(\hat\phi)\subset[-\beta,\beta]$, where $\beta>0$ is suitably chosen later.
We start evaluation from the expression
{\allowdisplaybreaks\begin{align}
& \Ecal_z(D(\Sym^r(\bullet),\phi))\label{formula of E_z} \\
= & \,\hat\phi(0)+\frac{(-1)^{r+1}}{2}\phi(0) +\Ocal\left(\frac{1}{\log Q_r}\right)\notag\\
&-\frac{2}{\log Q_r}
\sum_{v \in \Sigma_\fin-S(\gq)}\Ecal_z(\lambda_\pi(\gp_v^r))\frac{\log q_v}{q_v^{1/2}}\hat\phi\left(\frac{\log q_v}{\log Q_r}\right) \label{M^1}\\
&
-\sum_{m=0}^{r-1}(-1)^{m}\frac{2}{\log Q_r} \sum_{v \in \Sigma_{\fin}-S(\gq)}\Ecal_z(\lambda_\pi(\gp_v^{2(r-m)}))\frac{\log q_v}{q_v}
\hat\phi\left(\frac{2\log q_v}{\log Q_r}\right).\label{M^2}
\end{align}
}as $\nr(\gq)\rightarrow \infty$. We denote the terms \eqref{M^1} and \eqref{M^2} by $M^{(1)}$ and $M^{(2)}$, respectively.
Since $\nr(\gq)$ tends to infinity, we may assume that $\gq$ is relatively prime to	$2\ga\prod_{j=1}^{h}\ga_j$.
Set \[A = (\underline{l}/2+d_F-1-(\s+1)/2)/d_F-\e/d_F+\e'\]
for any fixed small $\e, \e'>0$. Then, Corollary \ref{equidist of lambda new part} yields
{\allowdisplaybreaks\begin{align*}
&M^{(1)} = -\frac{2}{\log Q_r}\sum_{v \in \Sigma_\fin-S(\gq)}\{
\delta(r\in 2\NN)(q_v^r)^{-(z+1)/4}+\Ocal_{\e,\e'}((q_v^r)^{A}\nr(\gq)^{-\delta_1+\e})
\}\frac{\log q_v}{q_v^{1/2}}\hat\phi\left(\frac{\log q_v}{\log Q_r}\right)\\
= & -\frac{2\delta(r\in 2\NN)}{\log Q_r}\sum_{v \in \Sigma_\fin}\frac{\log q_v}{q_v^{1/2+r/4+rz/4}} +\Ocal\left(\frac{1}{\log Q_r}\right)+\Ocal\bigg(\sum_{\substack{v \in \Sigma_\fin-S(\gq) \\ q_v\le Q_r^\beta}}q_v^{rA}\frac{\log q_v}{q_v^{1/2}}\frac{\nr(\gq)^{-\delta_1+\e}}{\log Q_r}\bigg) \\
= & \Ocal\left(\frac{\delta(r\in2\NN)}{\log Q_r}\sum_{v \in \Sigma_\fin}\frac{\log q_v}{q_v^{1+rz/4}}\right) +\Ocal\left(\frac{\nr(\gq)^{r\beta(rA+1/2)}\nr(\gq)^{-\delta_1+\e}}{\log Q_r}\right),
\qquad \nr(\gq)\rightarrow \infty.
\end{align*}
}Here we use the inequality $1/2+r/4+rz/4 \ge 1+rz/4>1$ for $r\ge 2$
and the asymptotics
\begin{align}\label{asymp from PNT}
\sum_{\substack{v \in \Sigma_{\fin} \\ q_v\le x}}q_v^{a}\log q_v =\frac{1}{a+1}x^{a+1}+\Ocal\left(\frac{x^{a+1}}{\log x}\right) =\Ocal(x^{a+1}), \qquad x \rightarrow \infty
\end{align}
for $a>-1$ deduced from the prime ideal theorem and partial summation.
Consequently, the estimate $M^{(1)}\ll\frac{1}{\log Q_r}$ holds as long as $\beta\le \frac{\delta_1-\e}{r(rA+1/2)}$.

Furthermore, we obtain $M^{(2)} \ll \frac{1}{\log Q_r}$ because of the estimate
\begin{align*}
&\sum_{v \in \Sigma_{\fin}-S(\gq)}
\{(q_v^{2(r-m)})^{-(z+1)/4}+\Ocal_{l}((q_v^{r-m})^{A}\nr(\gq)^{-\delta_1+\e})\}
\frac{\log q_v}{q_v}
\hat\phi\left(\frac{\log q_v}{\log Q_r}\right)\\
\ll & \sum_{v \in \Sigma_\fin}\frac{\log q_v}{q_v^{1+\frac{(r-m)(z+1)}{2}}} + (\nr(\gq)^{r\beta})^{rA}\nr(\gq)^{-\delta_1+\e}
\ll 1
\end{align*}
for any $0\le m \le r-1$ by virtue of Corollary \ref{equidist of lambda new part} and \eqref{asymp from PNT}, as long as $\beta\le\frac{\delta_1-\e}{r^2A}$. By removing $\e$ and $\e'$ from two inequalities on $\beta$ as above,
we are done.
\end{proof}

Next let us consider the central value case $z=0$.
\begin{prop}\label{one level density for z=0}
	For any $r \in \NN$ and $z=0$, let $\beta_0>0$ be the same as in Proposition \ref{one level density for z>0} for $z=0$.
Then, for any even Schwartz function $\phi$ on $\RR$ with ${\rm supp}(\phi) \subset (-\beta_0, \beta_0)$, we have
	\begin{align*}
	\Ecal_0(D(\Sym^r(\bullet),\phi))
	= \hat\phi(0)+\frac{(-1)^{r+1}}{2}\phi(0)+\delta_{r,2}\{-\phi(0)+2\int_{-\infty}^{\infty}\hat\phi(x)|x|dx\} +\Ocal\left(\frac{1}{\log \nr(\gq)}\right)
	\end{align*}
as $\nr(\gq)\rightarrow \infty$ with the implied constant independent of $\gq$,
where $\delta_{r,2}:=\delta(r=2)$.
\end{prop}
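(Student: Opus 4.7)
The plan is to mirror the proof of Proposition \ref{one level density for z>0}. Starting from the explicit formula, we will use the same decomposition
\begin{align*}
\Ecal_0(D(\Sym^r(\bullet),\phi)) = \hat\phi(0) + \tfrac{(-1)^{r+1}}{2}\phi(0) + M^{(1)} + M^{(2)} + \Ocal(1/\log Q_r),
\end{align*}
where $M^{(1)}$ and $M^{(2)}$ are the analogues of \eqref{M^1} and \eqref{M^2} with $z=0$. The essential new ingredient is Corollary \ref{equidist of lambda new part} at $z=0$, whose second main term inserts an extra factor
\begin{align*}
-\,\delta(r\in 2\NN)\,q_v^{-r/4}\cdot \tfrac{r\log q_v}{2\log \nr(\gq)}\bigl(1+\Ocal(1/\log\nr(\gq))\bigr)
\end{align*}
into the leading asymptotics of $\Ecal_0(\lambda_\pi(\gp_v^r))$, and similarly for $\Ecal_0(\lambda_\pi(\gp_v^{2(r-m)}))$. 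The error contributions will be controlled exactly as in Proposition \ref{one level density for z>0} by choosing $\beta_0$ so that $r\beta_0(rA+\tfrac12)\leq \delta_1-\e$, with the additional $1/\log\nr(\gq)$ in the error from Corollary \ref{equidist of lambda new part} only helping.

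For odd $r$, the factor $\delta(r\in 2\NN)$ vanishes, so only the error survives and $M^{(1)}=\Ocal(1/\log Q_r)$. For even $r\geq 4$, the exponent $r/4+1/2\geq 3/2$ makes $\sum_v (\log q_v) q_v^{-r/4-1/2}$ absolutely convergent, giving $\Ocal(1/\log Q_r)$ for the leading piece; the secondary correction supplies an additional $1/\log\nr(\gq)$ saving, hence $\Ocal(1/(\log Q_r)^2)$. Similarly $M^{(2)}=\Ocal(1/\log Q_r)$ for every $r$, since $2(r-m)\geq 2$ forces the effective exponent $(r-m)/2+1\geq 3/2$ and the analogous series converge.

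The critical case is $r=2$, where $r/4+1/2=1$ places the relevant sums exactly at the boundary of divergence. Here I would evaluate the two sums
\begin{align*}
\Sigma_1 := \sum_v \frac{\log q_v}{q_v}\hat\phi\Bigl(\frac{\log q_v}{\log Q_2}\Bigr), \qquad \Sigma_2 := \sum_v \frac{(\log q_v)^2}{q_v}\hat\phi\Bigl(\frac{\log q_v}{\log Q_2}\Bigr)
\end{align*}
asymptotically via Mertens' theorem for number fields (which gives $\sum_{q_v\leq y}\log q_v/q_v = \log y + \Ocal(1)$) and partial summation, yielding $\Sigma_1 = (\log Q_2)\int_0^\infty\hat\phi(x)\,dx + \Ocal(1)$ and $\Sigma_2 = (\log Q_2)^2\int_0^\infty x\hat\phi(x)\,dx + \Ocal(\log Q_2)$. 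Combining these with $\log Q_2 = 2\log\nr(\gq) + \Ocal(1)$ and the evenness of $\hat\phi$ (so that $2\int_0^\infty\hat\phi=\phi(0)$ and $4\int_0^\infty x\hat\phi(x)\,dx=2\int_\RR|x|\hat\phi(x)\,dx$), the two pieces of $M^{(1)}$ combine to $-\phi(0)+2\int_\RR|x|\hat\phi(x)\,dx$, which is precisely the new term in the proposition. The main obstacle will be in this last step: tracking the Mertens asymptotics to the correct order and verifying that the conductor ratio $\log Q_2/\log\nr(\gq)\to 2$ produces exactly the coefficients $-1$ in front of $\phi(0)$ and $2$ in front of $\int_\RR|x|\hat\phi(x)\,dx$, so that the two apparent divergences of $\Sigma_1$ and $\Sigma_2$ neatly conspire into the distinguished term.
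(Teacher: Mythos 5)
Your proposal is correct and follows essentially the same route as the paper's proof: the same decomposition into $M^{(1)}$ and $M^{(2)}$, the insertion of the secondary term from Corollary \ref{equidist of lambda new part} at $z=0$, convergence of the relevant Dirichlet-type sums for $r\neq 2$, and for $r=2$ the evaluation of the two boundary sums $\Sigma_1,\Sigma_2$ by the prime ideal theorem (Mertens) plus partial summation, which combine with $\log Q_2=2\log\nr(\gq)$ to give exactly $-\phi(0)+2\int_{\RR}\hat\phi(x)|x|\,dx$. This matches the paper's argument (which cites \cite[Lemma 4.4]{Miller} for those two asymptotics), so no changes are needed.
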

\begin{proof}
We assume $\beta>0$ and ${\rm supp}(\hat\phi)\subset[-\beta,\beta]$, where $\beta>0$ is suitably chosen later.
The formula \eqref{formula of E_z} is valid for $z=0$, and we
define $M^{(1)}$ and $M^{(2)}$ in the same way as the proof of Proposition
\ref{one level density for z>0}.
With the aid of Corollary \ref{equidist of lambda new part} for $z=0$,
the term $M^{(1)}$ is evaluated as
{\allowdisplaybreaks\begin{align*}
	M^{(1)} = & -\frac{2}{\log Q_r}\sum_{v \in \Sigma_\fin-S(\gq)}\bigg\{
	\delta(r \in 2\NN)q_v^{-r/4}-\frac{\delta(r \in 2\NN)}{2}q_v^{-r/4}\frac{\log (q_v^r)}{\log \nr(\gq)}\\
	& \times \left\{1+\Ocal\left(\frac{1}{\log\nr(\gq)}\right)\right\}
	+	\Ocal_{\e,\e'}\left(q_v^{rA}\frac{\nr(\gq)^{-\delta_1+\e}}{\log \nr(\gq)}\right)
	\bigg\} \frac{\log q_v}{q_v^{1/2}}\hat\phi\left(\frac{\log q_v}{\log Q_r}\right)\\
	= & -2 \delta(r \in 2\NN)\sum_{v \in \Sigma_{\fin}}\hat\phi\left(\frac{\log q_v}{\log Q_r}\right)\frac{\log q_v}{q_v^{1/2+r/4}\log Q_r}\\
&	+\delta(r \in 2\NN)\left\{1+\Ocal\left(\frac{1}{\log\nr(\gq)}\right)\right\}
\frac{r}{\log \nr(\gq)} \sum_{v \in \Sigma_{\fin}}
	\hat\phi\left(\frac{\log q_v}{\log Q_r}\right)\frac{(\log q_v)^2}{q_v^{1/2+r/4}\log Q_r}\\
	& +\Ocal\left(\frac{1}{\log Q_r}\right)+\Ocal_{\e,\e'}\bigg(\frac{1}{\log Q_r}\sum_{\substack{v \in \Sigma_\fin-S(\gq) \\ q_v\le Q_r^\beta}}q_v^{rA}\frac{\log q_v}{q_v^{1/2}}\frac{\nr(\gq)^{-\delta_1+\e}}{\log \nr(\gq)}\bigg)
	\end{align*}
}as $\nr(\gq)\rightarrow \infty$ for any fixed $\e, \e'>0$.
When $r\ge 3$, then $1/2+r/4>1$ is satisfied and hence $M^{(1)}$ is estimated as $\Ocal(\frac{1}{\log \nr(\gq)})$
with the aid of the estimate
\begin{align*}
\sum_{\substack{v \in \Sigma_\fin-S(\gq) \\ q_v\le Q_r^\beta}}q_v^{rA}\frac{\log q_v}{q_v^{1/2}}\nr(\gq)^{-\delta_1+\e}\ll (Q_r^\beta)^{rA+1/2}\nr(\gq)^{-\delta_1+\e}\ll 1
\end{align*}
from the asymptotics \eqref{asymp from PNT} under $\beta \le\frac{\delta_1-\e}{r^2A}$. The case $r=1$ is similarly estimated.
Consequently we have $M^{(1)}\ll \frac{1}{\log \nr(\gq)}$ when $r\neq2$.
Next let us consider the case $r=2$. By $1/2+r/4=1$,
we need two asymptotics for evaluating $M^{(1)}$:
\begin{align*}
\sum_{v \in \Sigma_\fin}\hat\phi\left(\frac{\log q_v}{\log Q_2}\right)\frac{\log q_v}{q_v\log Q_2}
=& \frac{1}{2}\phi(0) +\Ocal\left(\frac{1}{\log Q_2}\right), \qquad \nr(\gq)\rightarrow \infty,
\end{align*}
\begin{align*}
\sum_{v \in \Sigma_\fin}\hat\phi\left(\frac{\log q_v}{\log Q_2}\right)\frac{(\log q_v)^2}{q_v(\log Q_2)^2}
=& \frac{1}{2}\int_{-\infty}^{\infty}\hat\phi(x)|x|dx+\Ocal\left(\frac{1}{\log Q_2}\right), \qquad \nr(\gq)\rightarrow \infty.
\end{align*}
These are proved by the prime ideal theorem and partial summation (cf.\ \cite[Lemma 4.4 i), iii)]{Miller}).
From these, a direct computation
yields
{\allowdisplaybreaks\begin{align*}
M^{(1)} = & -\phi(0) +\frac{\log Q_2}{\log\nr(\gq)}\int_{-\infty}^{\infty} \hat\phi(x)|x|dx+\Ocal\left(\frac{1}{\log\nr(\gq)}\right)
+\Ocal_{\e,\e'}\left(\frac{1+\nr(\gq)^{2\beta(2A+1/2)-\delta_1+\e}}{\log Q_2}\right) \\
= & -\phi(0)+2(1+\nr(\gq)^{-1/2})\int_{-\infty}^\infty \hat\phi(x)|x|dx + \Ocal_{\e,\e'}\left(\frac{1}{\log \nr(\gq)}\right)
\end{align*}
}as $\nr(\gq)\rightarrow \infty$ under $\beta\le \frac{\delta_1-\e}{2(2A+1/2)}$.
This gives the evaluation of $M^{(1)}$ for $r=2$.
The term $M^{(2)}$ for any $r \in \NN$ is estimated by $\Ocal(\frac{1}{\log \nr(\gq)})$ similarly to Proposition \ref{one level density for z>0} under $\beta\le \frac{\delta_1-\e}{2(2A+1/2)}$.
Thus we are done by removing $\e$ and $\e'$ from the inequalities on $\beta$ above.
\end{proof}

Theorem \ref{main: weighted one level density for z}
is proved by Propositions \ref{one level density for z>0} and \ref{one level density for z=0}.
The explicit form of $\beta_2$ in Theorem \ref{main: weighted one level density for z}
is given by $\beta_0$ in Proposition \ref{one level density for z>0}
and $\delta_1=\min(\delta,\delta')-1/2$ in Theorem \ref{quantitative ver of equidist}
with the aid of Lemmas \ref{hyperbolic term} and \ref{elliptic term}.

As a remark,
if we specialize the parameter $z$ to $z=1$,
Theorem \ref{main: weighted one level density for z} becomes
a formula similar to
\cite[Theorem 11.5]{ShinTemplier} for $G=\PGL_2$ and $\Sym^r : {}^L\PGL_2\rightarrow \GL_{r+1}(\CC)$,
although the principal congruence subgroup $\Gamma(\gq)$ is considered there.
Now Hypotheses 11.2 and 11.4 in \cite{ShinTemplier} are satisfied
and Hypothesis 10.1 in \cite{ShinTemplier} is replaced with ${\bf \rm Nice}(\pi,r)$ in our setting.
Furthermore, the Frobenius-Schur indicator $s(\Sym^r)$ of $\Sym^r : \SL_2(\CC)\rightarrow \GL_{r+1}(\CC)$ is equal to $(-1)^r$ (cf.\ \cite[Exercise 11.33]{FultonHarris}).

\appendix
\section{Comparison of the ST trace formula with Zagier's formula}
\label{Comparison of the ST trace formula with Zagier's formula}
Appendices \ref{Comparison of the ST trace formula with Zagier's formula}, \ref{Comparison of the ST trace formula with Takase's formula} and \ref{Comparison of the ST trace formula with Mizumoto's formula} are independent of interest in our main results in this article.
When \cite{SugiyamaTsuzuki2018} was reviewed, the anonymous referee
said to the author that it is non-trivial to recover known formulas due to Zagier \cite{Zagier}, due to Mizumoto \cite{Mizumoto} and due to Takase \cite{Takase},
from the trace formula in Theorem \ref{JZtrace} by Tsuzuki and the author \cite{SugiyamaTsuzuki2018} what is called {\it the ST trace formula} in this article.
Thus, comparison among these trace formulas above is meaningful and helpful.
In Appendix \ref{Comparison of the ST trace formula with Zagier's formula}, we prove the compatibility of the ST trace formula with Zagier's formula.
Takase's and Mizumoto's formulas are considered
in Appendix \ref{Comparison of the ST trace formula with Takase's formula} and Appendix \ref{Comparison of the ST trace formula with Mizumoto's formula}, respectively.

In what follows, notation is the same as in \cite{SugiyamaTsuzuki2018} (see also \S \ref{Refinements of equidistributions weighted by symmetric square L-functions}).
We use the functional equation $\zeta_{F}(s)=D_F^{1/2-s}\zeta_F(1-s)$
of the completed Dedekind zeta function of $F$
and the duplication formula $\Gamma(2z)=2^{2z-1}\pi^{-1/2}\Gamma(z)\Gamma(z+1/2)$,
which is also stated as $\Gamma_\RR(s)\Gamma_\RR(s+1)=\Gamma_\CC(s)$,
without notice.
For the integer ring $\go$ of a number field and
$m \in \go$, the symbol $m=\square$ means $m=n^2$ for some $n \in \go-\{0\}$.

\subsection{Zagier's formula}
We compare the ST trace formula with Zagier's trace formula.
For a positive even integer $k$, let $B_k$ be an orthogonal basis of $S_k(\Gamma_0(1))$ consisting of primitive forms.
For $f \in B_k$, let $a_f(m)$ be the $m$th Fourier coefficient of $f$ at the cusp $i\infty$.
For a discriminant $\Delta \in \ZZ$, $L(s, \Delta)$
is the $L$-function associated with binary quadratic forms with discriminant $\Delta$
divided by the Riemann zeta function (cf.\ \cite[p.109--110]{Zagier}).
The quantity $I_k(\Delta,t;s)$ denotes the integral defined in \cite[p.110]{Zagier}.
Then, Zagier's formula \cite[Theorem 1]{Zagier} is stated as follows.
\begin{thm}[Zagier's formula \cite{Zagier}]\label{Zagier formula}
	Assume $k\ge 4$ and $m \in \NN$. For $2-k<\Re(s)<k-1$, i.e., $|\Re(2s-1)|<2k-3$, we have
	\begin{align*}c_m(s):=&\sum_{f \in B_k}\frac{(-1)^{k/2}\pi}{2^{k-3}(k-1)}
		\frac{\Gamma(s+k-1)}{(4\pi)^{s+k-1}}\frac{L_{\fin}(s, \Sym^2(f))}{(f,f)}\\
		=& m^{k-1}\sum_{t \in \ZZ}\{I_k(t^2-4m,t;s)+I_k(t^2-4m,-t;s)\}L(s,t^2-4m)\\
		&+\delta(m=\square)(-1)^{k/2}\frac{\Gamma(k+s-1)\zeta(2s)}{2^{2s+k-3}\pi^{s-1}\Gamma(k)}m^{\frac{k-1}{2}-\frac{s}{2}}.
	\end{align*}
Here $L_{\fin}(s, \Sym^2(f))$ is the symmetric square $L$-function attached to $f$ with all archimedean local $L$-factors removed.
\end{thm}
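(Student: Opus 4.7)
The strategy is to specialise the ST trace formula (Theorem \ref{JZtrace}) and match its two sides with Zagier's. First I would set $F = \QQ$, take the parallel weight $l = (k)$ and the trivial level $\gn = \ZZ$, so that $\Pi_{\rm cus}(k, \ZZ)$ is in bijection with $B_k$ via the standard automorphic-to-classical dictionary $\pi \leftrightarrow f_\pi$. Substituting $z = 2s - 1$ aligns $L(\tfrac{z+1}{2}, \Sym^2(\pi))$ with $L(s, \Sym^2(\pi))$. Writing $m = \prod_{p \in S(m)} p^{n_p}$, I would take $S = S(m)$ and the pure-tensor test function
\[
\a(\bfs) = \prod_{p \in S} X_{n_p}(p^{-s_p/2} + p^{s_p/2}) \in \Acal(m\ZZ);
\]
by multiplicativity of normalised Hecke eigenvalues this gives $\a(\nu_S(\pi)) = \lambda_\pi(m) = a_{f_\pi}(m)\, m^{-(k-1)/2}$, which together with the $m^{(k-1)/2}$ hidden in $a_{f_\pi}(m)$ accounts for the factor $m^{k-1}$ on the right-hand side of Zagier's formula.

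Next I would read off the spectral side. Since $D_F = 1$, $\nr(\gn) = 1$, and $W_\gn^{(z)}(\pi) = 1$ for $\gn = \ZZ$, the cuspidal contribution is
\[
\II_{\rm cusp}^0(\ZZ|\a, 2s-1) = \tfrac{1}{2} \sum_{\pi \in \Pi_{\rm cus}(k, \ZZ)} \frac{L(s, \Sym^2(\pi))}{L(1, \Sym^2(\pi))}\, \lambda_\pi(m).
\]
Inserting the archimedean factor $L(s, \Sym^2(\pi_\infty)) = \Gamma_\RR(s+1)\,\Gamma_\CC(s+k-1)$ (with $\mu_2 = 1$) and Shimura's explicit identity relating $(f_\pi, f_\pi)$ to $L_{\fin}(1, \Sym^2(\pi))$ converts this sum into $c_m(s) \cdot a_{f_\pi}(m)$ of Theorem \ref{Zagier formula}. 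The remaining prefactor $\tfrac{(-1)^{k/2}\pi}{2^{k-3}(k-1)}\tfrac{\Gamma(s+k-1)}{(4\pi)^{s+k-1}}$ is recovered by combining $C(k, \ZZ)^{-1} = (k-1)/(4\pi)$ from Theorem \ref{JZtrace}, the sign $(-1)^{\#S(m)}$ (which cancels against the same sign appearing in $\Upsilon^{(z)}$ below), and the explicit Gamma factors from the archimedean $L$-factors.

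On the geometric side, Lemma \ref{explicit unip int} yields $\Upsilon^{(z)}(\a) = (-1)^{\#S(m)}\,\delta(m = \square)\, m^{-(z+1)/4}$, so both $\JJ_{\rm unip}^0(\ZZ|\a, \pm z)$ vanish unless $m$ is a perfect square, and their sum (after symmetrising in $z \mapsto -z$ and applying the functional equation of $\zeta_F$ to $\zeta_F(-z)$) reproduces Zagier's diagonal term $\delta(m = \square)(-1)^{k/2}\,\Gamma(k+s-1)\,\zeta(2s)\,2^{3-k-2s}\pi^{1-s}\Gamma(k)^{-1}\, m^{(k-1-s)/2}$. The elliptic term $\JJ_{\rm ell}^0(\ZZ|\a, z)$, taken with $n = m$ and $t$ varying over $\ZZ$, parameterises the values $\Delta = t^2 - 4m \notin (\QQ^\times)^2$; the global factor $L(\tfrac{z+1}{2}, \varepsilon_\Delta)\,\nr(\gf_\Delta)^{(z+1)/4}$ is precisely the completed form of Zagier's $L(s, t^2 - 4m)$ via the Kronecker symbol identification of $\varepsilon_\Delta$. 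The remaining case $\Delta = \square$, $\Delta \neq 0$ is picked up by $\JJ_{\rm hyp}^0$, in which the sum over $a \in \ZZ[S^{-1}]_+^\times - \{1\}$ parameterises the pairs of rational roots $(\alpha, \beta)$ of $X^2 - tX + m$ via $a = \alpha/\beta$, and the identity $t^2 - 4m = (\alpha - \beta)^2$ folds this contribution into the $t$-sum. The $t \leftrightarrow -t$ symmetry between $I_k(\Delta, t; s)$ and $I_k(\Delta, -t; s)$ matches the coset identification $(t : n)_F \sim (-t : n)_F$ on the geometric side.

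The main obstacle will be the archimedean identity relating $\Ocal_\infty^{\pm, (z)}$ to $I_k(\Delta, t; s)$: the former is given in terms of the associated Legendre function $\fP_{(z-1)/2}^{1-k}$, while the latter is an explicit real integral. An Euler-type integral representation of $\fP_\nu^\mu$ identifies the two expressions, but the bookkeeping is delicate because the sign of $\Delta$ selects between $\Ocal^{+, (z)}_\infty$ (hyperbolic, $\Delta > 0$) and $\Ocal^{-, (z)}_\infty$ (elliptic, $\Delta < 0$), each matching a different branch of $I_k$; the constants must be tracked carefully. A secondary obstacle is that Theorem \ref{JZtrace} requires $|\Re(z)| < k - 3$, i.e.\ $|\Re(2s - 1)| < k - 3$, narrower than Zagier's range $|\Re(2s-1)| < 2k - 3$; I would close this gap by meromorphic continuation in $s$, since the spectral sum is finite (hence manifestly meromorphic) and each geometric ingredient ($L(s, t^2 - 4m)$, the archimedean factors, and $\zeta(2s)$) admits known meromorphic continuation.
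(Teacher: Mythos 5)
Your overall route is the same as the paper's (specialize the ST trace formula with $F=\QQ$, $l=k$, $\gn=\ZZ$, $S=S(m\ZZ)$, $\a=\otimes_p X_{\ord_p(m)}$, $z=2s-1$, then match term by term), and most of your bookkeeping is consistent with it. However, there is a genuine error in your treatment of the unipotent contributions. You claim that $\JJ_{\rm unip}^0(\ZZ|\a,z)+\JJ_{\rm unip}^0(\ZZ|\a,-z)$, after symmetrizing and using the functional equation, equals only Zagier's final term $\delta(m=\square)(-1)^{k/2}\Gamma(k+s-1)\zeta(2s)2^{3-k-2s}\pi^{1-s}\Gamma(k)^{-1}m^{(k-1-s)/2}$. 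In fact only the $+z$ term produces that expression (via $(-1)^{\#S}\Upsilon^{(z)}(\a)=\delta(m=\square)m^{-s/2}$ and $\zeta(-z)=\zeta(1-2s)$, whose completed functional equation gives the $\zeta(2s)$-factor). The $-z$ term carries $\zeta(z)=\zeta(2s-1)$ and $(-1)^{\#S}\Upsilon^{(-z)}(\a)=\delta(m=\square)m^{(s-1)/2}$, and it matches precisely the two degenerate terms $t=\pm 2\sqrt{m}$, i.e.\ $t^2-4m=0$, of Zagier's $t$-sum, for which $L(s,0)=\zeta(2s-1)$ and $I_k(0,\pm t;s)$ collapses to a Gamma-factor expression (Zagier's (13)). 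Since in your partition the hyperbolic term covers only $t^2-4m$ equal to a nonzero square and the elliptic term the non-squares, your accounting leaves the $\Delta=0$ terms of Zagier's sum unmatched and an extra $\zeta(2s-1)$-contribution unabsorbed on the trace-formula side; these cannot be merged into the $\zeta(2s)$ term by any functional equation, as $\zeta(2s)$ and $\zeta(2s-1)$ are genuinely different functions of $s$. Repairing this requires exactly the identification the paper carries out in its unipotent comparison, including the evaluation of the degenerate archimedean integral.

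Two smaller points. In the elliptic term you simply set $n=m$, $t\in\ZZ$; this reduction of the cosets $(t:n)_\QQ$ is not automatic but follows from the vanishing of $\prod_q\hat\Scal_q^{t^2-4n,(z)}(\a_q,\cdot)$ unless there is $c\in\ZZ(S)^\times\cap\NN$ with $ct\in\ZZ$ and $nc^2=m$, so it needs to be invoked explicitly (as the paper does via \cite[Lemma 2.7]{SugiyamaTsuzuki2019}); also the relevant prefactor is $\nr(\mathfrak d_\Delta)^{(z+1)/4}$, not $\nr(\gf_\Delta)^{(z+1)/4}$. Finally, your proposal to extend from $|\Re(2s-1)|<k-3$ to Zagier's full range $|\Re(2s-1)|<2k-3$ by meromorphic continuation is reasonable (and goes slightly beyond the paper, which simply works in the narrower strip), but it requires knowing that the $t$-sum on the geometric side converges to a holomorphic function in the larger strip, which is supplied by Zagier's own estimates rather than by the ST trace formula.
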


To prove Zagier's formula from the ST trace formula, 
we consider the case where $F=\QQ$, $l=k$, $\fn=\ZZ$, $S=S(m\ZZ)=\{p :{\rm prime}
\ | \ \ord_p(m)>0\}$,
$\a(\bfs)=\otimes_{p \in S}X_{n_p}(p^{-s_p/2}+p^{s_p/2})$ with
$n_p = \ord_p(m)$,
and $z=2s-1$
throughout Appendix \ref{Comparison of the ST trace formula with Zagier's formula}.
We assume $|\Re(2s-1)|<k-3$ to use the ST trace formula, which is narrower than Zagier's range $|\Re(2s-1)|<2k-3$.
\subsection{Cuspidal terms of Zagier's formula}
\label{Cuspidal terms of Zagier's formula}

As for the spectral side, we obtain
\allowdisplaybreaks{
\begin{align*}
	(-1)^{\#S}C(k,\ZZ)\II_{\rm cusp}^0(\ZZ|\a,z) = & (-1)^{\#S}2^{-1}\frac{4\pi}{k-1}\sum_{\pi \in \Pi_{\rm cus}(k,\ZZ)}\frac{L(\frac{z+1}{2}, \Sym^2(\pi))}{L(1, \Sym^2(\pi))}\a(\nu_S(\pi))\\
	= &  (-1)^{\#S}\frac{2\pi}{k-1}\sum_{f \in B_k}\frac{L(\frac{z+1}{2}, \Sym^2(f))}{L(1, \Sym^2(f))}\frac{a_f(m)}{m^{\frac{k-1}{2}}}.
\end{align*}
}As for Zagier's formula, the left-hand side of Zagier's formula is described as
\[c_m(s)=(-1)^{k/2}\frac{1}{2^{s+k-2}\pi^{-s/2-1/2}\Gamma(\frac{s+1}{2})} \times \frac{2\pi}{k-1}\sum_{f \in B_k}\frac{L(s,\Sym^2(f))}{L(1, \Sym^2(f))}a_f(m),\]
where we use $L(1,\Sym^2(f))=2^k(f,f)$ (cf.\ \eqref{norm of new form}).
The ST trace formula yields
\begin{align}\label{cuspidal of Zagier}
	c_m(s)= & \frac{(-1)^{k/2}(-1)^{\#S}m^{\frac{k-1}{2}}}{2^{s+k-2}\pi^{-s/2-1/2}\Gamma(\frac{s+1}{2})}
	\times(-1)^{\#S}C(l,\ZZ)\II_{\rm cusp}^0(\ZZ|\a,z) \\
	= & \frac{(-1)^{k/2}(-1)^{\#S}m^{\frac{k-1}{2}}}{2^{s+k-2}\pi^{-s/2-1/2}\Gamma(\frac{s+1}{2})}\{\JJ_{\rm unip}^0(\ZZ|\a,z)+\JJ_{\rm unip}^0(\ZZ|\a,-z)+\JJ_{\rm hyp}^0(\ZZ|\a,z)+\JJ_{\rm ell}^0(\ZZ|\a,z)\}.\notag
\end{align}

\subsection{Unipotent terms of Zagier's formula}
\label{Unipotent terms of Zagier's formula}
Let $\zeta(s)$ denote the Riemann zeta function. Then, $\zeta_\QQ(s)=\Gamma_\RR(s)\zeta(s)$ holds.
By definition, we have
\begin{align*}
	\JJ_{\rm unip}^0(\ZZ|\a,z) = &
	\frac{(-1)^{k/2}\Gamma(k+s-1)\zeta(2s)}{2^{2s+k-3}\pi^{s-1}\Gamma(k)}\Upsilon^{(z)}(\a)
	\times(-1)^{k/2}2^{s+k-2}\pi^{-s/2-1/2}\Gamma(\tfrac{s+1}{2}),
\end{align*}
where we note $k=l$ and $s=\frac{z+1}{2}$.
The term $\JJ_{\rm unip}^0(\ZZ|\a, -z)$ is transformed into
\allowdisplaybreaks{\begin{align*}
&	\frac{(-1)^{k/2}\Gamma(k-s)\zeta(2-2s)}{2^{-2s+k-1}\pi^{-s}\Gamma(k)}\Upsilon^{(-z)}(\a)
		\times (-1)^{k/2}2^{s+k-2}\pi^{-s/2-1/2}\Gamma(\tfrac{s+1}{2}) 2^{1-2s}\pi^{s-1/2}\frac{\Gamma(\frac{2-s}{2})}{\Gamma(\frac{s+1}{2})}\\
		= & \frac{(-1)^{k/2}\Gamma(k-s)\zeta(2s-1)}{2^{-2s+k-1}\pi^{-s}\Gamma(k)}\Upsilon^{(-z)}(\a)
		\times (-1)^{k/2}2^{s+k-2}\pi^{-s/2-1/2}\Gamma(\tfrac{s+1}{2})\\
		& \times 2^{1-2s}\pi^{1-s}\frac{\Gamma(s-1/2)}{\Gamma(\frac{s+1}{2})}\frac{\Gamma(1-s/2)}{\Gamma(1-s)}\\
		= &
		\frac{(-1)^{k/2}\Gamma(k-s)\zeta(2s-1)}{2^{-2s+k-1}\pi^{-s}\Gamma(k)}\frac{\Gamma(s-1/2)}{\Gamma(\frac{s+1}{2})\Gamma(\frac{1-s}{2})}\Upsilon^{(-z)}(\a)
		\times (-1)^{k/2}2^{s+k-2}\pi^{-s/2-1/2}\Gamma(\tfrac{s+1}{2})\\
		& \times 2^{1-2s}\pi^{1-s}\frac{\Gamma(\frac{1-s}{2})\Gamma(\frac{1-s}{2}+\frac{1}{2})}{\Gamma(1-s)}\\
		= &
		\frac{(-1)^{k/2}\Gamma(k-s)\zeta(2s-1)}{2^{-2s+k-1}\pi^{-s}\Gamma(k)}\frac{\Gamma(s-1/2)}{\Gamma(\frac{s+1}{2})\Gamma(\frac{1-s}{2})}\Upsilon^{(-z)}(\a)
		\times (-1)^{k/2}2^{s+k-2}\pi^{-s/2-1/2}\Gamma(\tfrac{s+1}{2}) 2^{1-s}\pi^{3/2-s}\\
		= &
		\frac{(-1)^{k/2}\Gamma(k-s)\zeta(2s-1)}{\Gamma(k)}2^{s-k+2}\pi^{3/2}\frac{\Gamma(s-1/2)}{\Gamma(\frac{s+1}{2})\Gamma(\frac{1-s}{2})}\Upsilon^{(-z)}(\a)
		\,\times (-1)^{k/2}2^{s+k-2}\pi^{-s/2-1/2}\Gamma(\tfrac{s+1}{2}).
	\end{align*}
}As a consequence,
from two formulas above, we obtain
\begin{align}\label{unipotent of Zagier}
	& \frac{(-1)^{k/2}(-1)^{\#S}m^{\frac{k-1}{2}}}{2^{s+k-2}\pi^{-s/2-1/2}\Gamma(\frac{s+1}{2})}\{\JJ_{\rm unip}^0(\ZZ|\a,z)+\JJ_{\rm unip}^0(\ZZ|\a,-z) \}\\
	= &  m^{\frac{k-1}{2}}(-1)^{\#S}\Upsilon^{(z)}(\a)\frac{(-1)^{k/2}\Gamma(k+s-1)\zeta(2s)}{2^{2s+k-3}\pi^{s-1}\Gamma(k)} \notag \\
  &	+2\times m^{\frac{k-1}{2}}(-1)^{\#S}\Upsilon^{(-z)}(\a)\pi^{3/2}\frac{(-1)^{k/2}}{\Gamma(\frac{s+1}{2})\Gamma(\frac{1-s}{2})}
	\frac{\Gamma(s-1/2)\Gamma(k-s)}{\Gamma(k)}\zeta(2s-1)2^{s-k+1}.\notag
\end{align}
The first term of the right-hand side corresponds to the second term in Zagier's formula,
and the second term of the right-hand side corresponds to
the sum of two terms for $t^2-4m=0$ (i.e., $t=\pm\sqrt{4m}$) in Zagier's formula given by
\[2\times m^{k-1} \delta(m=\square)\pi^{3/2} \frac{(-1)^{k/2}}{\Gamma(\frac{s+1}{2})\Gamma(\frac{1-s}{2})}
\frac{\Gamma(s-1/2)\Gamma(k-s)}{\Gamma(k)}\zeta(2s-1)
2^{s-k+1}m^{(s-k)/2}\]
(cf.\ \cite[(13)]{Zagier}). Note $(-1)^{\#S}\Upsilon^{(z)}(\a) = \delta(m=\square)\,m^{-s/2}$
and $(-1)^{\#S}\Upsilon^{(-z)}(\a) = \delta(m=\square)\,m^{(s-1)/2}$ by Lemma \ref{explicit unip int}.

\subsection{Hyperbolic terms of Zagier's formula}
\label{Hyperbolic terms of Zagier's formula}
We relate integrals of the test function $\a$ with terms such that $t^2-4m=\square$ in the sum of Zagier's formula.

Let $\Delta$ be an integer such that $\Delta\not=0$ and $\Delta\equiv 0,1\pmod{4}$. Then $\Delta=f^2D$ holds for $f\in \NN$ and a fundamental discriminant $D\in \ZZ$. Set
\[
B_\Delta(s) = \sum_{d|f}\mu(d)\chi_{D}(d)d^{-s}\sigma_{1-2s}\left(\tfrac{f}{d}\right), \quad s\in \CC,
\]
where $d$ runs over all positive divisors of $f$, $\mu$ is the M\"obius function, $\chi_{D}$ is the Kronecker character for $D$, and $\sigma_{1-2s}(d)=\sum_{c|d}c^{1-2s}$. 
\begin{lem} \label{explicit of B_Delta}
	Set $z=2s-1$. Then we have
	\begin{align*}
		B_{\Delta}(s)=f^{-\frac{z+1}{2}}\prod_{p|f}
		\left(\frac{\zeta_{p}(z)}{L_p\left(\tfrac{z+1}{2},\chi_{D}\right)}|f|_p^{-\frac{z+1}{2}}+\frac{\zeta_{p}(-z)}{L_p\left(\tfrac{-z+1}{2},\chi_{D}\right)}|f|_p^{-\frac{-z+1}{2}}\right),
	\end{align*}
where $\zeta_p(s)$ and $L_p(s, \chi_D)$ are the local $L$-factors 
of $\zeta(s)$ and of $L(s,\chi_D)$ at $p$, respectively.
\end{lem}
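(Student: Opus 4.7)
The plan is to prove the formula by multiplicativity in $f$, reducing everything to a single local identity at each prime $p \mid f$. Since both $d \mapsto \mu(d)\chi_D(d) d^{-s}$ and $n \mapsto \sigma_{1-2s}(n)$ are multiplicative arithmetic functions, so is their Dirichlet convolution, which is exactly $f \mapsto B_{f^2 D}(s)$ for fixed $D$. Thus $B_\Delta(s) = \prod_{p \mid f} B_{p, e_p}(s)$, where $e_p = \mathrm{ord}_p(f)$ and $B_{p,e}(s)$ denotes the value of the defining sum when $f$ is replaced by $p^e$.

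For the local factor, only $j = 0, 1$ contribute to the M\"obius sum (since $\mu(p^j) = 0$ for $j \geq 2$), giving
\[
B_{p,e}(s) = \sigma_{1-2s}(p^e) - \chi_D(p)\, p^{-s}\, \sigma_{1-2s}(p^{e-1}).
\]
Setting $z = 2s-1$ and using the geometric-series identity $\sigma_{-z}(p^e) = (1 - p^{-(e+1)z})/(1 - p^{-z})$, I would multiply through by $(1 - p^{-z})$ to express $B_{p,e}(s)$ as a short Laurent polynomial in $p^{-z/2}$.

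On the other side, the $p$-local factor inside the claimed product, combined with its share $p^{-e_p(z+1)/2}$ of the global prefactor $f^{-(z+1)/2}$, simplifies (using $|f|_p = p^{-e_p}$ and $-\tfrac{z+1}{2} + \tfrac{-z+1}{2} = -z$) to
\[
\frac{\zeta_p(z)}{L_p\bigl(\tfrac{z+1}{2}, \chi_D\bigr)} + \frac{\zeta_p(-z)}{L_p\bigl(\tfrac{-z+1}{2}, \chi_D\bigr)}\, p^{-e_p z}.
\]
The concluding step is to verify that this equals $B_{p, e_p}(s)$. The main (though routine) work will be here: after clearing the denominator $(1 - p^{-z})$ and using the elementary identity $1 - p^z = -p^z(1 - p^{-z})$ to reconcile $\zeta_p(-z)^{-1} = 1 - p^z$ against $1 - p^{-z}$, both sides reduce to the same polynomial in $\chi_D(p)$ and $p^{-z}$. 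The ramified case $p \mid D$ is immediate since $\chi_D(p) = 0$ and $L_p = 1$ collapse both expressions to $\sigma_{-z}(p^e)$; in the unramified case, the cross-terms in $\chi_D(p)$ and the pure powers of $p^{-z}$ combine cleanly, as can be checked directly on the constant, $\chi_D(p)$-linear, and $p^{-(e+1)z}$ coefficients.
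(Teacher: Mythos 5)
Your proposal is correct, and its endgame is the same local identity the paper also has to verify; the difference lies in how the Euler-product factorization of $B_\Delta(s)$ over $p\mid f$ is obtained. You simply observe that $f\mapsto B_{f^2D}(s)$ is the Dirichlet convolution of the multiplicative functions $d\mapsto\mu(d)\chi_D(d)d^{-s}$ and $\sigma_{1-2s}$, hence multiplicative, which immediately yields the clean local factor $\sigma_{1-2s}(p^{e})-\chi_D(p)p^{-s}\sigma_{1-2s}(p^{e-1})$ at each $p\mid f$. The paper instead establishes the factorization by hand: it splits $f=f_1f_2$ into its squarefree part and the part supported on primes with $\ord_p(f)\ge 2$, expands the defining sum as a double sum over $d_1\mid f_1$, $d_2\mid f_2$ using the Euler-product expression $\sigma_{1-2s}(f/d)=\prod_{p\mid f/d}\frac{1-p^{1-2s}|f/d|_p^{2s-1}}{1-p^{1-2s}}$, tracks which divisors survive the M\"obius factor, and shows the sum factors as $B_1(s)B_2(s)$ with explicit local expressions, before finishing with the same kind of routine algebra you defer to the end. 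Your route is shorter and less error-prone because it outsources the factorization to a standard fact about multiplicative functions; the paper's route is self-contained at the cost of a more intricate bookkeeping argument. I checked your concluding local identity: with $z=2s-1$, for $p\nmid D$ one has
\begin{align*}
\frac{1-\chi_D(p)p^{-\frac{z+1}{2}}}{1-p^{-z}}+\frac{1-\chi_D(p)p^{\frac{z-1}{2}}}{1-p^{z}}\,p^{-ez}
=\frac{1-p^{-(e+1)z}-\chi_D(p)p^{-\frac{z+1}{2}}\bigl(1-p^{-ez}\bigr)}{1-p^{-z}}
=\sigma_{-z}(p^{e})-\chi_D(p)p^{-s}\sigma_{-z}(p^{e-1}),
\end{align*}
and for $p\mid D$ both sides collapse to $\sigma_{-z}(p^{e})$ as you say, so the plan closes with no gap.
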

\begin{proof} This is stated in \cite[p.8]{SugiyamaTsuzuki2019}, where the proof is omitted. The proof here is based on a personal communication of the author with Masao Tsuzuki.
	Let $f=\prod_{p}p^{\nu_p}$ be the prime decomposition and set $f_1=\prod_{p\in S_1}p$ and $f_2=\prod_{p\in S_2}p^{\nu_p}$ with $S_1=\{p \ | \ \nu_p=1\}$ and $S_2=\{p \ | \ \nu_p\geq 2\}$, respectively. Then $f_1$ and $f_2$ are relatively prime and $f=f_1f_2$. We have
	\allowdisplaybreaks{\begin{align*}
		B_{\Delta}(s)&=\sum_{d|f}\mu(d)\chi_D(d)d^{-s}\prod_{p|\frac{f}{d}}\frac{1-p^{1-2s}|f/d|_p^{2s-1}}{1-p^{1-2s}}
		\\
		&=\sum_{d_1|f_1}\sum_{d_2|f_2}\mu(d_1d_2)\chi_{D}(d_1d_2)(d_1d_2)^{-s}
		\prod_{p|\frac{f_1}{d_1}} \frac{1-p^{1-2s}|f_1/d_1|_p^{2s-1}}{1-p^{1-2s}} \prod_{p|\frac{f_2}{d_2}}\frac{1-p^{1-2s}|f_2/d_2|_p^{2s-1}}{1-p^{1-2s}}
		\\
		&=\sum_{d_1|f_1}\sum_{d_2|f_2}\mu(d_1)\mu(d_2)\chi_{D}(d_1)\chi_D(d_2)d_1^{-s}d_2^{-s}\prod_{p|\frac{f_1}{d_1}} 
		\frac{1-p^{1-2s}|f_1|_p^{2s-1}}{1-p^{1-2s}}
		\\
		&\qquad \times 
		\prod_{p|{d_2}}\frac{1-|f_2|_p^{2s-1}}{1-p^{1-2s}}
		\prod_{\substack{p\notdivide{d_2} \\ p|\frac{f_2}{d_2}}}
		\frac{1-p^{1-2s}|f_2|_p^{2s-1}}{1-p^{1-2s}}.
	\end{align*}
}To have the last equality, we note that due to the presence of $\mu$, the numbers $d_1$ and $d_2$ are restricted to square-free divisors, and whence $|d_1|_p=1$ holds for $p|\frac{f_1}{d_1}$, and $p|d_2$ implies $|d_2|_p=p^{-1}$ and $p|\frac{f_2}{d_2}$. Thus $B_{\Delta}(s)$ becomes the product $B_1(s)B_2(s)$ with
	\begin{align*}
		B_1(s)&=\sum_{d_1|f_1}\mu(d_1)\chi_{D}(d_1)d_1^{-s} \prod_{p|\frac{f_1}{d_1}} \frac{1-p^{1-2s}|f_1|_p^{2s-1}}{1-p^{1-2s}}, \\
		B_2(s)&=\sum_{d_2|f_2}\mu(d_2)\chi_{D}(d_2)d_2^{-s}
		\prod_{p|d_2}\frac{1-|f_2|_p^{2s-1}}{1-p^{1-2s}} 
		\prod_{\substack{p\notdivide d_2 \\ p|\frac{f_2}{d_2}}} \frac{1-p^{1-2s}|f_2|_p^{2s-1}}{1-p^{1-2s}}.
	\end{align*}
	Since $d_1$ and $f_1/d_1$ are relatively prime for each $d_1|f_1$, we have
	\begin{align*}
		B_1(s)&=\prod_{p|f_1}\left( -\chi_{D}(p)p^{-s}+\frac{1-p^{1-2s}|f_1|_p^{2s-1}}{1-p^{1-2s}}\right)
		\\&=
		\prod_{p|f_1} \left(\frac{1-\chi_{D}(p)p^{1-2s}}{1-p^{1-2s}}+\frac{1-\chi_{D}(p)p^{s-1}}{1-p^{2s-1}}|f_1|_p^{2s-1} \right)
	\end{align*}
	with the aid of the relation $|f_1|_p=p^{-1}$ for $p|f_1$. Similarly, we have
	\begin{align*}
		B_2(s)&=\prod_{p|f_2}\left(-\chi_{D}(p)p^{-s}\frac{1-|f_2|_p^{2s-1}}{1-p^{1-2s}}+\frac{1-p^{1-2s}|f_2|_p^{2s-1}}{1-p^{1-2s}}\right)
		\\
		&=\prod_{p|f_2}\left(\frac{1-\chi_{D}(p)p^{1-2s}}{1-p^{1-2s}}+\frac{1-\chi_{D}(p)p^{s-1}}{1-p^{2s-1}}|f_2|_p^{2s-1}\right).
	\end{align*}
	The remaining part of the proof is a straightforward calculation. 
\end{proof}

Let us consider the hyperbolic term
\begin{align*}\JJ_{\rm hyp}^0(\ZZ|\a,z)
	= & \frac{1}{2}\zeta\left(\tfrac{1+z}{2}\right)\sum_{a \in \ZZ(S)_+^\times-\{1\}}
	\bfB_\ZZ^{(z)}\left(\a \bigg| 1; \tfrac{a}{(a-1)^2}\ZZ\right)\,\Gamma_\RR\left(\tfrac{1+z}{2}\right)\Ocal_\infty^{+,(z)}\left(\tfrac{a+1}{a-1}\right).
\end{align*}
We remark that $\hat{\bf \Scal}_v^{\Delta,(z)}(\a_v;a(a-1)^{-2})$
is computed for a general number field in the same way as
\cite[Lemma 2.6]{SugiyamaTsuzuki2019} as follows.
\begin{lem}\label{explicit of hatS}Let $v$ be a finite place of a number field $F$.
	Set $d\mu_v(s_v)=\frac{\log q_v}{2}(q_v^{\frac{s_v+1}{2}}-q_v^{\frac{1-s_v}{2}})ds_v$.
	For $\Delta \in F^\times$, $n\in \NN_0$ and $a \in F^\times$, when $|a|_v\le 1$, we have
	\begin{align*}
		&\frac{-1}{2\pi i}\int_{L(c)}(-q_v^{-\frac{s_v+1}{2}})\frac{\zeta_{F_v}(s_v+\frac{z+1}{2})\zeta_{F_v}(s_v+\frac{-z+1}{2})}
		{L_{F_v}(s_v+1,\varepsilon_\Delta)}|a|_v^{\frac{s_v+1}{2}}X_n(q_v^{-s_v/2}+q_v^{s_v/2})d\mu_v(s_v) \\
		= & \delta(n-\ord_v(a) \in 2\NN_0)q_v^{-n/2}\left\{ \frac{\zeta_{F_v}(-z)q_v^{(n-\ord_v(a))(-z+1)/4}}{L_{F_v}(\frac{-z+1}{2},\varepsilon_\Delta)} +\frac{\zeta_{F_v}(z)q_v^{(n-\ord_v(a))(z+1)/4}}{L_{F_v}(\frac{z+1}{2}, \varepsilon_\Delta)}\right\}.
	\end{align*}
	When $|a|_v>1$, we have
	\begin{align*}
		&\frac{-1}{2\pi i}\int_{L(c)}(-q_v^{-\frac{s_v+1}{2}})\left\{ \frac{\zeta_{F_v}(-z)\zeta_{F_v}(s_v+\frac{z+1}{2})}
		{L_{F_v}(\frac{-z+1}{2},\varepsilon_\Delta)}|a|_v^{(-z+1)/4}+ \frac{\zeta_{F_v}(z)\zeta_{F_v}(s_v+\frac{-z+1}{2})}
		{L_{F_v}(\frac{z+1}{2},\varepsilon_\Delta)} |a|_v^{(z+1)/4} \right\} \\
		& \times X_n(q_v^{-s_v/2}+q_v^{s_v/2})d\mu_v(s_v) \\
		= &
		\delta(n \in 2\NN_0)q_v^{-n/2}\left\{ \frac{\zeta_{F_v}(-z)q_v^{(n-\ord_v(a))(-z+1)/4}}{L_{F_v}(\frac{-z+1}{2},\varepsilon_\Delta)} +\frac{\zeta_{F_v}(z)q_v^{(n-\ord_v(a))(z+1)/4}}{L_{F_v}(\frac{z+1}{2}, \varepsilon_\Delta)}\right\}.
	\end{align*}
\end{lem}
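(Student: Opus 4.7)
The plan is to reduce both integrals to residue computations via the substitution $w = q_v^{-s_v/2}$, under which the vertical contour $L(c)$ becomes the circle $|w|=q_v^{-c/2}$ traversed clockwise once (with Jacobian $ds_v = -\frac{2}{\log q_v}\frac{dw}{w}$), and all local $L$-factors turn into rational functions of $w$: one has $\zeta_{F_v}(s_v+\alpha) = (1-w^2 q_v^{-\alpha})^{-1}$, and $L_{F_v}(s_v+1,\varepsilon_\Delta)^{-1}$ equals either $1$ (ramified case) or the polynomial $1-\varepsilon_\Delta(\varpi_v)w^2 q_v^{-1}$ (unramified case). Choosing $c$ large so that only $w=0$ is enclosed, the lemma will follow by extracting the Laurent coefficient of $w^{-1}$ of the transformed integrand.

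The combinatorial key is the Chebyshev-type identity $X_n(w+w^{-1})(w^{-1}-w)=w^{-(n+1)}-w^{n+1}$, which is immediate from the defining relation $\sin((n+1)\theta)/\sin\theta = X_n(2\cos\theta)$ upon setting $w=e^{-i\theta}$. Using this together with $d\mu_v(s_v) = \frac{\log q_v}{2}q_v^{1/2}(w^{-1}-w)\,ds_v$ and the outer prefactor $-q_v^{-(s_v+1)/2} = -q_v^{-1/2}w$, I rewrite each $s_v$-integrand as $\frac{1}{2\pi i}\oint_{|w|=q_v^{-c/2},\,\text{CCW}}(w^{-n-1}-w^{n+1})\,G(w)\,dw$, where the clockwise orientation of the $w$-contour cancels the overall $-1/(2\pi i)$ and $G(w)$ is the remaining rational factor particular to the case.

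For $|a|_v \le 1$ with $k=\ord_v(a)\ge 0$, one has $G(w) = q_v^{-k/2}w^k\,L_{F_v}(s_v+1,\varepsilon_\Delta)^{-1}\,[(1-w^2 q_v^{-(z+1)/2})(1-w^2 q_v^{(z-1)/2})]^{-1}$, and the partial-fraction identity
\[
\frac{L_{F_v}(s_v+1,\varepsilon_\Delta)^{-1}}{(1-w^2 q_v^{-(z+1)/2})(1-w^2 q_v^{(z-1)/2})} = \frac{\zeta_{F_v}(-z)/L_{F_v}(\tfrac{-z+1}{2},\varepsilon_\Delta)}{1-w^2 q_v^{-(z+1)/2}} + \frac{\zeta_{F_v}(z)/L_{F_v}(\tfrac{z+1}{2},\varepsilon_\Delta)}{1-w^2 q_v^{(z-1)/2}}
\]
(verified by a direct computation of the partial-fraction coefficients in the unramified case and trivially in the ramified case) expands the rational factor as a power series in $w^2$. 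The contribution of $w^{n+1+k}$ to the residue at $w=0$ then vanishes, while $w^{k-n-1}$ forces $n-k\in 2\NN_0$ and produces the two stated summands after the exponent rearrangement $-k/2-(n-k)(z+1)/4 = -n/2+(n-k)(-z+1)/4$ (and its mirror).

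For $|a|_v>1$ each of the two given summands in $\Scal_v^{\Delta,(z)}$ already has a \emph{single} $\zeta_{F_v}$-denominator and an $s_v$-independent prefactor $|a|_v^{(\pm z+1)/4} = q_v^{-k(\pm z+1)/4}$; a direct geometric series expansion of $(1-w^2 q_v^{\mp(z+1)/2})^{-1}$ produces the residue, forces $n\in 2\NN_0$, and the two contributions reassemble into the stated right-hand side after the same exponent identification. The main source of potential trouble is not analytic but purely bookkeeping: tracking exponents of $q_v$ together with the sign from the clockwise orientation of the $w$-contour and the $-1/(2\pi i)$ outside the integral. No analytic input beyond residue calculus is required.
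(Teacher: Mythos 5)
Your proposal is correct. Note that the paper itself gives no proof of this lemma: it is stated with the remark that it "is computed \ldots in the same way as [SugiyamaTsuzuki2019, Lemma 2.6]", so the only in-paper model for such a computation is Lemma \ref{explicit unip int}, which is handled by the substitution $x=q_v^{-s/2}+q_v^{s/2}$ and an appeal to Serre's evaluation. Your route is the direct one: with $w=q_v^{-s_v/2}$ the period contour becomes the clockwise circle $|w|=q_v^{-c/2}$, and I checked the three points where such an argument could go wrong. The sign bookkeeping closes (outer $-1/(2\pi i)$, the prefactor $-q_v^{-(s_v+1)/2}$, the Jacobian $ds_v=-\tfrac{2}{\log q_v}\tfrac{dw}{w}$, and the clockwise-to-counterclockwise flip give four signs, net $+1$), the partial-fraction identity holds in both the unramified and ramified cases (the coefficients are exactly $\zeta_{F_v}(-z)/L_{F_v}(\tfrac{1-z}{2},\varepsilon_\Delta)$ and $\zeta_{F_v}(z)/L_{F_v}(\tfrac{z+1}{2},\varepsilon_\Delta)$, as one sees by evaluating at $u=w^2=q_v^{(z+1)/2}$ and $u=q_v^{(1-z)/2}$), and the exponent identification $-k/2-(n-k)(z+1)/4=-n/2+(n-k)(1-z)/4$ (with $k=\ord_v(a)$), together with its mirror, turns your residues into exactly the stated right-hand sides, including the parity conditions $\delta(n-\ord_v(a)\in 2\NN_0)$ and $\delta(n\in 2\NN_0)$. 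Compared with the variable-change-plus-Serre route, your residue computation is self-contained and makes the dependence on $\ord_v(a)$ transparent, at the modest cost of doing the geometric-series/partial-fraction expansion by hand; both are elementary and yield the same result.
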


To transform the sum over $a \in \ZZ(S)_{+}^{\times}-\{1\}$ into the sum over $t \in \ZZ$ with $t^2-4m = \square$, we need the following fundamental lemma (cf.\ \cite[p.113]{Zagier}).
\begin{lem}
	Let $m$ be a positive integer. Then, the set of $a\in \ZZ(S)_+^{\times}-\{1\}$ such that $a=\frac{t+f}{t-f}$ for some $t \in\ZZ$ and $f \in \NN$ satisfying
	$t^2-4m=f^{2}$ is bijectively corresponds to the set of $a \in \ZZ(S)_+^{\times}-\{1\}$
	such that $a=\frac{d_1}{d_2}$ and $m=d_1d_2$ for some $d_1,d_2\in \NN$ with $d_1\neq d_2$.
\end{lem}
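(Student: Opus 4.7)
The lemma is an elementary parametrization identity, and the plan is simply to prove equality of the two described subsets of $\ZZ(S)_{+}^{\times}-\{1\}$ by exhibiting explicit constructions in both directions.

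First I would prove the inclusion ``first set $\subset$ second set.'' Suppose $a=\frac{t+f}{t-f}$ with $t\in\ZZ$, $f\in\NN$, and $t^{2}-4m=f^{2}$. Since $t^{2}-f^{2}=4m\equiv 0\pmod{4}$ and squares are $\equiv 0,1\pmod{4}$, the integers $t$ and $f$ share the same parity, hence $t\pm f$ are both even. Set $d_{1}=(t+f)/2$, $d_{2}=(t-f)/2$ in $\ZZ$. Then $d_{1}d_{2}=(t^{2}-f^{2})/4=m>0$, so $d_{1}$ and $d_{2}$ have the same nonzero sign, and $a=d_{1}/d_{2}$. Replacing the pair $(d_{1},d_{2})$ by $(|d_{1}|,|d_{2}|)$ if necessary does not change their product or their ratio, so we may take $d_{1},d_{2}\in\NN$ with $d_{1}d_{2}=m$. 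Finally $d_{1}\ne d_{2}$ because $f>0$.

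Next I would prove the reverse inclusion. Given $a=d_{1}/d_{2}$ with $d_{1},d_{2}\in\NN$, $d_{1}d_{2}=m$, $d_{1}\ne d_{2}$, define
\[
(t,f)=\begin{cases}(d_{1}+d_{2},\,d_{1}-d_{2}) & \text{if }d_{1}>d_{2},\\ (-(d_{1}+d_{2}),\,d_{2}-d_{1}) & \text{if }d_{1}<d_{2}.\end{cases}
\]
In both cases $t\in\ZZ$, $f\in\NN$, and a direct calculation gives $t^{2}-f^{2}=4d_{1}d_{2}=4m$ and $(t+f)/(t-f)=d_{1}/d_{2}=a$. Thus $a$ belongs to the first set, completing the proof of equality.

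There is essentially no obstacle: the only slightly delicate points are the parity check that makes $(t\pm f)/2$ integral in the first direction, and the sign bookkeeping needed to ensure $d_{1},d_{2}>0$ (respectively $f>0$) after the transformation. Both are routine, so the lemma should be a short paragraph in the final write-up.
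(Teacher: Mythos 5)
Your proof is correct and is exactly the elementary factorization argument the lemma calls for: the paper itself gives no proof, merely citing Zagier (p.~113), and your two inclusions via $d_{1,2}=(t\pm f)/2$ (with the parity check from $t^2-f^2=4m\equiv 0\pmod 4$) and the signed inverse construction $(t,f)=(\pm(d_1+d_2),|d_1-d_2|)$ are precisely the intended verification. No gaps; the sign bookkeeping ensuring $a=d_1/d_2$ also when $d_1<d_2$ is handled correctly.
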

By this lemma and the argument on hyperbolic terms in \cite[\S2.3]{SugiyamaTsuzuki2019},
the sum over $a \in \ZZ(S)^\times_{+}-\{1\}$ is written as the sum over
$t \in \ZZ$ satisfying $t^2-4m=f^2$ for some $f \in \NN$ through $a=\frac{t+f}{t-f}$.
By setting $a=\frac{t+f}{t-f}$, we obtain
\[\frac{a}{(a-1)^2}=\frac{m}{f^2}, \qquad \frac{a+1}{a-1}=\frac{t}{f}.\]
For $m=\prod_{v \in S}p^{n_v}$ with $n_v\ge1$, $S=S(m\ZZ)$, $a = \frac{t+f}{t-f}\in \ZZ(S)_{+}^\times-\{1\}$ and $\Delta=f^2$ with $f \in \NN$, the $\delta$-symbol in $\hat{\bf \Scal}_v^{\Delta,(z)}(\a_v;a(a-1)^{-2})$ caused in Lemma \ref{explicit of hatS}
is removable as in \cite[\S2.3]{SugiyamaTsuzuki2019}.
Hence, Lemmas \ref{explicit of B_Delta} and \ref{explicit of hatS} yield
\begin{align}\label{identity of B}
	& (-1)^{\#S}\bfB_{\ZZ}^{(z)}(\a | 1;\tfrac{a}{(a-1)^2}\ZZ) \\
	= \,& \prod_{v \in S}
	 q_v^{-n_v/2}\left\{ \frac{\zeta_{F_v}(-z)q_v^{(n_v-\ord_v(a/(a-1)^2))(-z+1)/4}}{L_{F_v}(\frac{-z+1}{2},\varepsilon_\Delta)} +\frac{\zeta_{F_v}(z)q_v^{(n_v-\ord_v(a/(a-1)^2))(z+1)/4}}{L_{F_v}(\frac{z+1}{2}, \varepsilon_\Delta)}\right\} \notag \\
	& \times \prod_{v \in \Sigma_\fin-S}\left\{ \frac{\zeta_{F_v}(-z)q_v^{(-\ord_v(a/(a-1)^2))(-z+1)/4}}{L_{F_v}(\frac{-z+1}{2},\varepsilon_\Delta)} +\frac{\zeta_{F_v}(z)q_v^{(-\ord_v(a/(a-1)^2))(z+1)/4}}{L_{F_v}(\frac{z+1}{2}, \varepsilon_\Delta)}\right\} \notag \\
	= \,& m^{-1/2}\prod_{v \in S} \left\{\frac{\zeta_{F_v}(-z)}{L_{F_v}(\frac{-z+1}{2},\varepsilon_\Delta)}\left|\frac{1}{f^2}\right|_v^{(-z+1)/4} +\frac{\zeta_{F_v}(z)}{L_{F_v}(\frac{z+1}{2}, \varepsilon_\Delta)}\left|\frac{1}{f^2}\right|_v^{(z+1)/4}\right\} \notag \\
	& \times \prod_{v \notin S} \left\{\frac{\zeta_{F_v}(-z)}{L_{F_v}(\frac{-z+1}{2},\varepsilon_\Delta)}\left|\frac{m}{f^2}\right|_v^{(-z+1)/4} +\frac{\zeta_{F_v}(z)}{L_{F_v}(\frac{z+1}{2}, \varepsilon_\Delta)}\left|\frac{m}{f^2}\right|_v^{(z+1)/4}\right\}\notag \\
	=\,& m^{-1/2}\times f^{s}B_{\Delta}(s),\notag 
\end{align}
where $F=\QQ$.
For $b=\frac{t}{f} = \frac{t}{\sqrt{\Delta}}$, we observe
\begin{align*}
	\Gamma_\RR(\tfrac{z+1}{2})\Ocal_\infty^{+,(z)}(b) = \,& \Gamma_\RR(\tfrac{1+z}{2})\times \frac{2\pi}{\Gamma(k)}\frac{\Gamma(k+\frac{z-1}{2}) \Gamma(k+\frac{-z-1}{2})}{\Gamma_\RR(\frac{z+1}{2})
		\Gamma_\RR(\frac{1-z}{2})}\delta(|b|>1)(b^2-1)^{1/2}{\mathfrak P}_{\frac{z-1}{2}}^{1-k}(|b|) \\
	= \,& \frac{\Gamma(k-1/2)}{\Gamma(k)\Gamma(1/2)}\frac{2^{2k}\pi m^{k/2}f^{-k}}{\Gamma_\RR(1-s)} \delta(|b|>1)I_{k,s}(|b|)\\
	=\, & \frac{2^{k+s-1}m^{k/2}f^{-s}}{\Gamma_\RR(1-s)\cos(\frac{\pi}{2}s)(-1)^{k/2}}\{I_k(\Delta,t;s)+I_k(\Delta,-t;s)\}\\
	= \,& \frac{2^{k+s-1}m^{k/2}f^{-s}}{\pi^{(s-1)/2}\Gamma(\frac{1-s}{2})\times \pi\Gamma(\frac{1-s}{2})^{-1}\Gamma(\frac{1+s}{2})^{-1}(-1)^{k/2}}\{I_k(\Delta,t;s)+I_k(\Delta,-t;s)\}\\
	=\, & 2^{k+s-1}m^{k/2}f^{-s}\pi^{(-s-1)/2}\Gamma(\tfrac{1+s}{2})(-1)^{k/2}\{I_k(\Delta,t;s)+I_k(\Delta,-t;s)\}.
\end{align*}
Here we use the two relations
\begin{align*}
	I_{k,1-s}(x) = I_{k,s}(x)=\frac{2^{1-k}\Gamma(1/2)}{\Gamma(k-1/2)}\Gamma(k-1+s)\Gamma(k-s)(x^2-1)^{-k/2+1/2}\mathfrak{P}_{-s}^{1-k}(x)
\end{align*}
for $x \in \CC-(-\infty,1]$ and
\begin{align*}
	I(f^2,t;s)+I(f^2,-t;s) = & \left(\frac{\Delta}{4}\right)^{(s-k)/2}2\cos\left(\frac{\pi}{2}s\right)(-1)^{k/2}
	\frac{\Gamma(k-1/2)\Gamma(1/2)}{\Gamma(k)}I_{k,s}\left(\frac{|t|}{\sqrt{\Delta}}\right)
\end{align*}
when $\Delta=f^2$ by \cite[p.134]{Zagier}, where $I_{k,s}(x)$ is the integral \cite[(56)]{Zagier}.
As the $L$-function associated with $\Delta=f^2$ has the expression
$L(s, \Delta)=\zeta(s)\sum_{d|f}\mu(d)\chi_1(d)d^{-s}\s_{1-2s}(\frac{f}{d})$
by \cite[Proposition 3 iii)]{Zagier}, we obtain
\begin{align*}
	& (-1)^{\#S}\JJ_{\rm hyp}^0(\ZZ|\a,z) \\
	=\, & \frac{1}{2}\zeta(s)\sum_{\substack{t \in \ZZ\\ t^2-4m=\Delta=f^2(\exists f\ge 1)}}m^{-1/2}f^s B_\Delta(s)\\
	&\times
	2^{k+s-1}m^{k/2}f^{-s}\pi^{(-s-1)/2}\Gamma(\tfrac{s+1}{2})(-1)^{k/2}\{I_k(\Delta,t;s)+I_k(\Delta,-t;s)\}\\
	=\, & m^{(k-1)/2}2^{k+s-1}\pi^{(-s-1)/2}\Gamma(\tfrac{s+1}{2})(-1)^{k/2} \times \frac{1}{2}\sum_{\substack{t \in \ZZ\\ \Delta:=t^2-4m=\square}}\{I_k(\Delta,t;s)+I_k(\Delta,-t;s)\}\zeta(s)B_\Delta(s)\\
	=\, & m^{(k-1)/2}\pi^{(-s-1)/2}\Gamma(\tfrac{s+1}{2})(-1)^{k/2}2^{k+s-2} \\
	&\times \sum_{\substack{t \in \ZZ\\ t^2-4m=\square}}\{I_k(t^2-4m,t;s)+I_k(t^2-4m,-t;s)\}L(s,t^2-4m),
\end{align*}
and hence
	\begin{align}\label{hyperbolic of Zagier}
		& \frac{(-1)^{k/2}(-1)^{\#S}m^{\frac{k-1}{2}}}{2^{s+k-2}\pi^{-s/2-1/2}\Gamma(\frac{s+1}{2})}\JJ_{\rm hyp}^0(\ZZ|\a,z) \\
		=\, &
		m^{k-1} \sum_{\substack{t \in \ZZ\\ t^2-4m=\square}}\{I_k(t^2-4m,t;s)+I_k(t^2-4m,-t;s)\}L(s,t^2-4m).\notag
	\end{align}

\subsection{Elliptic terms of Zagier's formulas}
\label{Elliptic terms of Zagier's formulas}
We have
\[\JJ_{\rm ell}^0(\ZZ|\a,z)=\frac{1}{2}\sum_{(t:n)_\QQ}\nr({\mathfrak d}_\Delta)^{\frac{z+1}{4}}L\left(\tfrac{z+1}{2},\varepsilon_\Delta\right)
\bfB_\ZZ^{(z)}(\a|\Delta;n\ff_\Delta^{-2})\,\Ocal_\infty^{\sgn(\Delta),(z)}\left(\tfrac{t}{\sqrt{|\Delta|}}\right)\]
with $\Delta:=t^2-4n$.
Note that $\Gamma_\RR(s+1)\Ocal_\infty^{-,(z)}(\tfrac{t}{\sqrt{|\Delta|}})$ is equal to
\begin{align*}
&\Gamma_\RR(s+1)\times \frac{1}{2}\times 2^k (1+\tfrac{t^2}{-\Delta})^{k/2}(-1)^{k/2}\left\{I_k(-4,\tfrac{2t}{\sqrt{|\Delta|}};s)+I_k(-4,\tfrac{-2t}{\sqrt{|\Delta|}};s)\right\}\\
	=\, & \Gamma_\RR(s+1)\times \frac{1}{2}\times 2^k (\tfrac{4m}{|\Delta|})^{k/2}(-1)^{k/2}\times 2^{s-k}|\Delta|^{(k-s)/2}
	\{I_k(\Delta,t;s)+I_k(\Delta,-t;s)\} \\
	=\, & m^{k/2}2^{s+k-1}|\Delta|^{-s/2} \pi^{(-s-1)/2}\Gamma(\tfrac{s+1}{2})(-1)^{k/2}\{I_k(\Delta,t;s)+I_k(\Delta,-t;s)\}
\end{align*}
for $\Delta<0$, which is the same as the case for $\Delta>0$ (See Appendix \ref{Hyperbolic terms of Zagier's formula}).
In the summation, $(t, n)_\QQ$ varies as in \S \ref{Trace formulas}.
However, we can restrict the variable $n$ of $(t,n)$ to the case ``$n=m$ and $t\in \ZZ$''.
Indeed, if $\a_q(s_q)=X_{\ord_q(m)}(q^{-s_q/2}+q^{s_q/2})$ for each $q \in S=S(m\ZZ)$ and
\begin{align*}
	\prod_{q \in S}\hat{\Scal}_q^{t^2-4n, (z)}(\a_q,\tfrac{n}{f^2})\neq 0,
\end{align*}
then, there must exist $c \in \ZZ(S)^\times \cap \NN$ such that $ct \in \ZZ$ and $nc^2=m$
by \cite[Lemma 2.7]{SugiyamaTsuzuki2019}.
As a result, $(t:n)_\QQ$ in the summation of $\JJ_{\rm ell}^{0}(\ZZ|\a,z)$ can be replaced with $(t:m)_\QQ$ such that $t \in \ZZ$.
Therefore, by the formula
\begin{align*}
	(-1)^{\#S} \bfB_{\ZZ}^{(z)}(\a|\Delta; \tfrac{n}{f^2})=m^{-1/2}|f|^{s}B_{\Delta}(s)
\end{align*}
proved in the same way as in Appendix \ref{Hyperbolic terms of Zagier's formula},
the elliptic term is transformed into
\begin{align*}
	& (-1)^{\#S}\JJ_{\rm ell}^0(\ZZ|\a,z) \\
	=\, & \frac{1}{2}\sum_{(t:n)_\QQ}|D|^{s/2}L_{\fin}(s, \varepsilon_\Delta)m^{-1/2}f^{s}B_{\Delta}(s)\\
	& \times
	m^{k/2}2^{s+k-1}|\Delta|^{-s/2} \pi^{(-s-1)/2}\Gamma(\tfrac{s+1}{2})(-1)^{k/2}\{I_k(\Delta,t;s)+I_k(\Delta,-t;s)\}\\
	=\, & m^{(k-1)/2}2^{s+k-2}\pi^{(-s-1)/2}\Gamma(\tfrac{s+1}{2})(-1)^{k/2}
	\hspace{-6mm}\sum_{\substack{t \in \ZZ\\ \Delta:=t^2-4m=Df^2\neq \square}}\hspace{-5mm}\{I_k(\Delta,t;s)+I_k(\Delta,-t;s)\}L_{\fin}(s, \varepsilon_\Delta)B_{\Delta}(s),
\end{align*}
where $L_\fin(s,\varepsilon_{\Delta})$ is the finite part of $L(s,\varepsilon_{\Delta})$. Consequently we obtain
	\begin{align}\label{elliptic of Zagier}
		& \frac{(-1)^{k/2}(-1)^{\#S}m^{\frac{k-1}{2}}}{2^{s+k-2}\pi^{-s/2-1/2}\Gamma(\frac{s+1}{2})}\JJ_{\rm ell}^0(\ZZ|\a,z) \\
		= &
		m^{k-1} \sum_{\substack{t \in \ZZ\\ t^2-4m\neq \square}}\{I_k(t^2-4m,t;s)+I_k(t^2-4m,-t;s)\}L(s, t^2-4m).\notag 
	\end{align}

\subsection{Comparison with Zagier's formula}
Combining \eqref{cuspidal of Zagier}, \eqref{unipotent of Zagier}, \eqref{hyperbolic of Zagier} and \eqref{elliptic of Zagier},
the quantity $c_m(s)$ is evaluated as
\begin{align*}
	c_m(s)= \, & \frac{(-1)^{k/2}(-1)^{\#S}m^{\frac{k-1}{2}}}{2^{s+k-2}\pi^{-s/2-1/2}\Gamma(\frac{s+1}{2})}
	\times(-1)^{\#S}C(k,\ZZ)\II_{\rm cusp}^0(\ZZ|\a,z) \\
	= & \frac{(-1)^{k/2}(-1)^{\#S}m^{\frac{k-1}{2}}}{2^{s+k-2}\pi^{-s/2-1/2}\Gamma(\frac{s+1}{2})}
	\{\JJ_{\rm unip}^0(\ZZ|\a,z)+\JJ_{\rm unip}^0(\ZZ|\a,-z)+\JJ_{\rm hyp}^0(\ZZ|\a,z)+\JJ_{\rm ell}^0(\ZZ|\a,z)\} \\
	=\, &
	\delta(m=\square)m^{\frac{k-1}{2}-\frac{s}{2}}
	\frac{(-1)^{k/2}\Gamma(k+s-1)\zeta(2s)}{2^{2s+k-3}\pi^{s-1}\Gamma(k)} \\
	&+2\, \delta(m = \square)m^{\frac{k-1}{2}+\frac{s-1}{2}} \frac{(-1)^{k/2}}{\Gamma(\frac{s+1}{2})\Gamma(\frac{1-s}{2})}
	\frac{\Gamma(s-1/2)\Gamma(k-s)}{\Gamma(k)}\zeta(2s-1)2^{s-k+1}\pi^{3/2} \\
	&+m^{k-1} \sum_{\substack{t \in \ZZ\\ t^2-4m=\square}}\{I_k(t^2-4m,t;s)+I_k(t^2-4m,-t;s)\}L(s,t^2-4m)\\
	&+m^{k-1} \sum_{\substack{t \in \ZZ\\ t^2-4m\neq \square}}\{I_k(t^2-4m,t;s)+I_k(t^2-4m,-t;s)\}L(s,t^2-4m)\\
	=\, & \delta(m=\square)m^{\frac{k-1}{2}-\frac{s}{2}}
	\frac{(-1)^{k/2}\Gamma(k+s-1)\zeta(2s)}{2^{2s+k-3}\pi^{s-1}\Gamma(k)}\\
	&  +m^{k-1} \sum_{t \in \ZZ}\{I_k(t^2-4m,t;s)+I_k(t^2-4m,-t;s)\}L(s,t^2-4m).
\end{align*}
Therefore the ST trace formula recovers Zagier's formula (Theorem \ref{Zagier formula}).
\begin{rem}{\rm
We can check that Zagier's formula is equivalent to the formula \cite[(1.15)]{SugiyamaTsuzuki2019} by the same computation in Appendix \ref{Comparison of the ST trace formula with Zagier's formula}.}
\end{rem}

\section{Comparison of the ST trace formula with Takase's formula}
\label{Comparison of the ST trace formula with Takase's formula}
Takase \cite{Takase} generalized Zagier's formula to the case of Hilbert cusp forms over a totally real number field $F$ whose narrow class number $h_F^+$ is $1$ in the same method as Zagier's \cite{Zagier}.
He considered the space $S_{k}(\fn,\omega)$ of Hilbert cusp forms of weight $k=(k_v)_{v \in \Sigma_\infty}$, level $\Gamma_0(\gn)$ and nebentypus $\omega$, and assumed $\min_{v\in\Sigma_\infty}k_v \ge 3$
and $\fn=\gf_\omega$, where $\gf_\omega$ is the conductor of $\omega$.
In Appendix \ref{Comparison of the ST trace formula with Takase's formula}, we prove that the ST trace formula recovers Takase's formula.

Let $F$ be a totally real number field of degree $d_F=[F:\QQ]<\infty$ and
$\go^\times_+$ the set of all totally positive units in $\go^\times$.
Let ${\rm Cl}_F$ and ${\rm Cl}^{+}_F$ be the ideal class group of $F$ and
the narrow ideal class group of $F$, respectively.
The cardinalities of ${\rm Cl}_F$ and of ${\rm Cl}^{+}_F$
are called the class number of $F$ and the narrow class number of $F$, respectively.
In this subsection, we assume $h_F^+=1$ as Takase did in \cite{Takase}.
Invoking the exact sequence
\[1\rightarrow \go^\times/\go_{+}^{\times}\rightarrow \prod_{v \in \Sigma_\infty}F_v^\times/F_{v,+}^\times \rightarrow {\rm Cl}_F^+\rightarrow {\rm Cl}_F \rightarrow 1,
\]
where $F_{v,+}^\times$ denotes the set of positive elements of $F_v^\times$,
we have the equality $h_F^+=h_F \times 2^{d_F}/ \#(\go^\times/\go_{+}^{\times})$.
Thus the assumption $h_F^+=1$ leads us $h_F=1$, $\#(\go^\times/\go_{+}^{\times})=2^{d_F}$ and $(\go^\times)^2=\go^\times_+$.

Let $\omega$ be a unitary character of $F^\times\bsl\AA^\times$ of the form
\[\omega(x)=\prod_{v \in \Sigma_\infty}|x_v|_v^{s_v}\sgn_v(x_v)^{e_v} \prod_{v \in \Sigma_\fin}|x_v|_v^{s_v}\lambda_v(\tilde{x_v}),\]
where $s_v \in i\RR$, $e_v \in \{0,1\}$, and $\l_v$ is a character of $\go_v^\times$. The element $\tilde{x_v}\in \go_v^\times$ is the projection of $x_v \in F_v^\times$.
Let $\chi_\omega$ be the character of the ideal group associated with $\omega$ such that $\chi_\omega(\gp) =\nr(\fp)^{-s_\fp}$ for prime ideals $\fp$ prime to the conductor $\ff_\omega$ of $\omega$.

Let $\gn$ be an integral ideal such that $\gf_\omega | \gn$.
Let $k =(k_v)_{v \in \Sigma_\infty}$ be an element of $\NN^{\Sigma_{\infty}}$ such that $k_v\ge 2$ and $k_v \equiv e_v \pmod 2$ for all $v \in \Sigma_\infty$.
Let $\bfK_0(\gn)$ be the Hecke congruence subgroup of level $\gn$ (cf.\ \cite[\S2.2]{SugiyamaTsuzukiActa}).
By $h_F^+=1$, we have $\GL_2(\AA)=\GL_2(F)(G_\infty^+ \times \bfK_0(\gn))$,
where
$G_\infty^+$ denotes the subgroup of $\GL_2(F\otimes \RR)$ consisting of elements with totally positive determinant.
Put
\[\Gamma_0(\gn)=\GL_2(F)\cap (G_\infty^+\times \bfK_0(\gn))=\{\gamma=[\begin{smallmatrix}
a & b \\ c&d
\end{smallmatrix}]\in \GL_2(\go) \ | \ c\in \fn, \det (\gamma)\gg0\},\]
where $\gg 0$ means to be totally positive (cf.\ \cite[p.140]{Takase}).
Let $S_{k}(\gn, \omega)$ be the space of ($\CC$-valued) holomorphic Hilbert cusp forms of weight $k$, level $\Gamma_0(\gn)$ and nebentypus $\omega$.
For $f \in S_{k}(\gn, \omega)$, $C^*(f,\ga)$ denotes the
Fourier coefficient of $f$ at an ideal $\ga$ of $\go$ modified as in \cite[p.142]{Takase}.
We consider the second $L$-function attached to $f \in S_k(\gn,\omega)$:
\[
L_2(s,f,\overline{\chi_\omega})=\zeta_{F,\fin}(2s)_\fn \sum_{(\fb,\fn)=1}\frac{\overline{\chi_\omega}(\fb) C^*(f,\fb^2)}{\nr(\fb)^{s+1}}, \quad \Re(s)>1,\]
where $\fb$ varies in the set of ideals of $\go$ relatively prime to $\gn$
and we set $\zeta_{F,\fin}(s)_\fn = \prod_{v \Sigma_\fin-S(\gn)}(1-q_v^{-s})^{-1}$.
The second $L$-function attached to $f$ is the symmetric square of the $L$-function of $f$ twisted by $\overline{\chi_\omega}$ if $f$ is a normalized Hecke eigenform.

Put
\begin{align}\label{C_a(s)}
	C_\fa(s)=\sum_{f \in B_k(\gn,\omega)}\frac{L_2(s,f,\overline{\chi_\omega}) }{(f,f)}C^*(f,\fa),
\end{align}
where $B_k(\gn,\omega)$ is an orthogonal basis of $S_k(\gn,\omega)$ consisting of normalized Hecke eigenforms, and $(f, f)$ is the square of the Petersson norm of $f$
(cf.\ \cite[p.140]{Takase}).
Note that the quantity $C_\ga(s)$ is the same as $C^*(\Phi_s,\ga)$ in \cite[p.144]{Takase}.
For $n,m\in \go$, we set
\[\Ical_k(\Delta,n;s)=\prod_{v \in \Sigma_\infty}\frac{\Gamma(k_v)}{\Gamma(k_v-1/2)\Gamma(1/2)}I_{k_v}(\Delta_v, n_v, s),\]
where $\Delta=n^2-4m$ and $I_{k_v}(\Delta_v,n_v,s)$ is the integral as in \cite[p.110]{Zagier}
(cf.\ \cite[p.155]{Takase}), and for $a \in F$ and a place $v$ of $F$, we denote by $a_v$ the image of $a$ under the embedding $F \hookrightarrow F_v$.
For
$n, m \in \go$ such that $m\gg0$ and $(\fn,m)=1$, set
\[C_{n,m}(\omega, \fb)=\{\prod_{v \in \Sigma_\infty}m_v^{(k_v-s_v)/2}\}\sum_{\substack{t \in \go/\fb \\ t^2+nt+m\equiv 0\pmod \gb}}\overline{\lambda}_\fn(t)\]
for an ideal $\fb \subset \go$ divided by $\gn$, where $\l_\fn(t)=\delta(S(t\go)\cap S(\gn)=\emptyset)\prod_{v \in S(\fn)}\l_v(t)$.
Further we set
\[L_\fn(n,m,\omega;s)=\frac{\zeta_{F,\fin}(2s)_\fn}{\zeta_{F,\fin}(s)} \sum_{\fn | \fb}\frac{C_{n,m}(\omega,\fb)}{\nr(\fb)^s}, \qquad \Re(s)\gg 1\]
(cf.\ \cite[p.159]{Takase}).
This equals the product of an $L$-function of $F$ and a finite character sum (cf.\ \cite[Proposition 3]{Takase}).
Put $\underline{k}=\min_{v\in\Sigma_\infty}k_v$.
Under the preparation above,  Takase's formula \cite[Proposition 2]{Takase}
is stated as follows.

\begin{thm}
	[Takase's formula \cite{Takase}] \label{Takase formula}
	Suppose $h_F^+=1$, $\underline{k}=\min_{v \in \Sigma_\infty}k_v>2$ and $\fn=\gf_\omega$.
	Let $\fa=(m)$ be an integral ideal such that $m$ is a totally positive generator of $\fa$.
	For $s\in \CC$ such that $2-\underline{k}<\Re(s)<\underline{k}-1$, i.e.,
	$|\Re(2s-1)|<\underline{k}-3$, we have
		\begin{align*}
		C_\fa(s)= &\bigg\{\prod_{v \in \Sigma_\infty}\frac{(4\pi)^{k_v-1}}{\Gamma(k_v-1)}\bigg\} D_F^{1/2} \zeta_{F,\fin}(2s)_\fn \delta(\ga=\square)\nr(\fa)^{(1-s)/2}\chi_\omega(\fa)^{1/2}\\
		& + \frac{1}{2}\{\prod_{v \in \Sigma_\infty}(-2\sqrt{-1})^{k_v}(4 \pi)^{s+k_v-2}\frac{\Gamma(1/2)\Gamma(k_v-1/2)}{\Gamma(s+k_v-1)\Gamma(k_v-1)}\}
		D_F^{1/2-s} \\
		& \quad \times \sum_{n \in \go}\sum_{\varepsilon \in \go^\times/\go_{+}^{\times}}\{\prod_{v \in \Sigma_\infty}\varepsilon_v^{k_v}\}\Ical_k(\varepsilon^2(n^2-4m), \varepsilon n; s)L_{\gn}(n,m,\omega;s).
	\end{align*}
\end{thm}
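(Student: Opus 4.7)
The plan is to mimic the comparison of the ST trace formula with Zagier's formula carried out in Appendix \ref{Comparison of the ST trace formula with Zagier's formula}, specializing the data in Theorem \ref{JZtrace} to match Takase's setting. Concretely, take the totally real field $F$ with $h_F^+=1$, level $\gn=\gf_\omega$, weight $l=k$, spectral parameter $z=2s-1$ (so the hypothesis $|\Re(2s-1)|<\underline{k}-3$ places us in the convergence range of Theorem \ref{JZtrace}), the finite set $S=S(m\go)$, and the pure-tensor test function $\a(\bfs)=\bigotimes_{v\in S}X_{\ord_v(m)}(q_v^{-s_v/2}+q_v^{s_v/2})$. The first step is then to rewrite $C_\fa(s)$ in \eqref{C_a(s)} as a constant multiple of $\II_{\rm cusp}^0(\gn|\a,z)$: one converts the Fourier coefficient $C^*(f,\fa)$ into the normalized Hecke eigenvalue (introducing a factor $\nr(\fa)^{1/2}\chi_\omega(\fa)^{1/2}$), uses $L_2(s,f,\overline{\chi_\omega})$ times a bookkeeping of Euler factors to produce $L(\tfrac{z+1}{2},\Sym^2(\pi))$, and uses the Rankin--Selberg/Shimura type identity expressing $(f,f)$ in terms of $L(1,\Sym^2(\pi))$ together with explicit archimedean $\Gamma$-factors. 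Collecting all the resulting constants yields the archimedean prefactor $\prod_{v\in\Sigma_\infty}(-2\sqrt{-1})^{k_v}(4\pi)^{s+k_v-2}\frac{\Gamma(1/2)\Gamma(k_v-1/2)}{\Gamma(s+k_v-1)\Gamma(k_v-1)}$ together with the power of $D_F$ that appears in Takase's formula.

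Next I would match the unipotent, hyperbolic and elliptic contributions on the geometric side of Theorem \ref{JZtrace} against Takase's formula. The unipotent term $\JJ^0_{\rm unip}(\gn|\a,z)$, together with Lemma \ref{explicit unip int} giving $(-1)^{\#S}\Upsilon^{(z)}(\a)=\delta(\ga=\square)\,\nr(\ga)^{-(z+1)/4}$, reproduces the ``principal'' term $D_F^{1/2}\zeta_{F,\fin}(2s)_\fn\,\delta(\ga=\square)\,\nr(\ga)^{(1-s)/2}\chi_\omega(\fa)^{1/2}$ of Takase's formula after using the duplication formula and the functional equation of $\zeta_F$. The reflected unipotent term $\JJ^0_{\rm unip}(\gn|\a,-z)$ should be absorbed into the geometric sum over $n\in \go$ as the $n^2-4m=0$ summand (corresponding to $n=\pm 2\sqrt{m}$ when $\fa=\square$), in parallel with the $t=\pm 2\sqrt{m}$ term isolated in Appendix \ref{Unipotent terms of Zagier's formula}. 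For the hyperbolic and elliptic parts, one uses the bijection between $\go(S)^\times_+\smallsetminus\{1\}$ and decompositions $m=d_1d_2$ with $d_1\neq d_2$ to recast $\JJ^0_{\rm hyp}$ as a sum over $n\in\go$ with $n^2-4m$ a nonzero square, while $\JJ^0_{\rm ell}$ gives the non-square contributions; Lemma \ref{explicit of hatS} and a totally real analogue of Lemma \ref{explicit of B_Delta} then identify the product of $\zeta_{F_v}$ and $L_{F_v}(\cdot,\varepsilon_\Delta)$ factors arising from $\bfB_\gn^{(z)}(\a|\Delta;\cdot)$ with the Dirichlet series $L_\gn(n,m,\omega;s)$ of \cite[Proposition 3]{Takase}, while the archimedean factor $\prod_{v\in\Sigma_\infty}\Ocal_v^{\sgn(\Delta^{(v)}),(z)}(\cdot)$ is converted to $\Ical_k(\varepsilon^2(n^2-4m),\varepsilon n;s)$ through the formulas relating associated Legendre functions to the integrals $I_{k_v}(\Delta_v,n_v;s)$ recorded on \cite[p.110]{Zagier}.

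The hardest points will be the following. First, the ST trace formula is written for $\PGL_2$, i.e.\ with trivial central character, while Takase allows an arbitrary nebentypus $\omega$; one must either observe that the two formulas coincide after twisting by $\omega^{1/2}$ (so $L_2(s,f,\overline{\chi_\omega})$ becomes $L(\tfrac{z+1}{2},\Sym^2(\pi))$ for the twisted $\pi$ of trivial central character) or invoke the straightforward extension of Theorem \ref{JZtrace} to the nebentypus case; either way the bookkeeping of conductor divisors $S(\gn\gf_\pi^{-1})$ and the factor $W_\gn^{(z)}(\pi)$ needs to be redone to ensure that only newforms for $\Gamma_0(\gn)$ with the prescribed nebentypus are picked out. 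Second, the assumption $h_F^+=1$ is essential: it produces the index $[\go^\times:\go_+^\times]=2^{d_F}$, and the sum $\sum_{\varepsilon\in\go^\times/\go_+^\times}\{\prod_{v\in\Sigma_\infty}\varepsilon_v^{k_v}\}\Ical_k(\varepsilon^2(n^2-4m),\varepsilon n;s)$ must emerge from repackaging the different-coset summation $(t:n)_F$ in $\JJ^0_{\rm ell}(\gn|\a,z)$ after one fixes a totally positive generator $m$ of $\fa$; keeping track of the archimedean signs $\sgn(\Delta^{(v)})$ through this identification is the technical core of the comparison.
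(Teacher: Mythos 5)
Your outline of the comparison itself coincides with the paper's argument: specialize Theorem \ref{JZtrace} with $l=k$, $z=2s-1$, $S=S(m\go)$ and $\a=\otimes_{v\in S}X_{\ord_v(m)}$, rewrite $C_\fa(s)$ as a constant times $\II^0_{\rm cusp}$ using $C^*(f,\fa)\leftrightarrow$ Hecke eigenvalues and the relation \eqref{norm of new form} between $(f,f)$ and $L(1,\Sym^2(\pi_f))$, match $\JJ^0_{\rm unip}(\cdot,z)$ with the first term, $\JJ^0_{\rm unip}(\cdot,-z)$ with the $n^2-4m=0$ summand, $\JJ^0_{\rm hyp}$ with the $n^2-4m=f^2\neq0$ terms and $\JJ^0_{\rm ell}$ with $n^2-4m=Df^2$, $D\neq1$, via Lemma \ref{explicit of hatS}, the totally real analogue of Lemma \ref{explicit of B_Delta}, the identification of $L_\gn(n,m,\omega;s)$ as in \cite[Proposition 3]{Takase}, the Legendre-function/$I_{k_v}$ conversion, and the surjectivity of $\sgn:\go^\times/\go^\times_+\to\{\pm1\}^{\Sigma_\infty}$ under $h_F^+=1$. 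This is exactly how Appendix \ref{Comparison of the ST trace formula with Takase's formula} proceeds.

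The genuine gap is in your treatment of the nebentypus. The paper does \emph{not} recover Takase's formula for general $\omega$: it adds the hypotheses that $2$ splits completely in $F$, that $\omega=\mathbf{1}$ (hence $\gn=\gf_\omega=\go$), and that $|\Re(2s-1)|<\underline{k}-3$, and proves the comparison only in that case. Your two proposed mechanisms for general $\omega$ do not work as stated. Twisting ``by $\omega^{1/2}$'' is problematic: a Hecke character $\eta$ with $\eta^2=\omega^{-1}$ need not exist, and even when it does, the twist $\pi_f\otimes\eta$ changes the conductor, the archimedean type and the parity condition $k_v\equiv e_v\pmod 2$, so the resulting representations do not form the family $\Pi_{\rm cus}(l,\gn)$ with $\gn$ square-free and coprime to $2\go$ to which Theorem \ref{JZtrace} applies; moreover the relation $\Sym^2(\pi_f\otimes\eta)=\Sym^2(\pi_f)\otimes\eta^2$ shows you would be computing $L(s,\Ad\pi_f)=L_2(s,f,\overline{\chi_\omega})$ only under this existence assumption. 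As for ``the straightforward extension of Theorem \ref{JZtrace} to the nebentypus case,'' no such extension is available in \cite{SugiyamaTsuzuki2018} or \cite{SugiyamaTsuzuki2019}, and establishing it is a substantial project (the whole geometric side, in particular the factors $\bfB^{(z)}_\gn$ and the orbital integrals, would have to be redone with a central character); it cannot be invoked as a black box. So either restrict, as the paper does, to $\omega=\mathbf{1}$, $\gn=\go$, $2$ totally split and the narrower $s$-range, or supply the missing trace formula with nebentypus — the bookkeeping you describe (conductors, $W^{(z)}_\gn(\pi)$, oldform/newform separation) does not by itself bridge this.
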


In order to recover Takase's formula from the ST trace formula,
we add the assumptions that $2\in \QQ$ is completely splitting in $F$ and that $\omega=\bf1$ is satisfied, throughout Appendix \ref{Comparison of the ST trace formula with Takase's formula}.
Further we restrict the range of $s$ to $|\Re(2s-1)|<\underline{k}-3$ to use the ST trace formula.
Then, $\fn$ satisfies $\fn=\gf_\omega=\go$.

\subsection{Cuspidal terms of Takase's formula}
Let us transform the left-hand side of Takase's formula.
Define $z \in \CC$ with $|\Re(z)|<\underline{k}-3$ by $s=\frac{z+1}{2}$.
For $\ga=\prod_{v \in S}(\gp_v\cap\go)^{n_v}$ with $n_v\ge1$, set $\a(\bfs) = \otimes_{v \in S}X_{n_v}(q_v^{-s_v/2}+q_v^{s_v/2})$.
For $f \in B(k,\go)$, we denote by $\pi_f$ the cuspidal automorphic representation $\pi_f$
associated with $f$.
Then, we have
$L_2(s,f, \overline{\chi_{\bf1}}) = L(s,\Sym^2(\pi_f))$
by $\omega=\bf1$ and $\gn=\go$. Hence, by the relation
\begin{align}\label{norm of new form}L(1,\Sym^2(\pi_f)) = \{\prod_{v \in \Sigma_\infty}\Gamma_\RR(2)\Gamma_\CC(k_v)\}L_{\fin}(1, \Sym^2(\pi_f))= 2^{-1}D_F^{-1}\{\prod_{v \in \Sigma_\infty}2^{k_v+1}\}(f,f)
\end{align}
from \cite[Proposition 1]{Takase}, the quantity $C_\ga(s)$ is described as
\begin{align*}
	C_\fa(s) = & \sum_{f\in B_k(\go,\bf1)}\frac{L_\fin(s, \Sym^2(f))}{(f,f)}C^*(f,\fa) \\
	= & 2^{-1}D_F^{-1}\bigg\{\prod_{v \in \Sigma_\infty}\frac{2^{k_v+1}}{\Gamma_\RR(s+1)\Gamma_{\CC}(s+k_v-1)} \bigg\}\nr(\fa)^{1/2}\sum_{\pi \in \Pi_{\rm cus}(k,\go)}\frac{L(s,\Sym^2(\pi))}{L(1,\Sym^2(\pi))}\a(\nu_S(\pi)).
\end{align*}
From this and the ST trace formula, the left-hand side of Takase's formula is transformed as
	\begin{align}\label{cuspidal of Takase}
		C_{\fa}(s) = & 2D_F^{1/2-z}(-1)^{\#S}C(k,\go)^{-1} 2^{-1}D_F^{-1}\bigg\{\prod_{v \in \Sigma_\infty}\frac{2^{k_v+1}}{\Gamma_\RR(s+1)\Gamma_{\CC}(s+k_v-1)} \bigg\}\nr(\fa)^{1/2} \\
		& \times (-1)^{\#S}C(k,\go)\II_{\rm cusp}^0(\go|\a,z)\notag \\
		= & D_F^{1/2-z}(-1)^{\#S}\bigg\{\prod_{v \in \Sigma_{\infty}}\frac{k_v-1}{4\pi}\frac{2^{k_v+1}}{\Gamma_\RR(s+1)\Gamma_{\CC}(s+k_v-1)} \bigg\}\nr(\fa)^{1/2} \notag \\
		& \times\bigg[D_F^{z/4}\{\JJ_{\rm unip}^0(\go|\a,z)+\JJ_{\rm unip}^0(\go|\a,-z)\} +\JJ_{\rm hyp}^0(\go|\a,z) +\JJ_{\rm ell}^0(\go|\a,z)\bigg].\notag 
	\end{align}

\subsection{Unipotent terms of Takase's formula}
Recall the expression
\begin{align*}
	\JJ_{\rm unip}^0(\go|\a,z) = & 
	D_F^{-\frac{z}{4}}D_{F}^{z}\zeta_{F,\fin}(2s)\delta(\ga=\square)(-1)^{\#S}\nr(\ga)^{-s/2}\prod_{v \in \Sigma_\infty}\Gamma_\RR(2s)2^{2-2s}\pi^{1-s/2}\frac{\Gamma(k_v+s-1)}{\Gamma(s/2)\Gamma(k_v)}
\end{align*}
with $s=\frac{z+1}{2}$ (cf.\ Lemma \ref{explicit unip int}).
Thus we have the formula
\begin{align}\label{unipotent z of Takase}
	& D_F^{1/2-z}(-1)^{\#S}\bigg\{\prod_{v \in \Sigma_{\infty}}\frac{k_v-1}{4\pi}\frac{2^{k_v+1}}{\Gamma_\RR(s+1)\Gamma_{\CC}(s+k_v-1)} \bigg\}\nr(\fa)^{1/2}\times D_F^{z/4} \JJ_{\rm unip}^0(\go|\a,z)\\
	= & D_F^{1/2}\zeta_{F,\fin}(2s)\nr(\ga)^{\frac{1-s}{2}}\delta(\ga=\square) \prod_{v \in \Sigma_\infty}\frac{(4\pi)^{k_v-1}}{\Gamma(k_v-1)}.\notag
\end{align}
This coincides with the first term of the right-hand side of Takase's formula.
In a similar way, we have
\begin{align*}
	\JJ_{\rm unip}^0(\go|\a,-z) = &  
	D_F^{\frac{z-2}{4}}\zeta_{F,\fin}(2s-1)\delta(\ga=\square)(-1)^{\#S}\nr(\ga)^{\frac{s-1}{2}}\prod_{v \in \Sigma_\infty}\Gamma_\RR(2s-1) 2^{2s}\pi^{\frac{s+1}{2}}\frac{\Gamma(k_v-s)}{\Gamma(\frac{1-s}{2})\Gamma(k_v)}
\end{align*}
and whence
\begin{align}\label{unipotent -z of Takase}
	& D_F^{1/2-z}(-1)^{\#S}\bigg\{\prod_{v \in \Sigma_{\infty}}\frac{k_v-1}{4\pi}\frac{2^{k_v+1}}{\Gamma_\RR(s+1)\Gamma_{\CC}(s+k_v-1)} \bigg\}\nr(\fa)^{1/2}\times D_F^{z/4} \JJ_{\rm unip}^0(\go|\a,-z)\\
	= & D_F^{-z/2}\zeta_{F,\fin}(2s-1)\times 
	\delta(\ga=\square)\nr(\ga)^{s/2} \notag \\
	&\times \prod_{v \in \Sigma_\infty}\frac{k_v-1}{4\pi}\frac{2^{k_v+1}}{\Gamma_\RR(s+1)\Gamma_{\CC}(s+k_v-1)} \Gamma_\RR(2s-1)2^{2s}\pi^{\frac{s+1}{2}}\frac{\Gamma(k_v-s)}{\Gamma(\frac{1-s}{2})\Gamma(k_v)} \notag \\
	=& D_F^{1/2-s}\zeta_{F,\fin}(2s-1)\delta(\ga=\square)\nr(\ga)^{s/2}
	\prod_{v \in \Sigma_\infty}\frac{\Gamma(1/2)\Gamma(k_v-s)\Gamma(s-1/2)}{\Gamma(k_v-1)\Gamma(s+k_v-1)}\cos\left(\frac{\pi}{2}s\right)(4\pi)^{s+k_v-2}2^{s+1}. \notag
\end{align}
In Takase's formula, we note the formula
\begin{align*}
	\sum_{\varepsilon \in \go^\times/\go^\times_+}\{\prod_{v \in \Sigma_\infty}\varepsilon_v^{k_v}\}\Ical_k(0,\varepsilon n;s)=\prod_{v \in \Sigma_\infty}\frac{\Gamma(s-1/2)\Gamma(k_v-s)}{\Gamma(k_v-1/2)}|n|_v^{s-k_v}(-1)^{k_v/2}2\cos\left(\frac{\pi}{2}s\right),
\end{align*}
which is proved by \cite[(13)]{Zagier} and the surjectivity of the mapping $\sgn : \go^\times/\go^{\times}_+\rightarrow\{\pm1\}^{\Sigma_{\infty}}$ under $h_F^+=1$.
Thus, the term for $n^2-4m=0$ of Takase's formula is described as
\begin{align*}
	& \frac{1}{2}\bigg\{\prod_{v \in \Sigma_\infty}2^{k_v}(-1)^{k_v/2}(4\pi)^{s+k_v-2}\frac{\Gamma(1/2)\Gamma(k_v-1/2)}{\Gamma(s+k_v-1)\Gamma(k_v-1)}\bigg\}D_F^{1/2-s}\\
	& \times \sum_{\substack{n \in \go\\ n^2-4m=0}}\sum_{\varepsilon \in \go^\times/\go^\times_+} \{\prod_{v \in \Sigma_\infty}\varepsilon_v^{k_v}\}\Ical_{k}(0,\varepsilon n ;s)L_{\go}(n,m,{\bf 1};s) \\
	= & 2\times \frac{1}{2}\bigg\{\prod_{v \in \Sigma_\infty}2^{k_v}(-1)^{k_v/2}(4\pi)^{s+k_v-2}\frac{\Gamma(1/2)\Gamma(k_v-1/2)}{\Gamma(s+k_v-1)\Gamma(k_v-1)}\\
	& \times\frac{\Gamma(s-1/2)\Gamma(k_v-s)}{\Gamma(k_v-1/2)}(-1)^{k_v/2}2\cos\left(\frac{\pi}{2}s\right)\bigg\}\\
	& \times \{\prod_{v \in \Sigma_\infty}|n_v|_v^{s-k_v} \}\times D_F^{1/2-s} \delta(\ga=\square) \zeta_{F,\fin}(2s-1)\{\prod_{v \in \Sigma_\infty}2^{-k_v}n_v^{k_v}\}\\
	= & 2\times \frac{1}{2}\bigg\{\prod_{v \in \Sigma_\infty}2^{s+1}(4\pi)^{s+k_v-2}\frac{\Gamma(1/2)\Gamma(s-1/2)\Gamma(k_v-s)}{\Gamma(s+k_v-1)\Gamma(k_v-1)}
	\cos\left(\frac{\pi}{2}s\right)\bigg\}\\
	& \times \nr(\fa)^{s/2}\times D_F^{1/2-s} \delta(\ga=\square) \zeta_{F,\fin}(2s-1).
\end{align*}
This exactly coincides with the unipotent term for $-z$ of the ST trace formula.

\subsection{Hyperbolic terms of Takase's formula}
\label{Hyperbolic terms of Takase's formula}
For $t,m \in \go$, the equality $t^2-4m=Df^2$ means
that $f$ is an integer of $F$ and $D$ generates the relative discriminant of the quadratic extension
$F(\sqrt{t^2-4n})/F$. Such a $D$ with $D\neq 1$ is called a fundamental discriminant over $F$.
For a fundamental discriminant $D$ over $F$, let $\chi_D$ denotes the character
over the ideals of $F$
corresponding to $F(\sqrt{D})/F$ by class field theory.
We set $\chi_D=\bf1$ for $D=1$.

Recall that $m$ is now taken to be totally positive such that $\ga=(m)$.
By abuse of notation, $\gp_v$ for $v \in \Sigma_\fin$ is regarded as a prime ideal $\gp_v\cap\go$ of $\go$.

Here is a lemma seen as \cite[Proposition 3]{Takase}.
\begin{lem}For $n \in \go$, let $f$ be an element of $\go$ such that $n^2-4m=Df^2$
	with $D$ being a fundamental discriminant over $F$.
	When $f=0$, we have
	\begin{align*}
		L_\fn(n,m, \omega; s) =\{\prod_{v \in \Sigma_\infty}2^{-k_v}n_v^{k_v}\}\chi_\omega(n/2)\zeta_{F,\fin}(2s-1)\nr(\fn)^{1-s}\prod_{v \in S(\fn)}(1+q_v^{s-1})(1-q_v^{-s}).
	\end{align*}
Here we put $D=1$ for $f=0$.
In particular, if $\omega=\bf 1$ and $\gn=\go$, then we have
	\begin{align*}
		L_\go(n,m, {\bf1}; s) =  \{\prod_{v \in \Sigma_\infty}2^{-k_v}n_v^{k_v}\}\zeta_{F,\fin}(2s-1).
	\end{align*}

	When $f \neq0$, we have
	\begin{align*}
		L_\fn(n,m, \omega; s) =& L_{\fin}(s,\chi_D) \times \prod_{v \in S(\fn)}(1-q_v^{-s})\times \sum_{\fb | f_\fn}\nr(f_\fn\fb^{-1})^{1-2s}\prod_{v \in S(\fb)}(1-\chi_D(\fp_v)q_v^{-s})\\
		& \times \sum_{\fb|\fn(f)\fn^{-1}}C_{n,m}(\omega, \fn(f)\fb^{-1})\nr(\fn(f)\fb^{-1})^{-s}\prod_{v \in S(\fb)}(1-\chi_D(\fp_v)q_v^{-s}),
	\end{align*}
	where $L_{\fin}(s,\chi_D)$ is the Hecke $L$-function associated with $\chi_D$ without archimedean local $L$-factors, $f_\gn=\prod_{v \notdivide \gn}\gp_v^{\ord_{v}(f)}$,
	$\gn(f)=\prod_{v \in S(\gf)}(\gp_v)^{c(\gp_v)}$ with $c(\gp_v)=\max(2\ord_v(f)+1, \ord_v(\gn f))$.
	In particular, if $\omega=\bf 1$ and $\gn=\go$, then we have
	\begin{align*}
		L_\go(n,m, {\bf1}; s)
		= & L_\fin(s,\chi_D) \times \sum_{\fb | f}\nr(f\fb^{-1})^{1-2s}\prod_{v \in S(\fb)}(1-\chi_D(\fp_v)q_v^{-s}) \times C_{n,m}({\bf 1}, \fo)
	\end{align*}
	and \[C_{n,m}({\bf1}, \go) = \{\prod_{v \in \Sigma_\infty}m_v^{k_v/2}\}\, \#\{t\in \go/\go \ | \ t^2+nt+m \in \go\}=\prod_{v \in \Sigma_\infty}m_v^{k_v/2}.\]
\end{lem}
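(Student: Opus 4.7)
The strategy is to compute the Dirichlet series $L_\fn(n,m,\omega;s)$ directly from its definition as an Euler product and then match the local factors with the structure of $\chi_D$ and the divisor sums appearing on the right-hand side. Write $t^2+nt+m \equiv (t+n/2)^2 - \tfrac{n^2-4m}{4} \pmod{\fb}$, so the congruence condition defining $C_{n,m}(\omega,\fb)$ is governed by the square root of $n^2-4m=Df^2$ modulo $\fb$. By multiplicativity of $\fb \mapsto C_{n,m}(\omega,\fb)$ (when $\omega=\mathbf{1}$ and similarly twisted by $\lambda_v$ on the places $v\in S(\gn)$), one reduces to a local Euler factor computation at each finite place $v$.

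For the case $f=0$, i.e.\ $n^2=4m$, the polynomial factors as $(t+n/2)^2$; thus the number of $t\pmod{\gp_v^e}$ with $(t+n/2)^2\equiv 0\pmod{\gp_v^e}$ equals $q_v^{\lfloor e/2\rfloor}$. A direct summation gives
\[
\sum_{e\ge 0} q_v^{\lfloor e/2\rfloor - es}
= (1+q_v^{-s})\sum_{j\ge 0}q_v^{j(1-2s)}
= \frac{1+q_v^{-s}}{1-q_v^{1-2s}},
\]
so $\prod_v \tfrac{1+q_v^{-s}}{1-q_v^{1-2s}} = \tfrac{\zeta_{F,\fin}(s)}{\zeta_{F,\fin}(2s)}\,\zeta_{F,\fin}(2s-1)$. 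Together with the prefactor $\{\prod_v m_v^{k_v/2}\} = \{\prod_v(n_v/2)^{k_v}\}$ from the definition of $C_{n,m}$ and the $\tfrac{\zeta_{F,\fin}(2s)_\fn}{\zeta_{F,\fin}(s)}$ in the definition of $L_\fn$, the bulk zetas cancel and leave $\zeta_{F,\fin}(2s-1)\{\prod_v 2^{-k_v}n_v^{k_v}\}$. The level-$\fn$ correction $\nr(\fn)^{1-s}\prod_{v\in S(\fn)}(1+q_v^{s-1})(1-q_v^{-s})$ comes from replacing the unramified local factor at $v\in S(\fn)$ by the twisted local sum with $\lambda_v$ and using $\chi_\omega(n/2)$; the $\omega=\mathbf{1}$, $\fn=\fo$ specialization is then immediate.

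For the case $f\neq 0$, let $v\in\Sigma_\fin$ and write $e_v=\ord_v(f)$. A standard count of quadratic roots modulo $\gp_v^e$ with discriminant of valuation $2e_v + \ord_v(D)$ shows the following: for $v\notin S(\fn f)$, the number of solutions of $t^2+nt+m\equiv 0\pmod{\gp_v^e}$ equals $1+\chi_D(\gp_v)$ (Hensel) when $e_v=0$, and more generally a polynomial in $q_v^{-s}$ whose summation over $e\ge 0$ gives the local factor of $L_\fin(s,\chi_D)\sum_{0\le j\le e_v}\nr(\gp_v^{e_v-j})^{1-2s}(1-\chi_D(\gp_v)q_v^{-s})^{\delta(j>0)}$; this is the local shape of the divisor sum $\sum_{\gb\mid f_\fn}\nr(f_\fn\gb^{-1})^{1-2s}\prod_{v\in S(\gb)}(1-\chi_D(\gp_v)q_v^{-s})$. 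For $v\in S(\fn)$, one further weights each root by $\overline{\lambda}_v(t)$, giving the factor $\sum_{\gb\mid \fn(f)\fn^{-1}}C_{n,m}(\omega,\fn(f)\gb^{-1})\nr(\fn(f)\gb^{-1})^{-s}\prod_{v\in S(\gb)}(1-\chi_D(\gp_v)q_v^{-s})$ after the global assembly; the factor $\prod_{v\in S(\fn)}(1-q_v^{-s})$ absorbs the Euler factor of $\zeta_{F,\fin}(s)^{-1}$ from $\zeta_{F,\fin}(2s)_\fn/\zeta_{F,\fin}(s)$ at places in $S(\fn)$. Taking the product yields the claimed formula; specializing $\omega=\mathbf{1}$, $\fn=\fo$ leaves only $L_\fin(s,\chi_D)$, the divisor sum over $\gb\mid f$, and $C_{n,m}(\mathbf{1},\fo) = \prod_v m_v^{k_v/2}$ (as $\go/\go$ is trivial).

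The main obstacle is the bookkeeping in the local count of roots of $t^2+nt+m$ modulo $\gp_v^e$ for $v\mid f$, especially when $v$ also divides $\fn$ and the root is weighted by the character $\overline{\lambda}_v$. One must track the three regimes $e\le e_v$, $e_v<e\le 2e_v$, $e>2e_v$ (including the dependence on whether $\chi_{D,v}$ is ramified, unramified nontrivial, or trivial), reorganize the resulting geometric sums into the divisor indexing $\gb\mid f_\fn$, and finally re-express the remainder as a sum over $\gb\mid \fn(f)\fn^{-1}$. Once this local analysis is in place, the global identification follows by taking the product and collecting zeta factors; the $\omega=\mathbf{1}$, $\fn=\fo$ statements are direct specializations and serve as sanity checks against the case $f=0$ computed above.
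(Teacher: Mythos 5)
The paper itself contains no proof of this statement: it is imported as a known result, introduced by the sentence ``Here is a lemma seen as \cite[Proposition 3]{Takase}'', so the paper's ``proof'' is a citation to Takase. Your plan of establishing it from scratch --- multiplicativity of $\fb\mapsto C_{n,m}(\omega,\fb)$ via the Chinese remainder theorem, then a place-by-place count of roots of $t^2+nt+m$ modulo $\gp_v^e$ matched against $\chi_D$ and the divisor sums --- is the natural and standard route (it is essentially how Zagier and Takase prove such identities), and your $f=0$, $\omega=\mathbf 1$, $\gn=\go$ computation is complete and correct: the count $q_v^{\lfloor e/2\rfloor}$, the local series $(1+q_v^{-s})/(1-q_v^{1-2s})$, the cancellation against $\zeta_{F,\fin}(2s)/\zeta_{F,\fin}(s)$, and $m=(n/2)^2$ with $k_v$ even do give $\{\prod_{v\in\Sigma_\infty}2^{-k_v}n_v^{k_v}\}\zeta_{F,\fin}(2s-1)$, which is the only case the paper actually uses in Appendix B.

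The gap is that for $f\neq 0$, and for the general-level statement with ramified $\omega$ when $f=0$, you assert rather than carry out the computation that constitutes the content of the lemma. The root count modulo $\gp_v^e$ in the regimes $e\le 2\ord_v(f)$ and $e>2\ord_v(f)$ (split, inert, ramified $\chi_{D,v}$), the vanishing criterion for the character sum at $v\in S(\fn)$ (nonvanishing forces $\lceil e/2\rceil\ge \ord_v(\gf_\omega)$), and above all the reassembly of the resulting local series into exactly the two divisor sums over $\fb\mid f_\fn$ and $\fb\mid \fn(f)\fn^{-1}$ with $c(\gp_v)=\max(2\ord_v(f)+1,\ord_v(\gn f))$ are all deferred; ``the global identification follows'' is precisely the statement to be proved. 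The same applies to the $f=0$, general-$\fn$ case: your one-line appeal to a ``level-$\fn$ correction'' does not verify that the restricted sums $\sum_{e}q_v^{\lfloor e/2\rfloor-es}$ (over $e$ with $\lceil e/2\rceil\ge\ord_v(\gf_\omega)$), together with the relation between $\prod_{v\in S(\fn)}\overline{\lambda}_v(-n/2)$ and $\chi_\omega(n/2)$, actually produce the factor $\nr(\fn)^{1-s}\prod_{v\in S(\fn)}(1+q_v^{s-1})(1-q_v^{-s})$; the exponent of $\nr(\fn)$ coming out of that local sum is exactly the kind of detail that must be reconciled with the stated formula, not waved through. As written the proposal is an outline of the correct strategy with its decisive step missing; to count as a proof it must either complete this local analysis or do what the paper does and simply invoke \cite[Proposition 3]{Takase}.
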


\begin{lem}For $\Delta\in\go$ with $\Delta=Df^2$, we have
	\begin{align*}B_\Delta^F(s) := \sum_{\gb|f }\nr(f\gb^{-1})^{1-2s}\prod_{v \in S(\gb)}(1-\chi_D(\gp_v)\nr(\gp_v)^{-s})=\sum_{\fc|f}\mu(\fc)\chi_D(\fc)\nr(\fc)^{-s}\s_{1-2s}(f{\mathfrak \fc}^{-1}),
	\end{align*}
where $\mu$ is the M\"obius function for $F$.
\end{lem}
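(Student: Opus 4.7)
The claim is a purely formal identity in the divisor lattice of the principal ideal $f\go$; neither $\chi_D$ nor the specific value of $\Delta$ plays a role beyond being a character on ideals of $\go$. The plan is first to expand the finite Euler product appearing in the left-hand side as a Möbius sum, then to swap the order of summation, and finally to recognise the inner sum as a divisor function.

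First I would rewrite the product using the multiplicativity of $\chi_D$ and $\nr$: for any ideal $\gb$ of $\go$,
\begin{align*}
\prod_{v\in S(\gb)}(1-\chi_D(\gp_v)\nr(\gp_v)^{-s})=\sum_{\fc\mid\gb,\ \fc\ \mathrm{sq.\text{-}free}}(-1)^{\omega(\fc)}\chi_D(\fc)\nr(\fc)^{-s}=\sum_{\fc\mid\gb}\mu(\fc)\chi_D(\fc)\nr(\fc)^{-s},
\end{align*}
where $\omega(\fc)$ counts prime factors and in the last equality one uses that $\mu(\fc)=0$ unless $\fc$ is square-free. Substituting into the definition of $B_\Delta^F(s)$ and interchanging the two summations yields
\begin{align*}
B_\Delta^F(s)=\sum_{\fc\mid f}\mu(\fc)\chi_D(\fc)\nr(\fc)^{-s}\sum_{\substack{\gb\mid f\\ \fc\mid\gb}}\nr(f\gb^{-1})^{1-2s},
\end{align*}
where in the outer sum only square-free $\fc$ contribute thanks to $\mu$, and the constraint $\fc\mid\gb$ is then compatible with $\gb\mid f$ only when $\fc\mid f$, which is already built into the range.

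The remaining step is to evaluate the inner sum. Writing $\gb=\fc\ge$, the condition $\fc\mid\gb\mid f$ becomes $\ge\mid f\fc^{-1}$ (which makes sense since $\fc\mid f$), and
\begin{align*}
\nr(f\gb^{-1})=\nr(f\fc^{-1})/\nr(\ge).
\end{align*}
Replacing $\ge$ by the complementary divisor $\ge'=f\fc^{-1}\ge^{-1}$, which also ranges over all divisors of $f\fc^{-1}$, gives
\begin{align*}
\sum_{\substack{\gb\mid f\\ \fc\mid\gb}}\nr(f\gb^{-1})^{1-2s}=\sum_{\ge'\mid f\fc^{-1}}\nr(\ge')^{1-2s}=\sigma_{1-2s}(f\fc^{-1}),
\end{align*}
which assembles to the desired formula.

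The only potential subtlety — and it is genuinely minor — lies in checking that the substitutions $\gb=\fc\ge$ and $\ge\leftrightarrow\ge'$ are bijections on the correct index sets, which follows at once from unique factorisation of ideals in $\go$. No analytic input is required, and the identity holds as a formal equality of Dirichlet-type series in $s$.
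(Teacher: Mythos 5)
Your proposal is correct and follows essentially the same route as the paper's own proof: expand the finite Euler product over $S(\gb)$ as a M\"obius sum $\sum_{\fc\mid\gb}\mu(\fc)\chi_D(\fc)\nr(\fc)^{-s}$, interchange the two summations, and evaluate the inner sum via the substitution $\gb=\fc\fm$ as $\sigma_{1-2s}(f\fc^{-1})$. The only cosmetic difference is that you make the complementary-divisor bijection explicit, which the paper leaves implicit.
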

\begin{proof}
	Let $\gb=\prod_{j=1}^g\gp_j^{\nu_j}$ be the prime ideal decomposition.
	The left-hand side equals
	\begin{align*}
		& \sum_{\gb|f}\nr(f\gb^{-1})^{1-2s}\sum_{\substack{l\le g\\ \ 1\le i_1<\cdots<i_l\le g}}(-1)^{l}\chi_{D}(\gp_{i_1})\cdots\chi_D(\gp_{i_l})\nr(\gp_{i_1}\cdots\gp_{i_l})^{-s} \\
		= & \sum_{\gb|f}\nr(f\gb^{-1})^{1-2s} \sum_{\fc|\fb}\mu(\fc)\chi_D(\fc)\nr(\fc)^{-s} 
		=  \sum_{\fc|f} \sum_{\substack{\fb|f\\ \fb=\fc\fm (\exists \fm)}}\nr(f\gb^{-1})^{1-2s}\mu(\fc)\chi_D(\fc)\nr(\fc)^{-s} \\
		= & \sum_{\fc|f}\mu(\fc)\chi_D(\fc)\nr(\fc)^{-s} \sum_{\fm|f\fc^{-1}}\nr(f\fc^{-1}\fm^{-1})^{1-2s}.
	\end{align*}
	This completes the proof.
\end{proof}

From these lemmas, we obtain
\begin{align*}
	& \frac{1}{2}\bigg(\prod_{v \in \Sigma_\infty}(-2\sqrt{-1})^{k_v}(4 \pi)^{s+k_v-2}\frac{\Gamma(1/2)\Gamma(k_v-1/2)}{\Gamma(s+k_v-1)\Gamma(k_v-1)}\bigg)
	D_F^{1/2-s} \\
	& \times \sum_{\substack{n \in \go\\ n^2-4m\neq0}}\sum_{\varepsilon \in \go^\times/\go_{+}^{\times}}
	\bigg\{\prod_{v \in \Sigma_\infty}\frac{\varepsilon_v^{k_v}\Gamma(k_v)}{\Gamma(k_v-1/2)\Gamma(1/2)}I_{k_v}(\varepsilon_v^2(n_v^2-4m_v), \varepsilon_vn_v, s)\bigg\}
	L_\gn(n,m,{\bf 1};s)\\
	= &
	\frac{1}{2}\bigg\{\prod_{v \in \Sigma_\infty}(2\sqrt{-1})^{k_v}(4 \pi)^{s+k_v-2}\frac{k_v-1}{\Gamma(s+k_v-1)}\bigg\}
	D_F^{1/2-s} \\
	& \times \sum_{\substack{n \in \go\\ n^2-4m=Df^2\neq0}}\sum_{\varepsilon \in \go^\times/\go_{+}^{\times}}
	\{\prod_{v \in \Sigma_\infty}\varepsilon_v^{k_v}I_{k_v}(\varepsilon_v^2(n_v^2-4m_v), \varepsilon_vn_v, s)\} L_{\fin}(s,\chi_D) \\
	& \times\sum_{\gb|f}\nr(f\gb^{-1})^{1-2s}\prod_{v \in S(\gb)}(1-\chi_D(\gp_v)q_v^{-s})\{\prod_{v \in \Sigma_\infty}m_v^{k_v/2}\}\\
	= &
	\frac{1}{2}\bigg\{\prod_{v \in \Sigma_\infty}m_v^{k_v/2}(2\sqrt{-1})^{k_v}(4 \pi)^{s+k_v-2}\frac{k_v-1}{\Gamma(s+k_v-1)}\bigg\}
	D_F^{1/2-s} \\
	& \times \sum_{\substack{n \in \go \\ \Delta:=n^2-4m \\
			\Delta=Df^2\neq0}}
	\bigg\{\prod_{v \in \Sigma_\infty}(I_{k_v}(n_v^2-4m_v, n_v, s)+I_{k_v}(n_v^2-4m_v, -n_v, s))\bigg\} \times L_{\fin}(s,\chi_D) B_\Delta^F(s).
\end{align*}
Here we use the fact that the mapping $\sgn : \go^\times/\go^{\times}_+\rightarrow\{\pm1\}^{\Sigma_{\infty}}$ is surjective under $h_F^+=1$.

We consider the hyperbolic term.
As for the ST trace formula,
we recall 
\begin{align*}\JJ_{\rm hyp}^0(\go|\a,z) 
	= & \frac{1}{2}D_F^{(z-1)/2}\zeta_{F,\fin}(\tfrac{1+z}{2})\sum_{a \in \go(S)_+^\times-\{1\}}
	\bfB_\go^{(z)}(\a|1; \tfrac{a}{(a-1)^2}\go)\,\prod_{v \in \Sigma_\infty}\Gamma_\RR(\tfrac{1+z}{2})\Ocal_v^{+,(z)}(\tfrac{a+1}{a-1}).
\end{align*}
Here we notice \begin{align*}
	\Gamma_\RR(\tfrac{z+1}{2})\Ocal_v^{+,(z)}(b) = & 2^{k_v+s-1}m_v^{k_v/2}f_v^{-s}\pi^{(-s-1)/2}\Gamma(\tfrac{1+s}{2})(-1)^{k_v/2}\{I_{k_v}(\Delta,t;s)+I_{k_v}(\Delta,-t;s)\}
\end{align*}
with $\Delta=t^2-4m$. In the same way as \eqref{identity of B} for $F=\QQ$, we obtain
\begin{align*}
	(-1)^{\#S}\bfB_{\go}(\a|1; a(a-1)^{-2}\go)=\nr(\ga)^{-1/2}\nr(f)^{s}B_\Delta^F(s)
\end{align*}
for $a=\frac{t+f}{t-f} \in \go(S)_{+}^{\times}-\{1\}$ such that $t^2-4m=f^2$.
Hence, we obtain
\begin{align*}
	\JJ_{\rm hyp}^0(\go|\a,z)= & \frac{1}{2}D_F^{(z-1)/2}\zeta_{F, \fin}(\tfrac{1+z}{2})\sum_{\substack{t \in \go \\ \Delta:=t^2-4m=f^2\neq 0}}
	(-1)^{\# S}\nr(\ga)^{-1/2}B_\Delta^F(s) \\
	& \times
	\,\big[\prod_{v \in \Sigma_\infty}2^{k_v+s-1}m_v^{k_v/2}\pi^{(-s-1)/2}\Gamma(\tfrac{1+s}{2})(-1)^{k_v/2}\{I_{k_v}(\Delta,t;s)+I_{k_v}(\Delta,-t;s)\}\big],
\end{align*}
and whence
\begin{align}\label{hyperbolic of Takase}
	& D_F^{1/2-z}(-1)^{\#S}\bigg\{\prod_{v \in \Sigma_{\infty}}\frac{k_v-1}{4\pi}\frac{2^{k_v+1}}{\Gamma_\RR(s+1)\Gamma_{\CC}(s+k_v-1)} \bigg\}\nr(\fa)^{1/2}\times \JJ_{\rm hyp}^0(\go|\a,z)\\
	= & \frac{1}{2}D_F^{1/2-s} \bigg\{\prod_{v \in \Sigma_{\infty}}m_v^{k_v/2}(-1)^{k_v/2}(4\pi)^{s+k_v-2}2^{k_v}\frac{k_v-1}{\Gamma(s+k_v-1)} \bigg\} \notag \\
	& \times \sum_{\substack{t \in \go \\ \Delta:=t^2-4m=f^2\neq0}}[\prod_{v \in \Sigma_{\infty}}\{I_{k_v}(t^2-4m, t, s)+I_{k_v}(t^2-4m, -t, s)\}]\zeta_{F, \fin}(s)B_{\Delta}^F(s).\notag
\end{align}
This coincides with the term for $n^2-4m=f^2 \neq 0$ of Takase's formula.

\subsection{Elliptic terms of Takase's formula and comparison}
We recall the condition $h_F^+=1$.
Then, the elliptic term is descirebed in the same way as the hyperbolic term:
\begin{align}\label{elliptic of Takase}
	& D_F^{1/2-z}(-1)^{\#S}\bigg\{\prod_{v \in \Sigma_{\infty}}\frac{k_v-1}{4\pi}\frac{2^{k_v+1}}{\Gamma_\RR(s+1)\Gamma_{\CC}(s+k_v-1)} \bigg\}\nr(\fa)^{1/2}\times \JJ_{\rm ell}^0(\go|\a,z)\\
	= & \frac{1}{2}D_F^{1/2-s} \bigg\{\prod_{v \in \Sigma_{\infty}}m_v^{k_v/2}(-1)^{k_v/2}(4\pi)^{s+k_v-2}2^{k_v}\frac{k_v-1}{\Gamma(s+k_v-1)} \bigg\} \notag \\
	& \times \sum_{\substack{t \in \go \\ \Delta:=t^2-4m \\
			\Delta =Df^2,\ D\neq1}}[\prod_{v \in \Sigma_{\infty}}\{I_{k_v}(t^2-4m, t, s)+I_{k_v}(t^2-4m, -t, s)\}]L_{\fin}(s, \varepsilon_\Delta)B_{\Delta}^F(s). \notag
\end{align}
This coincides with the term for $n^2-4m=Df^2\neq 0$ with $D\neq 1$ of Takase's formula.

From \eqref{cuspidal of Takase}, \eqref{unipotent z of Takase}, \eqref{unipotent -z of Takase},
\eqref{hyperbolic of Takase} and \eqref{elliptic of Takase},
the ST trace formula recovers Takase's formula for $\omega=\bf1$ and $\gn=\go$ (Theorem \ref{Takase formula}).

\section{Comparison of the ST trace formula with Mizumoto's formula}
\label{Comparison of the ST trace formula with Mizumoto's formula}

Mizumoto \cite{Mizumoto} gave a new proof of Zagier's formula using Poincar\'e series.
His method is valid for holomorphic Hilbert modular forms over a totally real number field $F$ of parallel weight $k$ and level
$\SL_2(\go)$ with trivial nebentypus.
In Appendix \ref{Comparison of the ST trace formula with Mizumoto's formula}, we prove that the ST trace formula recovers
Mizumoto's formula.

Let $F$ be a totally real number field with $d_F=[F:\QQ]<\infty$.
For $a \in F$ and a place $v$ of $F$, we denote by $a_v$ the image of $a$ under the embedding $F \hookrightarrow F_v$.
Assume that the narrow class number $h_F^+$ of $F$ equals one.
Then, the different of $F/\QQ$ has a totally positive generator $\mathfrak{d} \in \go$.
Mizumoto's formula \cite[(4.6) and (5.9)]{Mizumoto} is stated as follows.
\begin{thm}[Mizumoto's formula \cite{Mizumoto}]\label{Mizumoto formula}
	Let $k$ be an even integer such that $k\ge 4$.
	For $\fa = (m)$ with $m\gg0$ and $s \in \CC$ such that $2-k<\Re(s)<k-1$, i.e., $|\Re(2s-1)|<2k-3$, we have
	\begin{align*}
		&\left(\frac{\Gamma(k-1)}{(4\pi)^{k-1}}\right)^{d_F}D_F^{-1/2}\sum_{f \in \Bcal(k,\fo)}\frac{L_\fin(s, \Sym^2(f))}{(f,f)} \nr(\ga)^{k/2-1}C^*(f,\fa)\\
		= & \delta(\ga=\square)(-1)^{d_F k/2}D_F^{-s}\nr(\fa)^{(k-1)/2+(s-1)/2} \, (2\pi)^{d_F}2^{d_F(s-1)}
		  \bigg(\frac{\pi^{s-1/2}\Gamma(\frac{k-s}{2})\Gamma(s-1/2)}{\Gamma(\frac{k+s}{2})\Gamma(\frac{k+s-1}{2})\Gamma(\frac{1-k+s}{2})}\bigg)^{d_F}\\
		&\times \zeta_{F,\fin}(2s-1)
		 +\delta(\ga=\square)\nr(\fa)^{(k-1-s)/2}\zeta_{F,\fin}(2s) \\
		& +(-1)^{d_F k/2}D_F^{-1}\nr(\fa)^{(k-1)/2}2^{d_F-1}\pi^{d_F}\sum_{\substack{t \in \go\\ t^2-4m\neq 0}}\{\prod_{v \in \Sigma_\infty}I_v(t,m,s)\}L_F(s,t^2-4m),
	\end{align*}
	where
$C^*(f,\fa)$ is the modified Fourier coefficient used in Appendix \ref{Comparison of the ST trace formula with Takase's formula}, and
$I_v(t,m,s)$ is a meromorphic continuation to $\CC$
	of the integral
	\[I_v(t,m,s)=
	2\int_{0}^{\infty}\cos\left(\frac{2\pi t_v}{{\mathfrak d}_v}y\right)y^{-s}J_{k-1}\bigg(4\pi\frac{\sqrt{m_v}}{{\mathfrak d}_v}y\bigg)dy, \qquad 1/2<\Re(s)<k\]
as a function in $s$ by \cite[(2.20)]{Mizumoto}.
	The $L$-function $L_F(s,t^2-4m)$ is given by
	\begin{align*}
		L_F(s,t^2-4m)
		=\begin{cases}
			\zeta_{F,\fin}(2s-1)  & (t^2-4m=0),\\
			L_{\fin}(s, \chi_{D})B_\Delta^F(s) &  (t^2-4m=\Delta=Df^2\neq 0)
		\end{cases}
	\end{align*}
as in \cite[Proposition 1]{Mizumoto}. Here $D$ is taken as a fundamental discriminant over $F$
in the sense of Appendix \ref{Hyperbolic terms of Takase's formula}.
\end{thm}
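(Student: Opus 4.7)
The plan is to specialize the ST trace formula (Theorem \ref{JZtrace}) exactly as in Appendix \ref{Comparison of the ST trace formula with Takase's formula}, adapted to the parallel-weight case $l_v=k$ for all $v\in\Sigma_\infty$, and then to match each geometric piece with the corresponding term of Mizumoto's formula. Concretely, I take $h_F^+=1$, $\omega=\mathbf{1}$, $\gn=\go$, $\ga=(m)$ with $m\gg 0$, $S=S(m\go)$, the test function $\a(\bfs)=\bigotimes_{v\in S}X_{\ord_v(m)}(q_v^{-s_v/2}+q_v^{s_v/2})$, and $z=2s-1$ restricted to $|\Re(2s-1)|<k-3$. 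Using the Petersson norm identity \eqref{norm of new form} and the manipulation in \eqref{cuspidal of Takase}, the cuspidal side $(-1)^{\#S}C(k,\go)\II_{\rm cusp}^0(\go|\a,z)$ becomes an explicit scalar multiple of the left-hand side of Mizumoto's formula.

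Next I would evaluate the two unipotent contributions. Specializing \eqref{unipotent z of Takase} to $l_v=k$ produces from $\JJ_{\rm unip}^0(\go|\a,z)$ the summand $\delta(\ga=\square)\nr(\ga)^{(k-1-s)/2}\zeta_{F,\fin}(2s)$, which is the second explicit term of Mizumoto's formula. The dual contribution $\JJ_{\rm unip}^0(\go|\a,-z)$, handled as in \eqref{unipotent -z of Takase} together with the functional equation of $\zeta_F$ and the duplication formula $\Gamma_\RR(s)\Gamma_\RR(s+1)=\Gamma_\CC(s)$, collapses after collecting $\Gamma$-factors into the first explicit term of Mizumoto's formula, namely the one carrying the factor $\Gamma\!\left(\tfrac{k-s}{2}\right)\Gamma(s-1/2)/\bigl(\Gamma\!\left(\tfrac{k+s}{2}\right)\Gamma\!\left(\tfrac{k+s-1}{2}\right)\Gamma\!\left(\tfrac{1-k+s}{2}\right)\bigr)$ and $\zeta_{F,\fin}(2s-1)$.

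For the hyperbolic and elliptic terms I would follow verbatim the strategy of Appendix \ref{Comparison of the ST trace formula with Takase's formula}. The non-archimedean factor $\bfB_\go^{(z)}(\a|\Delta;n\gf_\Delta^{-2})$ is evaluated via Lemma \ref{explicit of hatS} and equals $\nr(\ga)^{-1/2}\nr(\gf_\Delta)^s B_\Delta^F(s)$, while the sum over $a\in\go(S)_+^\times-\{1\}$ from $\JJ_{\rm hyp}^0$ and the sum over $(t:n)_F$ from $\JJ_{\rm ell}^0$ recombine into one sum over $t\in\go$ with $t^2-4m\neq 0$. The product $L_{\fin}(s,\varepsilon_\Delta)B_\Delta^F(s)$, respectively $\zeta_{F,\fin}(s)B_{f^2}^F(s)$ when $\Delta=f^2$, coincides with Mizumoto's $L_F(s,t^2-4m)$ by \cite[Proposition 1]{Mizumoto}.

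The main obstacle is the archimedean identification at each $v\in\Sigma_\infty$: one must prove an identity of the shape
\[
\Gamma_\RR\!\left(\tfrac{z+1}{2}\right)\Ocal_v^{\sgn(\Delta_v),(z)}\!\left(\tfrac{t_v}{\sqrt{|\Delta_v|}}\right)=C_{k,m,v}(s)\,I_v(t,m,s)
\]
with an explicit prefactor $C_{k,m,v}(s)$. The left-hand side is expressed through the associated Legendre function $\fP_{(z-1)/2}^{1-k}$ (cf.\ \cite[\S 4.1]{MOS}), whereas the right-hand side is Mizumoto's Bessel integral. The identity follows by symmetrizing in $t\mapsto -t$ (which manufactures the cosine in Mizumoto's integrand) and then applying the classical Laplace-type integral representation expressing $(x^2-1)^{(k-1)/2}\fP_{(z-1)/2}^{1-k}(x)$ as a Bessel transform involving $J_{k-1}$; equivalently, one rewrites Zagier's $I_k(\Delta,t;s)+I_k(\Delta,-t;s)$ in the Bessel form used by Mizumoto. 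Once this identity is granted, the remaining bookkeeping of $\Gamma$-factors, powers of $\pi$ and $D_F$, and signs mirrors \eqref{hyperbolic of Takase}--\eqref{elliptic of Takase} specialized to parallel weight, and reproduces Mizumoto's formula.
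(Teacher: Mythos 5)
Your proposal is correct and follows essentially the same route as the paper: specialize the ST trace formula and reuse the Takase-appendix computations \eqref{cuspidal of Takase}, \eqref{unipotent z of Takase}, \eqref{unipotent -z of Takase}, \eqref{hyperbolic of Takase}, \eqref{elliptic of Takase} in parallel weight, reducing everything to the archimedean identity relating Zagier's $I_k(\Delta,\pm t;s)$ (equivalently $\Ocal_v^{\pm,(z)}$) to Mizumoto's Bessel integral $I_v(t,m,s)$, which is exactly the paper's Lemma \ref{Takase I = Mizumoto I}. The only divergence is in how that lemma is established: the paper compares hypergeometric evaluations of both sides (Zagier's Legendre-function formulas with \cite{MOS} and \cite{Erdelyi} against Mizumoto's own evaluation \cite[(4.3)]{Mizumoto}, treating $\Delta>0$ and $\Delta<0$ separately) rather than invoking a direct Bessel-transform representation of $\fP_{(z-1)/2}^{1-k}$, and the explicit constant there carries the factor $\mathfrak{d}_v^{1-s}$ (hence $D_F^{1-s}$), which your bookkeeping of powers of $D_F$ must absorb.
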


%

\begin{rem}{\rm 
		In the left-hand side of Mizumoto's formula, $D_F^{k-1/2}$ is used instead of $D_F^{-1/2}$ in
		\cite[(5.9)]{Mizumoto}. However,
		we need a modification of Mizumoto's original formula as in Theorem \ref{Mizumoto formula} above.
		Indeed,
		for the Fourier coefficient $a(\ga)$ of a Hilbert cusp form $f$ for an integral ideal $\ga$
		in Mizumoto \cite{Mizumoto},
Takase's quantity $C^*(f, \ga)$
is described as $a(\ga)\nr(\ga)^{1-k/2}D_F^{k/2}$ but not $a(\ga)\nr(\ga)^{1-k/2}$.
	}
\end{rem}

For recovering Mizumoto's formula from the ST trace formula, assume that $2 \in \QQ$ is completely splitting in $F$.
Let us consider the case of $l=(k,k,\ldots,k)$, $\fn=\go$,
$\a(\bfs) = \otimes_{v \in S}X_{n_v}(q_v^{-s_v/2}+q_v^{s_v/2})$
for $\ga=\prod_{v \in S}(\gp_v\cap\go)^{n_v}$ with $n_v\ge1$. and $s=\tfrac{z+1}{2}$ with $|\Re(2s-1)|<k-3$ for using the ST trace formula.

The left-hand side of Mizumoto's formula is equal to $\{\frac{\Gamma(k-1)}{(4\pi)^{k-1}}\}^{d_F}D_F^{-1/2}\nr(\ga)^{k/2-1} \times C_{\ga}(s)$, where $C_{\ga}(s)$ the quantity introduced as \eqref{C_a(s)} in Takase's formula.
Hence, it is enough to consider
\begin{align}\label{cuspidal of Mizumoto}& \left(\frac{\Gamma(k-1)}{(4\pi)^{k-1}}\right)^{d_F}D_F^{-1/2}\nr(\ga)^{k/2-1}\\
	& \times D_F^{1/2-z}(-1)^{\#S}\bigg\{\prod_{v \in \Sigma_{\infty}}\frac{k-1}{4\pi}\frac{2^{k+1}}{\Gamma_\RR(s+1)\Gamma_{\CC}(s+k-1)} \bigg\}\nr(\fa)^{1/2} \times \II_{\rm cusp}^0(\go|\a_\ga,z)\notag
\end{align}
to recover Mizumoto's formula from the ST trace formula.

\subsection{Unipotent terms of Mizumoto's formula}
By \eqref{unipotent z of Takase}, the unipotent term for $z$ is described as
\begin{align}\label{unipotent z of Mizumoto}
	& \left(\frac{\Gamma(k-1)}{(4\pi)^{k-1}}\right)^{d_F}D_F^{-1/2}\nr(\ga)^{k/2-1} \\
	& \times D_F^{1/2-z}(-1)^{\#S}\bigg\{\prod_{v \in \Sigma_{\infty}}\frac{k-1}{4\pi}\frac{2^{k+1}}{\Gamma_\RR(s+1)\Gamma_{\CC}(s+k-1)} \bigg\}\nr(\fa)^{1/2}\times D_F^{z/4} \JJ_{\rm unip}^0(\go|\a,z) \notag \\
	= & \zeta_{F,\fin}(2s)\nr(\ga)^{\frac{k-1-s}{2}}\delta(\ga=\square).\notag
\end{align}
Similarly, by \eqref{unipotent -z of Takase}, the unipotent term for $-z$ is described as
\begin{align}\label{unipotent -z of Mizumoto}
	& \left(\frac{\Gamma(k-1)}{(4\pi)^{k-1}}\right)^{d_F}D_F^{-1/2}\nr(\ga)^{k/2-1} \\
	& \times D_F^{1/2-z}(-1)^{\#S}\bigg\{\prod_{v \in \Sigma_{\infty}}\frac{k-1}{4\pi}\frac{2^{k+1}}{\Gamma_\RR(s+1)\Gamma_{\CC}(s+k-1)} \bigg\}\nr(\fa)^{1/2}\times D_F^{z/4} \JJ_{\rm unip}^0(\go|\a,-z) \notag \\
	=& D_F^{-s}\zeta_{F,\fin}(2s-1)\delta(\ga=\square)\nr(\ga)^{\frac{s+k-2}{2}}
	\times \bigg\{\prod_{v \in \Sigma_\infty}\frac{\Gamma(k-s)\Gamma(s-1/2)}{\Gamma(s+k-1)}\cos\left(\frac{\pi}{2}s\right)2^{2s-1} 2^s \pi^{s-1/2}\bigg\}.\notag
\end{align}
By virtue of the duplication formula and the formula
$\cos(\frac{\pi}{2}s)=(-1)^{k/2}\pi\Gamma(\frac{1-s+k}{2})^{-1}\Gamma(\frac{1-k+s}{2})^{-1}$
induced from the reflection formula, the last line above is described as
\[
D_F^{-s}\zeta_{F,\fin}(2s-1)\delta(\ga=\square)\nr(\ga)^{\frac{s+k-2}{2}}
\bigg\{\prod_{v \in \Sigma_\infty}\pi^{s-1/2}\frac{\Gamma(\tfrac{k-s}{2})\Gamma(s-1/2)}{\Gamma(\tfrac{s+k-1}{2})\Gamma(\tfrac{s+k}{2})\Gamma(\tfrac{1-k+s}{2})}(-1)^{k/2}(2\pi)2^{s-1}\bigg\}.
\]
Consequently, the unipotent terms of the ST trace formula
coincide with the first two terms in the right-hand side
of Mizumoto's formula.

\subsection{Hyperbolic and elliptic terms of Mizumoto's formula, and comparison}

The sum of the hyperbolic and the elliptic terms
of the ST trace formula is described as
\begin{align}\label{hyperbolic elliptic of Mizumoto}
	& \left(\frac{\Gamma(k-1)}{(4\pi)^{k-1}}\right)^{d_F}D_F^{-1/2}\nr(\ga)^{k/2-1} \times D_F^{1/2-z}(-1)^{\#S}\bigg\{\prod_{v \in \Sigma_{\infty}}\frac{k-1}{4\pi}\frac{2^{k+1}}{\Gamma_\RR(s+1)\Gamma_{\CC}(s+k-1)} \bigg\} \\
	& \times \nr(\fa)^{1/2} \{\JJ_{\rm hyp}^0(\go|\a,z)+
	\JJ_{\rm ell}^0(\go|\a,z)\} \notag \\
	= & \frac{1}{2}D_F^{-s} \nr(\ga)^{k-1} (-1)^{d_F k/2} \bigg\{\prod_{v \in \Sigma_{\infty}}(4\pi)^{s-1}2^{k}\frac{\Gamma(k)}{\Gamma(s+k-1)} \bigg\} \notag \\
	& \times \sum_{\substack{t \in \go \\ \Delta:=t^2-4m\neq 0}}\prod_{v \in \Sigma_{\infty}}\{I_{k}(t^2-4m, t, s)+I_{k}(t^2-4m, -t, s)\}L_{\fin}(s, \varepsilon_\Delta)B_{\Delta}^F(s) \notag
\end{align}
by virtue of \eqref{hyperbolic of Takase} and \eqref{elliptic of Takase}.
In order to transform this into the series over $t \in \go$
appearing in Mizumoto's formula, we have only to check the following.
\begin{lem}\label{Takase I = Mizumoto I}
Suppose $t \in \go$ and $\Delta=t^2-4m \neq 0$. Take any $v \in \Sigma_\infty$.
For $s \in \CC$ with $2-k<\Re(s)<k-1$, we have
\[I_v(t,m,s) = \mathfrak{d}_v^{1-s}m_v^{(k-1)/2}\frac{(4\pi)^{s-1}}{2\pi}\frac{2^k\Gamma(k)}{\Gamma(s+k-1)} \{I_{k}(\Delta_v,t_v,s) + I_{k}(\Delta_v, -t_v,s)\}.\]
\end{lem}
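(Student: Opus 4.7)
The plan is to expand both sides as explicit hypergeometric expressions and compare; the identity is essentially the Mellin--Bessel counterpart of Zagier's Legendre-type integral representation.

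First, I split $2\cos(2\pi t_v y/\mathfrak{d}_v)=e^{2\pi i t_v y/\mathfrak{d}_v}+e^{-2\pi i t_v y/\mathfrak{d}_v}$ and perform the substitution $u=4\pi\sqrt{m_v}\,y/\mathfrak{d}_v$. This transforms the defining integral into
\[ I_v(t,m,s)=\left(\frac{\mathfrak{d}_v}{4\pi\sqrt{m_v}}\right)^{1-s}\sum_{\varepsilon\in\{\pm 1\}}\int_0^\infty e^{i\varepsilon\alpha u}\,u^{-s}\,J_{k-1}(u)\,du, \qquad \alpha:=\frac{t_v}{2\sqrt{m_v}}, \]
so that $\Delta_v=4m_v(\alpha^2-1)$. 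The integrals on the right are classical oscillatory Mellin transforms of Bessel functions; their closed-form evaluation (via the Laplace--Bessel transform, or equivalently Gegenbauer's formula) produces a Gauss hypergeometric ${}_2F_1$ in the argument $\alpha^2$ when $|\alpha|<1$ (the regime $\Delta_v<0$) and in $\alpha^{-2}$ when $|\alpha|>1$ (the regime $\Delta_v>0$), together with explicit prefactors of gamma functions and powers of $2$.

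Second, from Zagier's definition one may write $I_k(\Delta_v,t_v;s)+I_k(\Delta_v,-t_v;s)$ as precisely the same hypergeometric ${}_2F_1$, with argument in the same variable; this is the manipulation used by Zagier to prove his Proposition 3 and it matches the Legendre function representation $\mathfrak{P}_{(z-1)/2}^{1-k}$ that governs $\Ocal_v^{\pm,(z)}$ in \S\ref{Trace formulas}. After identifying the common hypergeometric factor, the lemma reduces to an elementary identity between the remaining gamma and power prefactors.

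Third, that remaining identity is verified using the Legendre duplication formula $\Gamma(k)=2^{k-1}\pi^{-1/2}\Gamma(k/2)\Gamma((k+1)/2)$ (with the parity of $k$ removing sign ambiguities) and the reflection formula $\Gamma(s)\Gamma(1-s)=\pi/\sin(\pi s)$ (needed in the elliptic regime $\Delta_v<0$). Both sides are absolutely convergent on the strip $1/2<\Re(s)<k-1$, where one directly checks the equality; the claimed strip $2-k<\Re(s)<k-1$ then follows by meromorphic continuation, using that $I_v(t,m,s)$ was extended by definition and that the right-hand side is manifestly meromorphic.

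The main obstacle is the bookkeeping of constants across the two regimes $\Delta_v\lessgtr 0$, which produce a priori different-looking hypergeometric expressions and different sign conventions; one must verify that the identification leads to the same combined constant $\mathfrak{d}_v^{1-s}m_v^{(k-1)/2}(4\pi)^{s-1}2^{k-1}\Gamma(k)/\{\pi\,\Gamma(s+k-1)\}$ in both cases, with no stray phase from the branch of $(\alpha^2-1)^{1/2-k/2}$.
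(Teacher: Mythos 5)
Your plan is correct and its overall strategy coincides with the paper's: both reduce the two sides to the same Gauss hypergeometric function (in the argument $4m_v/(-\Delta_v)$ when $\Delta_v>0$ and $t_v^2/\Delta_v$ when $\Delta_v<0$) and then match the remaining gamma-factor and power prefactors via the duplication and reflection formulas, finishing by meromorphic continuation off a common strip of convergence. The one genuine difference is how the left-hand side is handled: the paper does not evaluate the Bessel--Mellin integral at all, but simply quotes Mizumoto's closed form (his (4.3)) for $I_v(t,m,s)$, and converts the right-hand side to the same ${}_2F_1$ through the Legendre-function expressions of Zagier (p.\ 134), the formula $\mathfrak{P}_{-s}^{1-k}$ from Magnus--Oberhettinger--Soni, and, in the case $\Delta_v<0$, Erd\'elyi's formulas for $\mathfrak{P}_{-s}^{1-k}(ia)$, with the branch of $\sqrt{z^2-1}$ fixed as in \cite[\S 9.2.1]{SugiyamaTsuzuki2018}. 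Your route instead re-derives Mizumoto's evaluation directly from the Weber--Schafheitlin/Gegenbauer-type Mellin transforms of $J_{k-1}$; this buys self-containedness (no appeal to Mizumoto's intermediate formula), but it transfers to you the burden the paper avoids, namely justifying the oscillatory integrals $\int_0^\infty e^{\pm i\alpha u}u^{-s}J_{k-1}(u)\,du$ in the regime $|\alpha|>1$ (only conditional convergence, continuation in $\alpha$ across $|\alpha|=1$, and the choice of branch of $(\alpha^2-1)^{(1-k)/2}$), precisely the bookkeeping you flag at the end; with that carried out carefully the two arguments yield the same constant, so the proposal stands.
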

\begin{proof}
We omit the subscript $v$ in the proof.
First let us consider the case $\Delta=t^2-4m>0$.
By \cite[p.134]{Zagier}, $I_k(\Delta,t,s) + I_k(\Delta,-t,s)$ equals
\[\bigg(\frac{\Delta}{4}\bigg)^{(s-k)/2} 2\cos\left(\frac{\pi}{2}(k-s)\right)
\frac{2^{1-k}\pi \Gamma(k-1+s)\Gamma(k-s)}{\Gamma(k)} 
\left(\frac{t^2}{\Delta}-1\right)^{-(k-1)/2}\mathfrak{P}_{-s}^{1-k}(\tfrac{|t|}{\sqrt{\Delta}})\]
when $1-k<\Re(s)<k$. Invoking the expression
\[\mathfrak{P}_{-s}^{1-k}(\tfrac{|t|}{\sqrt{\Delta}})
=\frac{2^{1-k}(\tfrac{t^2}{\Delta}-1)^{(k-1)/2}}{\Gamma(k)}
{}_2F_{1}(\tfrac{k-s}{2}, \tfrac{k+s-1}{2}; k; \tfrac{4m}{-\Delta})
\]
by \cite[p.156, 7]{MOS},
the right-hand side of the assertion equals
\begin{align*}
	2^{s} m^{(k-1)/2} \bigg(\frac{\pi}{\mathfrak{d}}\bigg)^{s-1}\Delta^{(s-k)/2}
\frac{\Gamma(k-s)}{\Gamma(k)}\cos\left(\frac{\pi}{2}(k-s)\right)
{}_2F_{1}(\tfrac{k-s}{2}, \tfrac{k+s-1}{2}; k ;\tfrac{4m}{-\Delta}).
\end{align*}
This coincides with $I(t, m, s)$ by \cite[(4.3)]{Mizumoto}.

Next let us consider the case $\Delta = t^2-4m<0$.
Noting the choice of a branch of  $\sqrt{z^2-1}$ as in
\cite[\S9.2.1]{SugiyamaTsuzuki2018},
the argument in \cite[p.134]{Zagier} leads us the identity
\begin{align*}I_k(\Delta,t,s) +I_{k}(\Delta, -t,s) 
= & \bigg(\frac{|\Delta|}{4}\bigg)^{(s-k)/2}
\frac{2^{1-k}\pi \Gamma(k-1+s)\Gamma(k-s)}{\Gamma(k)} \times
i^{-k+1}\\
& \times \bigg(\frac{t^2}{-\Delta}+1\bigg)^{-(k-1)/2}\sgn(t) \bigg(\mathfrak{P}_{-s}^{1-k}(\tfrac{it}{\sqrt{|\Delta|}})-\mathfrak{P}_{-s}^{1-k}(-\tfrac{it}{\sqrt{|\Delta|}})\bigg).
\end{align*}
By using \cite[p.126--127 (22)]{Erdelyi} and \cite[p.123, (11)]{Erdelyi},
$\mathfrak{P}_{-s}^{1-k}(ia)$ for $a \in (-1,1)-\{0\}$ is written as
\begin{align*}
\mathfrak{P}_{-s}^{1-k}(ia)= &
\frac{2^{1-k}\pi^{1/2}\sgn(a)i^{k-1}(a^2+1)^{(k-1)/2}}{\Gamma(\tfrac{k+s}{2})\Gamma(\tfrac{1-s+k}{2})}{}_2F_{1}(\tfrac{k-s}{2}, \tfrac{k+s-1}{2}; \tfrac{1}{2};(ia)^2)
\\
&-\frac{\pi^{1/2}2^{2-k}ia\, \sgn(a)i^{k-1}(a^2+1)^{(k-1)/2}}{\Gamma(\tfrac{k-s}{2})\Gamma(\tfrac{k+s-1}{2})}
{}_2F_{1}(\tfrac{k+s}{2}, \tfrac{k-s+1}{2}; \tfrac{3}{2};(ia)^2).
\end{align*}
By substituting $\pm\frac{t}{\sqrt{|\Delta|}}$ for $a$ in this formula when $0<|\frac{t}{\sqrt{|\Delta|}}|<1$, we obtain the identity
\begin{align*}I_k(\Delta,t,s) +I_{k}(\Delta, -t,s)
	=  2^{2-2s}|\Delta|^{(s-k)/2}\pi\frac{\Gamma(k-1+s)\Gamma(\tfrac{k-s}{2})}{\Gamma(k)\Gamma(\tfrac{k+s}{2})}{}_2F_{1}(\tfrac{k-s}{2}, \tfrac{k+s-1}{2}; \tfrac{1}{2}; \tfrac{t^2}{\Delta}).
\end{align*}
We remark that this equality is valid for general $\frac{t}{\sqrt{|\Delta|}}$ by analytic continuation.
As a result, the right-hand side of the assertion equals
\begin{align*}
|\Delta|^{(s-k)/2} m^{(k-1)/2} 2^{k-1}\left(\frac{\pi}{\mathfrak{d}}\right)^{s-1}\frac{\Gamma(\tfrac{k-s}{2})}{\Gamma(\frac{k+s}{2})}
{}_2F_{1}(\tfrac{k-s}{2}, \tfrac{k+s-1}{2}; \tfrac{1}{2}; \tfrac{t^2}{\Delta}).
\end{align*}
This coincides with $I(t,m,s)$ by \cite[(4.3)]{Mizumoto}. Hence we are done.
\end{proof}

Combining \eqref{cuspidal of Mizumoto}, \eqref{unipotent z of Mizumoto}, \eqref{unipotent -z of Mizumoto},
\eqref{hyperbolic elliptic of Mizumoto}, and Lemma \ref{Takase I = Mizumoto I},
the ST trace formula recovers Mizumoto's formula (Theorem \ref{Mizumoto formula}).

\section*{Acknowledgements}
The author would like to thank Masao Tsuzuki for suggestion, fruitful discussion
and careful reading of the early draft.
He also would like to thank Shota Inoue for fruitful discussion on partial summation.
Thanks are also due to Satoshi Wakatsuki for useful comments on low-lying zeros for Hecke-Maass forms.
The author was supported by
Grant-in-Aid for Young Scientists (20K14298).


\end{document}